\newcommand{\dt}{\partial_t}
\newcommand{\dx}{\partial_x}
\newcommand{\eps}{\varepsilon}
\newcommand{\jump}[1]{\llbracket#1\rrbracket}
\newcommand{\av}[1]{\langle#1\rangle}
\newcommand{\abs}[1]{\vert#1\vert}
\newcommand{\uq}{{\underline{q}}}
\newcommand{\uU}{{\underline{U}}}
\newcommand{\um}{{\underline{m}}}
\newcommand{\utheta}{{\underline{\theta}}}
\newcommand{\cD}{{\mathcal D}}
\newcommand{\cE}{{\mathcal E}}
\newcommand{\cG}{{\mathcal G}}
\newcommand{\cP}{{\mathcal P}}
\newcommand{\cR}{{\mathcal R}}
\newcommand{\cS}{{\mathcal S}}
\newcommand{\cT}{{\mathcal T}}
\newcommand{\dsp}{\displaystyle}
\newcommand{\mfm}{{\mathfrak m}}
\newcommand{\mfs}{{\mathfrak s}}
\newcommand{\mfe}{{\mathfrak e}}
\newcommand{\mfE}{{\mathfrak E}}
\newcommand{\RR}{{\mathbb R}}
\newcommand{\HH}{{\mathbb H}}
\newcommand{\BB}{{\mathbb B}}
\newcommand{\A}{{\mathbb A}}
\newtheorem{theorem}{Theorem}[section] 
\newtheorem{proposition}{Proposition}[section]
\newtheorem{lemma}{Lemma}[section]
\newtheorem{corollary}{Corollary}[section]
\newtheorem{notation}{Notation}
\newtheorem{remark}{Remark}[section]
\numberwithin{equation}{section}
\title[Waves Interacting with a Partially Immersed Obstacle]{Waves Interacting with a Partially Immersed Obstacle in the Boussinesq Regime}
\author{D. Bresch, D. Lannes, G. M\'etivier}
\address{Universit\'e Grenoble Alpes, Universit\'e Savoie Mont-Blanc et  CNRS UMR5127,
Batiment le chablais, 73376 Le Bourget du Lac Cedex, France}
\email{didier.bresch@univ-smb.fr}
\address{Institut de Math\'ematiques de Bordeaux\\ Universit\'e de Bordeaux et CNRS UMR 5251\\ 351 Cours de la Lib\'eration \\ 33405 Talence Cedex, France}
\email{David.Lannes@math.u-bordeaux.fr, Guy.Metivier@math.u-bordeaux.fr}
\thanks{D. B  is partially supported by the  ANR-18-CE40-0027 Singflows and the ANR project Fraise, and D. L is partially sypported by the ANR-18-CE40-0027 Singflows,  the Del Duca Fondation, the Conseil R\'egional d'Aquitaine 
and the ANR-17-CE40-0025 NABUCO.}
\begin{document}
\maketitle
\normalsize
\begin{small}
	\begin{center}
		{\bf Abstract}
	\end{center}
	This paper is devoted to the derivation and mathematical analysis of a wave-structure interaction problem which can be reduced to a transmission problem for a  Boussinesq system.  Initial boundary value 
problems and transmission problems in dimension d= 1 for $2\times2$ hyperbolic systems are well understood.  However, for many applications, and especially for the description of surface water waves, dispersive perturbations of hyperbolic systems must be considered. We consider here a configuration where the motion of the waves is governed by a Boussinesq system (a dispersive perturbation of the hyperbolic nonlinear shallow water equations), and in the presence of a fixed partially immersed obstacle. We shall insist on the differences and similarities with respect to the standard hyperbolic case,  and focus our attention on a new phenomenon, namely, the apparition of a dispersive boundary layer.  In order to obtain existence and uniform bounds on the solutions over the relevant time scale, a control of this dispersive boundary layer and of the oscillations in time it generates is necessary. This analysis leads to a new notion of compatibility condition that is shown to coincide with the standard hyperbolic compatibility conditions when the dispersive parameter is set to zero.   To the authors' knowledge, this is the first time that these phenomena (likely to play a central role in the analysis of initial boundary value problems for dispersive perturbations of hyperbolic systems) are exhibited.
	
	\bigskip
\noindent{\bf Keywords:} Wave-structure interaction, Boussinesq system, Free surface, Transmission problem,   Local well posedness, Dispersive boundary layer, Oscillations in time, Compatibility conditions. 
\end{small}

\section{Introduction}

\subsection{General setting}

 Free surface problems for various non-linear PDEs such as incompressible Euler and Navier-Stokes equations or reduced long-wave systems such as  the nonlinear shallow water equations, the Boussinesq and Serre-Green-Nagdhi  equations, have been strongly studied 
over the last decade: well-posedness, rigorous justification of asymptotic models, numerical simulations, etc.
 Recently, free surface interactions with floating or fixed structures have been addressed for instance in \cite{Lannes17} where a new formulation of the water-waves problem was proposed in order to take into account the presence of a floating body. In this formulation, the pressure exerted by the fluid on the partially immersed body appears as the Lagrange multiplier associated to the constraint that under the floating object, the surface of the water coincides with the bottom of the object. This method was also implemented in \cite{Lannes17} when the full water waves equations are replaced by simpler reduced asymptotic models such as the nonlinear shallow-water or Boussinesq equations. The resulting wave-structure models have been investigated mathematically when the fluid model is the nonlinear shallow water equations in the case of vertical lateral walls (\cite{Lannes17} in the one dimensional case and \cite{Bocchi} for  two-dimensional, radially symmetric, configurations) as well as in the more delicate case of non vertical walls where the free dynamics of the contact points must be investigated \cite{IL}. An  extension to the viscous nonlinear shallow water equations has also been recently derived and studied in \cite{Tucsnaketal}. On the numerical side, a discrete implementation of the above method was proposed in \cite{Lannes17} while a relaxation method was implemented in \cite{GPSMW1,GPSMW2} for one dimensional shallow water equations with a floating object and a roof. Note finally that the resulting equations present an incompressible-compressible structure as in congestion phenomena; the interested reader is referred to \cite{Perrin, Degond, BEG, PerrinZatorska, BreschRenardy} for instance.
 
 Mathematically speaking, the interactions of an immersed object with waves described by the one-dimensional shallow water equations can be reduced to an hyperbolic (possibly free boundary) transmission problem for which a general theory has been developed in the wake of the study of the stability of shock waves \cite{Majda,Metivier,BenzoniSerre} (see also \cite{IL} for a more specific general theory of one-dimensional free-boundary hyperbolic problems). If we want to consider more precise models for the propagations of the waves (e.g. Boussinesq or Serre-Green-Naghdi equations), the situation becomes more intricate because dispersive effects must be included and there is no general theory for transmission problems or even initial boundary value problems associated to such models. This is the situation we address in this paper where we consider waves described by a (dispersive) Boussinesq system interacting with a fixed partially immersed object with vertical lateral walls. The fact that the lateral walls are vertical simplify the analysis since the horizontal coordinates of the contact points between the object and the surface of the water are time independent. However, the discontinuity of the surface parametrization at the contact points makes the derivation of the model more complicated. This derivation is postponed to Section \ref{sectderiv} where we show that the wave-structure interaction problem under consideration can be reduced to the following transmission problem where $\zeta$ is the surface elevation above the rest state, $q$ the horizontal discharge, and where the bottom of the object is assumed to be the graph of a function $\zeta_{\rm w}$ above the interval $(-R,R)$ (see Figure \ref{fig}).
 \begin{figure}[h]
\begin{center}
\includegraphics[width=0.9\linewidth]{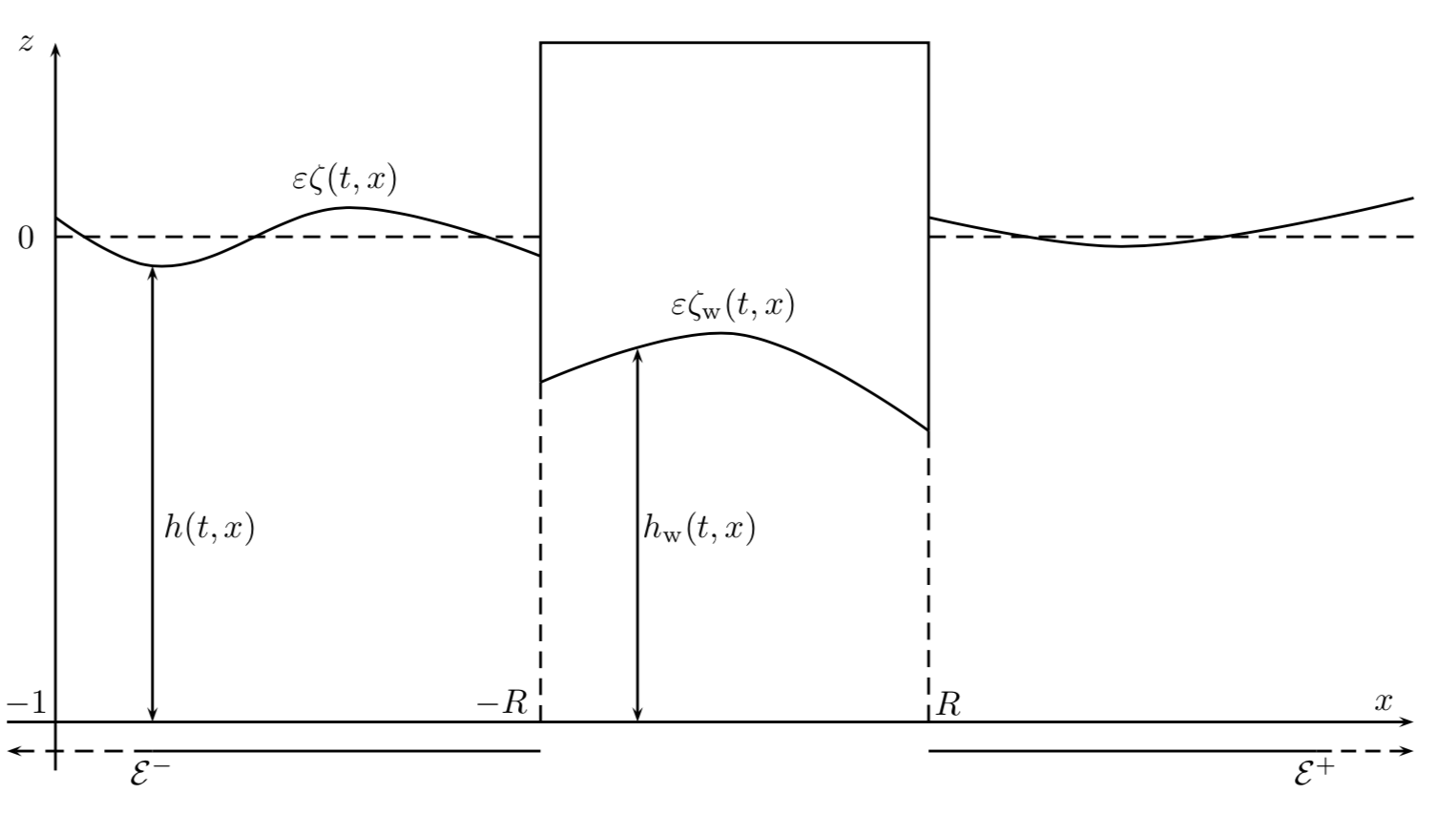}
\end{center}
\caption{A partially immersed obstacle}\label{fig}
\end{figure}
  In dimensionless form, this system reads
 \begin{equation}\label{3Bouss1_intro}
\begin{cases}
\dt \zeta +\dx q =0,\\
(1-\frac{1}{3}\mu \dx^2)\dt q +\eps \dx (q^2)+h \dx \zeta=0
\end{cases}
\quad \mbox{ on }\quad (-\infty,-R)\cup (R,+\infty);
\end{equation}
with $h=1+\eps \zeta$ and transmission conditions
\begin{align}\label{jump1_intro}
\jump{q}&=0,\\
\label{jump2_intro}
-\frac{\mu}{3}\dt \jump{\dx q}+\jump{\zeta+\varepsilon \frac{1}{2}\zeta^2}&=-\alpha \frac{d}{dt}\av{q},
\end{align}
where $\jump{q}$ and $\av{q}$ are defined as 
$$
\jump{q}=q(R)-q(-R)
\quad\mbox{ and }\quad
\av{q}=\frac{1}{2}\big( q(R)+q(-R)\big),
$$
and $ \alpha =\int_{-R}^R  1/h_{\rm w}$ where $h_{\rm w} = 1+\varepsilon \zeta_{\rm w}$. 
The system is completed by the initial condition
\begin{equation}\label{ICtransm_intro}
(\zeta,q)_{\vert_{t=0}}=(\zeta^{\rm in},q^{\rm in})
\end{equation}
where $(\zeta^{\rm in}, q^{\rm in})$ is given.
In this system $\eps$ is the so called amplitude parameter (the ratio of the typical amplitude of the waves over the depth at rest) and $\mu$ the shallowness parameter (the square of the ratio of the depth over the typical horizontal length). In the absence of floating object and in the Boussinesq regime (i.e. when $\mu \ll 1$ and $\eps \sim \mu$) the equations \eqref{3Bouss1_intro} are known to furnish a good approximation to the water waves equations \cite{Lannes_B}  for times of order $O(1/\eps)$. 

Our main objective here is to prove the local in time  well-posedness of the transmission problem \eqref{3Bouss1_intro}--\eqref{ICtransm_intro} on the same  $O(1/\eps)$ time scale. To our knowledge, this is the first time that such a result is proved for a dispersive perturbation of a hyperbolic system. Indeed, the theory of initial boundary value problems for hyperbolic systems has been intensely investigated \cite{Majda,Metivier,BenzoniSerre}, but even in the one-dimensional case, there are very few results for dispersive perturbation of such systems, despite their ubiquitous nature : Boussinesq systems for water waves \cite{Lannes_B}, internal waves \cite{BLS},  Euler-Korteweg system for liquid-vapour mixtures \cite{BDD}, elastic structures \cite{KT}, etc. At best, one can find local existence results for initial boundary value problems but on an existence time which shrinks to zero as the dispersive parameter $\mu$ goes to zero \cite{ADM,BonaChen,Xue,LW}, falling far below the relevant $O(1/\eps)$ time scale.

 In order to reach this time scale, it is necessary to analyze and control the \emph{dispersive boundary layer} that can be created at the boundary. In the particular case of the transmission problem \eqref{3Bouss1_intro}--\eqref{ICtransm_intro}, it is easy to construct local in time solutions (see Proposition \ref{prop1} below). It is striking that these solutions are smooth if the data are smooth \emph{without} having to impose compatibility conditions. This is in strong contrast with the hyperbolic case where it is well known that compatibility conditions of order $n-1$ are needed to obtain $H^n$ regularity. In our case, the dispersion automatically smoothes the solution by creating a \emph{dispersive boundary layer} that compensates the possible discontinuities of the derivatives at the corner $x=t=0$. In this boundary layer of typical size $O(\sqrt{\mu})$, the solution can behave quite wildly and standard techniques are unable to provide the necessary bounds to obtain an existence time independent of $\eps$ and $\mu$ (and a fortiori of order $O(1/\eps)$). One therefore needs to analyze precisely the dispersive boundary layer. By doing so, it is possible to derive a new kind of compatibility conditions (that are estimates rather than equations) that need to be imposed to control the dispersive boundary layer. Interestingly enough, these new conditions degenerate to the standard hyperbolic compatibility conditions when the dispersion parameter is set to zero. We believe that our approach, based on the analysis of the dispersive boundary layer, is of general interest and should play a central role in the yet to develop theory of initial boundary value problems for dispersive perturbations of hyperbolic systems.

\subsection{Organization of the paper}

The outline of the article is as follows:  In Section \ref{sectderiv}, we derive  the transmission problem \eqref{3Bouss1_intro}-\eqref{ICtransm_intro} from basic physical assumptions. In Section \ref{SMain} we perform a change of variables that linearizes  the transmission conditions and we rewrite the problem as an ODE which
is locally well-posed with a blow-up criterion related to a maximal existence time $T^*$ which may
depend of the small parameters $\varepsilon$ and $\delta$. The main objective is then to solve the system on  the relevant $O(1/\eps)$ time scale   showing 
a uniform bound of the quantity appearing in the blow-up criterion. Our main result states that this is the case provided that some compatibility conditions are satisfied. Due to the presence of dispersive terms in the equations, these compatibility conditions differ from the standard hyperbolic compatibility conditions; they are analyzed in details in Section \ref{ID}. We show in particular that if these new compatibility conditions are nonlocal, they can be approximated and replaced by local compatibility conditions that are easier to check. Section \ref{estimates} is then dedicated
to the uniform estimates of the time derivatives which rely on $L^2$ estimates for the linearized system and control of the commutators using modified Gagliardo-Nirenberg estimates in time and space that take into account the singularity of the multiplicative constants when the time interval is small.  Section \ref{dxD} is then dedicated to
estimates of $x-$derivatives. As in the hyperbolic case, we use the equations, but this is now much trickier. Because of the dispersive term, instead of getting explicit expressions $\partial_x^k\theta = \dots,$ in terms of time derivatives and lower order $x-$derivatives, we are led to solve equations of the form $(1+ \delta^2 \partial_t^2) \partial_x^k \theta = \dots$ and thus we have to control the rapid oscillations created by $(1+\delta^2\partial_t^2)$.  Finally, Section \ref{final} provides the proof of the main result, namely, the proof of Theorem \ref{theonew}, which is based on  the proof of a uniform bound of the quantity appearing in the blow-up criterion.
     
\subsection{Notations}\label{sectnota}
- Throughout this paper, we use the following notations for the jump and average of a function $f$ across the floating object,
$$
\jump{f}=f(R)-f(-R)
\quad\mbox{ and }\quad
\av{f}=\frac{1}{2}\big( f(R)+f(-R)\big),
$$
- We also denote
$$
\abs{x}_R=\begin{cases}
x-R & \mbox{ if } x>R,\\
-x-R& \mbox{ if } x<-R
\end{cases}.
$$
- We denote by $\cE=\cE^-\cup \cE^+$ the fluid domain, where
$$
\cE^-=(-\infty,-R)
\quad\mbox{ and }\quad
\cE^+=(R,\infty).
$$
- We denote $\HH=H^1(\cE)\times H^2(\cE)$, and more generally $\HH^n=H^{n+1}(\cE)\times H^{n+2}(\cE)$ for all $n\in {\mathbb N}$.\\
- We denote respectively by $R_0$ and $R_1$ the inverse of $(1-\delta^2 \dx^2)$ on $\cE$ with homogeneous Dirichlet and Neumann boundary data at $x=\pm$, that is, $R_0 f=u$ and $R_1 f=v$ with
$$
\begin{cases}
(1-\delta^2\dx^2) u= f,& \mbox{on }\cE\\
u_{\vert_{x=\pm R}}=0 & 
\end{cases}
\quad\mbox{ and }\quad
\begin{cases}
(1-\delta^2\dx^2) v= f,& \mbox{on }\cE\\
\dx v_{\vert_{x=\pm R}}=0 & 
\end{cases}.
$$
\section{Derivation of the model} \label{sectderiv}

\subsection{Basic equations}
We consider a wave-structure interaction problem consisting in describing the motion of waves at the surface of a one dimensional canal with a fixed floating obstacle. More precisely, we consider a shallow water configuration in which the waves are described, in dimensionless form, by the following Boussinesq system
\begin{equation}\label{2Bouss_base}
\begin{cases}
\dt \zeta +\dx q =0,\\
(1-\frac{1}{3}\mu \dx^2)\dt q +\eps \dx (q^2)+h \dx \zeta=- h \dx \underline{P},
\end{cases}
\end{equation}
where $\zeta$ is the surface elevation above the rest state, $h=1+\eps \zeta$ is the water depth, $q$ is the horizontal discharge (that is, the vertical integral of the horizontal component of the velocity field in the fluid domain), and $\underline{P}$ is the value of the pressure at the surface. The parameters $\eps$ and $\mu$ are respectively called the nonlinear and shallowness parameters and defined as
$$
\eps=\frac{a}{H_0} \quad\mbox{ and }\quad \mu=\frac{H_0^2}{L^2}
$$
where $a$ is the typical amplitude of the waves, $H_0$ the depth at rest, and $L$ the typical horizontal scale. The weakly nonlinear regime in which the Boussinesq system is valid (see \cite{Lannes_B} for instance for the derivation and justification of this Boussinesq model) is characterized by the relation
\begin{equation}\label{weakNL}
\eps\sim \mu.
\end{equation}

In this paper, we consider a fixed, partially immersed, object with vertical lateral walls located at $x=\pm R$ and assume that the bottom of the object can be parameterized by a function $\zeta_{\rm w}$ on $(-R,R)$. We shall refer to ${\mathcal I}=(-R,R)$ as the {\it interior} domain and to ${\mathcal E}=(-\infty,-R)\cup (R,\infty)$ as the {\it exterior}. The surface pressure is assumed to be given by the atmospheric pressure $P_{\rm atm}$ in the exterior domain, and by the (unknown) {\it interior pressure} $\underline{P}_{\rm i}$ on ${\mathcal I}$,
\begin{equation}\label{const1}
\underline{P}=P_{\rm atm}\quad \mbox{ on }\quad {\mathcal E}
\quad\mbox{ and }\quad
\underline{P}=\underline{P}_{\rm i}\quad \mbox{ on }\quad {\mathcal I}.
\end{equation}
The pressure is therefore constrained on ${\mathcal E}$ but not on ${\mathcal I}$, while this is the reverse for the surface elevation for which we impose
\begin{equation}\label{const2}
\zeta(t,x)=\zeta_{\rm w} (x)\quad \mbox{ on }\quad {\mathcal I},
\end{equation}
that is, the surface of the water coincides with the (fixed) bottom of the obstacle on ${\mathcal I}$.

Finally, transmission conditions are provided at the {\it contact points} $x=\pm R$ on the discharge and conservation of total energy is imposed,
\begin{align}\label{contact1}
q \mbox{ is continuous at }x=\pm R,\\
\label{contact2}
\mbox{There is conservation of energy for the wave-structure system};
\end{align}
the latter condition is made more precise in the next section, where we show that it yields a jump condition on the interior pressure that allows to close the equations. 

\subsection{Derivation of a jump condition for the interior pressure from energy conservation}

There are two different local conservation laws for the energy, one in the exterior region, and another one for the interior region.
\begin{itemize}
\item {\it  Local energy conservation in the exterior region}.
For the Boussinesq model \eqref{2Bouss_base} in the exterior region (i.e. with $-h\dx \underline{P}=0$), there is a local conservation of energy,
\begin{equation}\label{consNRJ}
\dt {\mathfrak e}_{\rm ext}+\dx {\mathfrak F}_{\rm ext}=0
\end{equation}
with
$$
{\mathfrak e}_{\rm _{ext}}=\frac{1}{2}\zeta^2+\frac{\varepsilon}{6}\zeta^3 +\frac{1}{2}q^2+\frac{\mu}{6}(\dx q)^2
\quad\mbox{ and }\quad
{\mathfrak F}_{\rm ext}=q\big[\zeta+\varepsilon \frac{2}{3}q^2+\varepsilon\frac{1}{2}\zeta^2-\frac{\mu}{3}\dx\dt q\big].
$$
\item {\it Local energy conservation in the interior region}.
Let us first remark that from the first equation of \eqref{2Bouss_base} and \eqref{const2}, one gets that $\dx q=0$ in the interior region. There exists therefore a function of time only $q_{\rm i}$ such that
$$
q(t,x)=q_{\rm i}(t)\quad\mbox{ on }\quad {\mathcal I}.
$$
The local conservation of energy reads
\begin{equation}\label{consNRJint}
\dt {\mathfrak e}_{\rm int}+\dx {\mathfrak F}_{\rm int}=0.
\end{equation}
with
$$
{\mathfrak e}_{\rm int}=\frac{1}{2}\zeta_{\rm w}^2 +\frac{1}{2h_{\rm w}}q_{\rm i}^2
\quad\mbox{ and }\quad
{\mathfrak F}_{\rm int}=q_{\rm i}\big[\zeta_{\rm w}+\underline{P}_{\rm i}\big]
$$
(recall that $\dx q_{\rm i}=0$) 
\end{itemize}
Since the object is fixed, the condition \eqref{contact2} is equivalent to saying that the total energy $E_{\rm tot}$ of the fluid should be constant, where
$$
E_{\rm tot}=\int_{\abs{x}<R}{\mathfrak e}_{\rm int}+\int_{\abs{x}>R}{\mathfrak e}_{\rm ext}.
$$
Time differentiating and using \eqref{consNRJ} and \eqref{consNRJint}, we impose therefore that
$$
0=-\jump{{\mathfrak F}_{\rm int}}+\jump{{\mathfrak F}_{\rm ext}}
$$
and using the continuity condition  \eqref{contact1} on $q$, this yields the following jump condition for the interior pressure,
\begin{equation}\label{condPi}
\jump{\zeta_{\rm w}+\underline{P}_{\rm i}}=\jump{\zeta+\varepsilon \frac{1}{2}\zeta^2-\frac{\mu}{3}\dx\dt q};
\end{equation}
setting $\mu=0$ in this relation, one recovers as expected the transmission condition obtained in \cite{Tucsnaketal} and \cite{Bocchi} for the nonlinear shallow water equations.

\subsection{Reformulation as a transmission problem}

We show in this section that the wave-structure interaction problem under consideration can be reduced to  the Boussinesq system \eqref{2Bouss_base} on the exterior domain ${\mathcal E}=(-\infty,-R)\cup (R,+\infty)$,
$$
\begin{cases}
\dt \zeta +\dx q =0,\\
(1-\frac{1}{3}\mu \dx^2)\dt q +\eps \dx (q^2)+h \dx \zeta=0
\end{cases}
\quad \mbox{ on }\quad {\mathcal E}
$$
together with transmission conditions relating the values of $\zeta$, $q$ (or derivatives of these quantities) at the contact points $x=\pm R$.
As noticed in the previous section, one has $\dx q=0$ in the interior region and $q(t,x)=q_{\rm i} (t)$  
on ${\mathcal I}$ for some function $q_{\rm i}$ depending only on time. From this and the continuity condition \eqref{contact1}, we infer
$$
\jump{q}=0 \quad \mbox{ and }\quad q_{\rm i}=\av{q},
$$
while the second equation of \eqref{2Bouss_base} implies that 
$$
\frac{1}{h_{\rm w}}\frac{d}{dt} \av{q}=-\dx \big( \underline{P}_{\rm i}+\zeta_{\rm w}\big).
$$
Integrating this relation on $(-R,R)$ gives  therefore
$$
\alpha \frac{d}{dt} \av{q}=-\jump{ \underline{P}_{\rm i}+\zeta_{\rm w}}\quad\mbox{ with }\quad
\alpha=\int_{-R}^R \frac{1}{h_{\rm w}}.
$$
Combining this with \eqref{condPi} provides a second transmission condition,
$$
\alpha \frac{d}{dt} \av{q} =-\jump{\zeta+\varepsilon \frac{1}{2}\zeta^2-\frac{\mu}{3}\dx\dt q}.
$$
We have therefore reduced the problem to the following transmission problem :
\begin{equation}\label{3Bouss1}
\begin{cases}
\dt \zeta +\dx q =0,\\
(1-\frac{1}{3}\mu \dx^2)\dt q +\eps \dx (q^2)+h \dx \zeta=0
\end{cases}
\quad \mbox{ on }\quad (-\infty,-R)\cup (R,+\infty);
\end{equation}
with transmission conditions
\begin{align}\label{jump1}
\jump{q}&=0,\\
\label{jump2}
-\frac{\mu}{3}\frac{d}{dt} \jump{\dx q}+\jump{\zeta+\varepsilon \frac{1}{2}\zeta^2}&=-\alpha \frac{d}{dt} \av{q},
\end{align}
where $\jump{\cdot}$ and $\av{\cdot}$ are defined as in  \S \ref{sectnota} and
$\alpha =\int_{-R}^R {1}/{h_{\rm w}}$ where $h_{\rm w} = 1+ \varepsilon \zeta_{\rm w}$. The system is completed by the initial condition
\begin{equation}\label{ICtransm}
(\zeta,q)_{\vert_{t=0}}=(\zeta^{\rm in},q^{\rm in}).
\end{equation}
The rest of this paper is devoted to the mathematical analysis of this transmission problem. Note that a similar problem without the dispersive terms was considered in \cite{Lannes17} in the $1D$ case,  in \cite{Bocchi} in the $2D$-radial case and in \cite{Tucsnaketal} with viscosity. As we shall see, the situation here is drastically different as the dispersive boundary layer plays a central role and requires the development of new techniques.

\section{Statement of the main result and sketch of the proof}
\label{SMain}

\subsection{Linearization of the transmission conditions}
Note that the limiting problem $\delta = 0$ is a hyperbolic transmission  problem, 
and thus we expect to recover the main features of such problems. In particular, the initial data must satisfy {\sl compatibility conditions } at the corners to get smooth solutions bounded on a uniform interval. But instead of being equations, these conditions are transformed into estimates, see below.

We also follow the general strategy of hyperbolic problems. The main part of the proof consists in proving uniform a priori estimates for smooth solutions. By construction, the system has a positive definite energy, providing good $L^2$-types estimates. The next step is to look for estimates for tangential  derivatives, that is here, time derivatives.  Then, one get estimates for the $\dx$ derivatives, using the equation. The system for the time derivative is much nicer when the boundary conditions are made linear, and this is why we introduce the new unknown  
$\theta$ (instead of the elevation $\zeta$) given by
$$
\theta=\zeta+\eps\frac{1}{2}\zeta^2
\quad \mbox{ or equivalently }
\zeta=\theta+\eps c(\theta)
\quad \mbox{ with }\quad
c(\theta)=-\frac{2\theta^2}{(1+\sqrt{1+2\eps\theta})^2}.
$$
Note that the equivalence (written above) comes from the fact that we consider uniformly bounded quantities with respect
to $\varepsilon$. Rewriting  the problem in terms of $\theta$ and $q$, one get
the following nonlinear system
\begin{equation}\label{Boussthetanl}
\begin{cases}
(1+\eps c'(\theta))\dt \theta+ \dx q=0,\\
[1- \delta^2\dx^2] \dt q+\varepsilon\dx (q^2) +\dx \theta=0
\end{cases}
\quad \mbox{ on }\quad {\mathcal E}
\end{equation}
with the \emph{linear}  transmission conditions
\begin{align}\label{Transs1}
\jump{q}&=0, \\
\label{Transs2}
-\delta^2\dt \jump{\dx q}+\jump{\theta}&=-\alpha \frac{d}{dt} \av{q}
\end{align}
and  the initial condition
\begin{equation}\label{initial1}
(\theta,q)\vert_{t=0} = (\theta^{\rm in}, q^{\rm in}),
\end{equation}
where $\theta^{in}= \zeta^{in} + \varepsilon \,(\zeta^{in})^2$. The main objective of
the paper is to prove  the existence and uniqueness of solutions of the above system
written in $(\theta,q)$  on a time interval $[0,T]$  such that $(\varepsilon+\delta^2)T$ 
is small enough under some compatibility conditions on the data. More precisely we
will prove Theorem \ref{theonew}.


\subsection{Reduction to an ODE}

 From now on, we restrict our attention to this new system \eqref{Boussthetanl}-\eqref{initial1}.  As a preliminary remark, we note that for $\delta > 0$, this system can be seen as an o.d.e. in a suitable Hilbert space. Introduce 
$R_0$ the inverse of $(1 - \delta^2 \dx^2)$ with Dirichlet boundary conditions on each side of $\mathcal E$. 
First note  that,  for initial data which satisfy $\jump{ q^{\rm in} } = 0$, the jump condition 
$ \jump{q} = 0$ is equivalent to $ \jump { \dt q} =0  $. Remark next that 
the second equation together with the jump condition $\jump { \dt q} =  0$  is equivalent to 
\begin{equation}\label{altq}
\dt q     =  -  R_0  \Gamma    + \sigma  e ^{ -  \frac{1}{\delta} \abs{x}_R } 
\end{equation}
with $\Gamma = \dx( \theta + \eps q^2)$, and necessarily 
$$\sigma = \frac{d}{dt}\av{q}.$$ 
This implies that 
$$
\delta^2 \dt \jump{\dx q} = - \delta^2 \jump{ \dx R_0 \Gamma } - 2 \delta \frac{d}{dt}\av{q} . 
$$ 
and therefore the second transmission condition \eqref{jump2} with $\theta= \zeta+ \varepsilon\zeta^2/2$ is equivalent to 
  $$
   - \delta^2 \jump{ \dx R_0 \Gamma } - 2 \delta \frac{d}{dt} \av{q} = \jump{\theta} +  \alpha \frac{d}{dt} \av{q}
  $$
that is
\begin{equation}
\label{dtqi}
\frac{d}{dt} \av{q} = -  \frac{1}{\alpha + 2 \delta} \Big( \delta^2 \jump{ \dx R_0 \Gamma  }  + \jump{ \theta } \Big). 
\end{equation} 
\begin{remark}
The fact that $\frac{d}{dt} \av{q}$, which is the coefficient of the exponentially decaying term in \eqref{altq}, is given explicitly in terms of $q$ and $\theta$ is crucial for the ODE formulation of Proposition \ref{lemNLEDO}.
\end{remark}
Hence we have proved the following result, where we recall that $\HH=H^1 ( \mathcal E) \times H^2  ( \mathcal E)$.

\begin{proposition}\label{lemNLEDO}
For  $(\theta, q) \in C^1 ([0, T] ; \HH)$ such  that 
 $\inf_{[0,T]\times \cE}\{1+\eps c'(\theta)\}>0$ and 
$\jump{ q_{ | t = 0} } = 0$, 
the system \eqref{Boussthetanl}--\eqref{Transs2}  is equivalent to
\begin{equation}
\label{nlode}
\dt   U    =  \mathcal L (U)    := 
\\
    \left(\begin{array}{c}
\dsp - \Phi  \\
\dsp -  \cR \big( \Gamma , \jump{\theta} \big) 
\end{array}\right).
\end{equation} 
with 
\begin{equation}
\label{nlrhs}
\Phi =  \frac{1}{1+\eps c'(\theta)}\dx q, \qquad \Gamma = \dx (\theta + \eps q^2 ). 
\end{equation}
and
\begin{equation}
\label{defCR}
\cR (\Gamma, \rho)  = R_0 \Gamma + 
\frac{1}{\alpha+2\delta}\big(\delta^2 \jump{\dx R_0  \Gamma }+  \rho \big)
e ^{ -  \frac{1}{\delta} \abs{x}_R }. 
\end{equation} 
\end{proposition}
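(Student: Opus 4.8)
The plan is to prove the equivalence in both directions by exploiting the fact that, at each fixed time, the second line of \eqref{Boussthetanl} is an elliptic (ordinary differential) equation in $x$ for $\dt q$ whose solution is pinned down by the transmission conditions. The first line requires no work: since $\inf_{[0,T]\times\cE}\{1+\eps c'(\theta)\}>0$, it is equivalent to $\dt\theta=-\Phi$ with $\Phi$ as in \eqref{nlrhs}, which is exactly the first component of \eqref{nlode}. All the content therefore lies in the second equation together with \eqref{Transs1}--\eqref{Transs2}.

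First I would treat the forward implication. Assuming $(\theta,q)$ solves \eqref{Boussthetanl}--\eqref{Transs2}, I rewrite the second equation as $(1-\delta^2\dx^2)\dt q=-\Gamma$ on $\cE$, with $\Gamma=\dx(\theta+\eps q^2)$. Since $(\theta,q)\in C^1([0,T];\HH)$ gives $\Gamma\in C^0([0,T];L^2(\cE))$ and $\dt q\in C^0([0,T];H^2(\cE))$, the function $w:=\dt q+R_0\Gamma$ satisfies $(1-\delta^2\dx^2)w=0$ and lies in $H^2$ on each half-line; the only such functions decaying at $\pm\infty$ are the multiples of $e^{-\frac{1}{\delta}\abs{x}_R}$ on each component, so $\dt q=-R_0\Gamma+\sigma^\pm e^{-\frac{1}{\delta}\abs{x}_R}$ for scalars $\sigma^\pm(t)$. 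This is the decomposition \eqref{altq}, and it is the structural heart of the argument: the dispersive term forces a boundary-layer profile $e^{-\frac{1}{\delta}\abs{x}_R}$ next to each contact point. As $R_0\Gamma$ vanishes and its $x$-derivative has a well-defined trace at $x=\pm R$, evaluating gives $\dt q(\pm R)=\sigma^\pm$.

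Next I would use the two transmission conditions to determine $\sigma^\pm$. Since $\jump{q_{|t=0}}=0$, the condition $\jump{q}=0$ for all $t$ is equivalent to $\jump{\dt q}=0$; together with $\dt q(\pm R)=\sigma^\pm$ this forces $\sigma^+=\sigma^-=:\sigma$, and averaging gives $\sigma=\av{\dt q}=\frac{d}{dt}\av{q}$. Differentiating the decomposition in $x$ and using $\jump{\dx e^{-\frac{1}{\delta}\abs{x}_R}}=-2/\delta$ yields $\delta^2\dt\jump{\dx q}=-\delta^2\jump{\dx R_0\Gamma}-2\delta\frac{d}{dt}\av{q}$; inserting this into \eqref{Transs2} produces the explicit formula \eqref{dtqi} for $\frac{d}{dt}\av{q}$, hence for $\sigma$. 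Substituting $\sigma$ back and comparing with \eqref{defCR} gives $\dt q=-\cR(\Gamma,\jump{\theta})$, the second component of \eqref{nlode}.

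For the converse I would reverse these computations. Applying $(1-\delta^2\dx^2)$ to the second component of \eqref{nlode} and using $(1-\delta^2\dx^2)R_0\Gamma=\Gamma$ together with $(1-\delta^2\dx^2)e^{-\frac{1}{\delta}\abs{x}_R}=0$ on $\cE$ recovers the second equation of \eqref{Boussthetanl}. Evaluating $\dt q=-\cR(\Gamma,\jump{\theta})$ at $x=\pm R$, where the $R_0\Gamma$ traces vanish and the exponential equals $1$, shows $\dt q(R)=\dt q(-R)$, so $\jump{\dt q}=0$; combined with the hypothesis $\jump{q_{|t=0}}=0$ this gives $\jump{q}=0$, i.e.\ \eqref{Transs1}. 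The same evaluation identifies $\frac{d}{dt}\av{q}$ with the coefficient in \eqref{dtqi}, and retracing the algebra above reproduces \eqref{Transs2}. I expect the only genuinely delicate point to be the justification of the representation \eqref{altq}, namely that the ansatz $-R_0\Gamma+\sigma^\pm e^{-\frac{1}{\delta}\abs{x}_R}$ exhausts all admissible solutions of the elliptic problem in the class $H^2(\cE)$; everything else is careful sign bookkeeping and linear algebra on the two scalars $\sigma^\pm$ and the traces at the contact points.
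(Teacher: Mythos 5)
Your proof is correct and follows essentially the same route as the paper: the text preceding the proposition derives exactly the decomposition \eqref{altq}, uses $\jump{q^{\rm in}}=0$ to convert $\jump{q}=0$ into $\jump{\dt q}=0$, computes $\delta^2\dt\jump{\dx q}=-\delta^2\jump{\dx R_0\Gamma}-2\delta\frac{d}{dt}\av{q}$, and solves for $\frac{d}{dt}\av{q}$ to obtain \eqref{dtqi} and hence $\cR$. Your only addition is to spell out why the homogeneous solutions on each half-line are exactly the decaying exponentials and why the two coefficients $\sigma^\pm$ coincide, which the paper leaves implicit.
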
 
The smoothing properties of 
$R_0$ imply that the mapping $\mathcal L$   is smooth from $\HH^n=H^{n+1} (\mathcal E)\times H^{n+2}(\mathcal E)$  to itself  for $n \ge 0$, so that the system \eqref{nlode} can be solved in this space, on an interval of time which depends on 
$\delta$, see Proposition~\ref{prop1} below.

\begin{remark} \textup{In sharp contrast, the situation when 
$\delta = 0 $ is quite different. In this case, the equations can be written 
\begin{equation}
\label{nlhyp}
\dt  U = \mathcal L_0 (U)    
:=  \left(\begin{array}{c}
\dsp -\frac{1}{1+\eps c'(\theta)}\dx q \\
\dsp -   \dx  ( \theta + \eps q^2 ) 
\end{array}\right)
\end{equation} 
but  $\mathcal L_0$  is not anymore continuous on a fixed Sobolev space. More importantly, the boundary conditions 
\begin{equation}
\label{hypbc}  
\jump{q} = 0, \qquad \jump{\theta} = - \alpha \dt \av{q}
\end{equation}
are \emph{not} propagated by the equations and, as usual for hyperbolic problems,  the initial data  must satisfy compatibility conditions to generate smooth solutions. This shows that \eqref{nlode}  is a truly singular perturbation of \eqref{nlhyp}, not only because one passes from bounded to unbounded operators, but also because of the boundary conditions,  which are included in \eqref{nlode} and not in \eqref{nlhyp}. }
\end{remark}

\medbreak
 
The discussion above  shows that the problem is reasonably well posed, but does not answer our objective : our goal  is to solve  \eqref{Boussthetanl}-\eqref{initial1} on an interval of time  independent of  $\delta$ and
$\eps$, and even more, of size $O(1/\eps)$. The first step is to use Cauchy Lipschitz theorem to prove local existence and derive a blow up criterion. We recall that $\HH^n  
= H^{n+1} (\mathcal E)\times H^{n+2}(\mathcal E)$.
\begin{proposition} 
\label{prop1} For $n \ge 0$, consider  initial data $(\theta^{\rm in}, q^{\rm in}) \in \HH^n $  satisfying $\jump{q^{\rm in} } = 0$ and 
 $\inf  \{ 1+\eps c'(\theta^{\rm in} )\}>0$. Then for all $\eps \in [0,1]$ and
  $\delta > 0$, there is $T > 0$ such that 
the system \eqref{Boussthetanl}--\eqref{initial1} has a unique solution in $C^1([0,T[ ; \HH^n)$, which in addition belongs to $C^\infty([0,T[ ; \HH^n)$. Moreover, if $T^*$ denotes the maximal existence time and $T^*<\infty$, one has
\begin{equation}
\label{maxT}
\lim _{ T \to TÅ¡*}   \big\| \theta , q, \dx q,  1/ (1+ \eps c'(\theta) )\big\|_{L^\infty ([0, T] \times \mathcal E)}  = + \infty . 
\end{equation}

\end{proposition}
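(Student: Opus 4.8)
The plan is to invoke the Cauchy--Lipschitz (Picard--Lindel\"of) theorem on the Banach space $\HH^n$, using the ODE reformulation furnished by Proposition~\ref{lemNLEDO}. First I would verify that the vector field $\cL$ defined in \eqref{nlode}--\eqref{defCR} is locally Lipschitz (indeed smooth) as a map $\HH^n \to \HH^n$ on the open set
\begin{equation*}
\cU_n = \big\{ (\theta,q) \in \HH^n : \inf_{\cE}\{1+\eps c'(\theta)\} > 0 \big\}.
\end{equation*}
This is where the smoothing properties of $R_0$ are essential: since $R_0$ gains two derivatives, $R_0\Gamma \in H^{n+2}(\cE)$ whenever $\Gamma = \dx(\theta+\eps q^2) \in H^{n+1}(\cE)$, so the second component of $\cL$ lands in $H^{n+2}(\cE)$ as required; the jump functional $\jump{\dx R_0\Gamma}$ is well defined by trace theory and the exponential profile $e^{-\frac1\delta\abs{x}_R}$ is a fixed $\HH^n$ element for $\delta>0$. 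The first component $-\Phi = -(1+\eps c'(\theta))^{-1}\dx q$ maps $\cU_n$ into $H^{n+1}(\cE)$ because $H^{n+1}$ is a Banach algebra (for $n\ge 0$) stable under composition with the smooth function $c'$ and under inversion away from zero, and $\dx q \in H^{n+1}(\cE)$.

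Granting local Lipschitz continuity, Cauchy--Lipschitz yields a unique maximal solution $U=(\theta,q)\in C^1([0,T^*[;\HH^n)$. The $C^\infty$ regularity in time follows by a bootstrap: since $\cL$ is smooth and $U\in C^1$, the equation $\dt U = \cL(U)$ shows $\dt U \in C^1$, hence $U\in C^2$, and iterating gives $U\in C^\infty([0,T^*[;\HH^n)$. By Proposition~\ref{lemNLEDO}, this ODE solution coincides with a genuine solution of the PDE transmission problem \eqref{Boussthetanl}--\eqref{initial1}, the hypothesis $\jump{q^{\rm in}}=0$ guaranteeing that the equivalence there applies and that the jump condition is propagated.

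For the blow-up criterion \eqref{maxT}, I would argue by contrapositive: suppose the right-hand quantity stays bounded, say $\|\theta,q,\dx q, 1/(1+\eps c'(\theta))\|_{L^\infty([0,T]\times\cE)} \le M$ for all $T<T^*$. The standard ODE blow-up alternative states that if $T^*<\infty$ then $\|U(t)\|_{\HH^n}\to\infty$ as $t\to T^*$, or $U(t)$ approaches the boundary $\partial\cU_n$. The control of $1/(1+\eps c'(\theta))$ keeps the solution away from $\partial\cU_n$, so it remains to rule out $\HH^n$-norm blow-up. This is the main obstacle and requires an a priori differential inequality: differentiating $\|U\|_{\HH^n}^2$ along the flow and estimating $\|\cL(U)\|_{\HH^n}$ in terms of $\|U\|_{\HH^n}$ and the $L^\infty$ quantities, one obtains $\frac{d}{dt}\|U\|_{\HH^n} \le C(M)\|U\|_{\HH^n}$; here the quasilinear structure matters, since the top-order terms $\dx^{n+1}\theta$ and $\dx^{n+2}q$ must be shown to enter $\cL$ with coefficients controlled only by the low-order $L^\infty$ bounds (the factor $(1+\eps c'(\theta))^{-1}$ and the products in $\Gamma$), with the genuinely highest derivatives either smoothed by $R_0$ or handled by commutator estimates. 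Gr\"onwall's inequality then forces $\|U(t)\|_{\HH^n}$ to stay finite up to $T^*$, contradicting blow-up; hence the $L^\infty$ quantity in \eqref{maxT} must diverge.
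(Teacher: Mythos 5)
Your proposal is correct and follows essentially the same route as the paper: the ODE reformulation of Proposition~\ref{lemNLEDO}, Cauchy--Lipschitz on $\HH^n$ using the smoothing of $R_0$, a bootstrap for $C^\infty$ regularity in time, and a tame estimate $\|\cL(U)\|_{\HH^n}\le C(\mfm_0(U))\|U\|_{\HH^n}$ combined with Gr\"onwall to obtain the blow-up criterion. (Two minor points: $\Gamma=\dx(\theta+\eps q^2)$ lies in $H^{n}(\cE)$ rather than $H^{n+1}(\cE)$ since $\theta\in H^{n+1}(\cE)$, which still gives $R_0\Gamma\in H^{n+2}(\cE)$; and no commutator estimates are actually needed here, the Moser-type composition bound $\|f(u)\|_{H^k}\le C(\|u\|_{L^\infty})\|u\|_{H^k}$ used in the paper already yields the required tame bound.)
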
 

\begin{proof}
Let $\mathcal O$ denote the open subset of $\HH$ of the $U = (\theta, q)$ such that 
 $\inf  \{ 1+\eps c'(\theta )\}>0$.  Then  $\Phi (U) = \dx q / (1+\eps c'(\theta))$ is a smooth mapping from
 $\mathcal O$ to  $H^{n+1}  (\mathcal  E)$ and 
 $  \Gamma (U) = \dx (\theta + \eps q^2 )$ is a smooth mapping from $\HH$ to $H^n(\mathcal E)$. For $\delta > 0$, $R_0 $ maps $H^n(\mathcal E)$ to $H^{n+2} (\mathcal E)$, so that the right hand side $\mathcal L (U)$ of \eqref{nlode} 
is a smooth mapping from $\mathcal O$ to $\HH$ and the local existence of solutions of \eqref{nlode} 
$U \in C^\infty ([0, T) ; \mathcal O)$ follows for $T>0$ small enough. 
   Next,  when $f$ is smooth with $f(0) = 0$,  one has 
\begin{equation}
\big\| f(u) \big\|_{H^k(\mathcal E)}  \le C( \| u \|_{L^\infty} )  \big\| u  \big\|_{H^k(\mathcal E)} 
\end{equation}
where $C(.)$ is a continuous function on $\RR$. 
This implies that 
for $U \in \mathcal O$ one has 
\begin{equation*}
\big\| \Phi (U) \big\|_{H^{n+1} (\mathcal E)}  \le C (\mfm_0 (U) )  \big\|  U \big\|_{\HH^n} , \quad
\big\| \Gamma (U) \big\|_{H^{n} (\mathcal E)}  \le C (\mfm_0 (U) )  \big\|  U \big\|_{\HH^n} ,
\end{equation*}
and therefore
\begin{equation*}
\big\| \mathcal L (U) \big\|_{\HH^n}  \le C (\mfm_0 (U) )  \big\|  U \big\|_{\HH^n} ,
\end{equation*}
 where 
 \begin{equation}
\label{mfm0}
\mfm_0 (U) =  \big\| \theta ,  q, \dx q, ,  1/ (1+ \eps c'(\theta) )\big\|_{L^\infty (\mathcal E)}. 
\end{equation} 
and  $C (\mfm)$  depends only on $\mfm$. Thus the solution  satisfies
 $$\displaystyle
  \big\|  U (t)  \big\|_{\HH^n}  \le  e^{ \int_0^t  \mfm_0(U)(s) ds }\  \big\|  U (0)  \big\|_{\HH^n} 
 $$
 implying that it  be continued as long as 
 $\mfm (U(t))$ remains bounded. The second part of the proposition follows. 
\end{proof}

\subsection{Compatibility conditions}

Proposition \ref{prop1} shows that in order to prove existence on a time interval $[0,T]$ (with $T$ independent of $\delta $ and a fortiori of size $O(1/\eps)$), it is sufficient to prove a priori estimates which imply that $\mfm_0 (U(t))$ remains uniformly bounded on $[0, T] $.  For that, we follow the lines of the analysis of hyperbolic equations, based on energy estimates. The $L^2$-type estimates for $U = (\theta, q)$ are easy, and differentiating the equations in time, one obtains uniform estimates  for $U_j  = (\theta_j , q_j) =  (\dt^j \theta, \dt^j q)$ in terms of ${U_j}_{\vert_{t=0}}$, provided that the commutator terms can be controlled -- see Proposition \ref{Estdtj} below. However, such a control is useless if one cannot prove that the energy of the initial value of the time derivatives, namely, ${U_j}_{\vert_{t=0}}$, is uniformly controlled in terms of standard Sobolev norms of the initial data $U_{\vert_{t=0}}$. This is the issue addressed in this section.

\medbreak

We therefore seek to give conditions which ensure uniform bounds for the initial values $U^{\rm in}_j$  of the $U_j$.  By  \eqref{nlode}, one can compute them inductively. Namely one has,  when 
$\delta > 0$, 
\begin{equation}
\label{inducid}
\left\{\begin{aligned}
&  
\theta_{j+1}^{\rm in} =  - \Phi^{\rm in}_{j}  \\
& q_{j+1}^{\rm in}  = -  \cR \big( \Gamma_j^{\rm in}, \jump{\theta_{j}^{\rm in} }\big)
\end {aligned}  \right.
\end{equation} 
where 
\begin{equation}
\cR (\Gamma_j^{\rm in}, \jump{\theta_{j}^{\rm in} })  = R_0 \Gamma_j^{\rm in} + 
\frac{1}{\alpha+2\delta}\big(\delta^2 \jump{\dx R_0  \Gamma_j^{\rm in} }+ \jump{\theta_{j}^{\rm in}}   \big)
e ^{ -  \frac{1}{\delta} \abs{x}_R }
\end{equation} 
and
\begin{equation}
\label{PhiGamma} 
\Phi_j^{\rm in}=  \frac{1}{1+\eps c'(\theta_j^{\rm in})}\dx q_j^{\rm in} , 
        \qquad \Gamma_j^{\rm in}  =  \dx  ( \theta_{j}^{\rm in}  + \eps (q_j^{\rm in})^2 ),
\end{equation}
  using systematically the notations $f_j = \dt^j $, $f^{\rm in} = f_{| t = 0} $, $f_j^{\rm in} = \dt^j f _{| t = 0} $. 
Indeed,  
  $\Phi^{\rm in}_{j} $  and $ \Gamma_{j}^{\rm in}$ are non linear functions 
  of $(\theta_k^{\rm in}, q_k^{\rm in})$   $(\dx \theta_k^{\rm in}, \dx q_k^{\rm in})$ for $k \le j$, so that
\eqref{inducid} defines inductively $U^{\rm in}_j $ for all $j$ in $ \HH^n$ if 
$U^{\rm in}_0 \in \HH^n$. 
  
The difficulty is that the relations \eqref{inducid} do not provide a uniform control (with respect to $\delta$) of the space derivatives $\dx^k U_{j+1}^{\rm in}$ in terms of space derivatives of the $U_j$. Indeed, it follows from \eqref{inducid} and the definition \eqref{defCR} of $\cR$ that
$$
\dx^k q_{j+1}^{\rm in}=- \dx^k R_0 \Gamma_j^{\rm in}- \frac{1}{\alpha+2\delta}(\delta^2\jump{\dx R_0\Gamma_j}+\jump{\theta_j})\frac{d^k}{dx^k}\big(e^{-\frac{1}{\delta}\abs{x}_R}\big),
$$
and it appears that both terms in the right-hand-side are of size $O(\delta^{-k})$ (see \S \ref{sect41} for details). The only way one can expect a uniform control of $\dx^k q_{j+1}^{\rm in}$ is that these two singular terms cancel one another. This is the case provided that the following {\it compatibility conditions} are satisfied, for some $M>0$ and all $\delta\in (0,1]$,
\begin{equation}
\label{cc1new} 
\begin{cases}
\big\lvert \jump{q_{j+1}^{\rm in}} \big\rvert  \le M \delta^{n-j-1/2} \\
\big\lvert  \alpha \av{q_{j+1}^{\rm in}} +\jump{\theta_{j}^{\rm in}} -\delta^2 \jump{\dx q_{j+1}^{\rm in}} \big\rvert   \le M \delta^{n-j-1/2} 
\end{cases}
 \qquad  \mathrm{for} \ \  0 \le j \le n -1
\end{equation}
(roughly speaking, this means that the transmission conditions \eqref{Transs1} and \eqref{Transs1} are approximately satisfied by the $U_j^{\rm in}$ up to $j=n-1$).   
\begin{remark} \textup{Note that the $(\theta_{j,k} , q_{j,k} )$  are given by nonlinear functionals 
of $U^{\rm in}=(\theta_0^{\rm in}, q_0^{\rm in})$ involving $R_0$ and space derivatives, of total order at most $j$. Therefore the conditions above are assumptions bearing only  on the initials data $U^{\rm in}$.  }
\end{remark}

Under such conditions, it is possible to control the $U_j^{\rm in}$ in Sobolev spaces, as shown in the following proposition whose proof is postponed to Section \ref{ID} for the sake of clarity.
\begin{proposition} 
\label{estID}
Given $n\in {\mathbb N}$ and $M>0$, there is a constant $C$ such that  
for all initial data $(\theta_0^{in}, q_0^{in}) \in \HH^n$ and parameters 
$\eps$ in $[0, 1]$  and $\delta \in (0,1]$ satisfying \begin{equation}
\label{H0}
\jump{q_0^{\rm in}}=0, \quad \| \theta_0^{\rm in} \|_{H^{n+1} (\mathcal E)}  \le M ,
 \quad \|  ( q_0^{\rm in}, \delta \dx q_0^{\rm in} )  \|_{H^{n+1} (\mathcal E)}  \le M , 
\end{equation}
\begin{equation}\label{H1}
 1+\eps c'(\theta_0^{\rm in }) \geq M^{-1} ,
\end{equation}
and the conditions \eqref{cc1new} for $j < n$,    one has  
 \begin{equation}
 \label{estid}
 \big\| ( \theta_j^{\rm in},  q_j^{\rm in}, \delta \dx q_j^{\rm in})   \big\|_{H^{ n+1- j} (\mathcal E)} \le C 
 \qquad for \ \  0 \le j \le n +1. 
 \end{equation}
\end{proposition}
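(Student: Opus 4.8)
The proof proceeds by induction on $j$, the base case $j=0$ being the hypotheses \eqref{H0}--\eqref{H1}. Assuming \eqref{estid} for all indices $\le j$ (with a constant depending only on $n$ and $M$), I use the recursion \eqref{inducid} to reach level $j+1$. The component $\theta_{j+1}^{\rm in}=-\Phi_j^{\rm in}$ involves no occurrence of $R_0$; expanding $\Phi_j^{\rm in}=\dt^j\big(\dx q/(1+\eps c'(\theta))\big)\vert_{t=0}$ by the Leibniz rule and applying the tame product and composition estimates already used in the proof of Proposition \ref{prop1} (in particular $\|f(u)\|_{H^k}\le C(\|u\|_{L^\infty})\|u\|_{H^k}$ for $f(0)=0$), its leading term $\dx q_j^{\rm in}/(1+\eps c'(\theta_0^{\rm in}))$ is bounded in $H^{n-j}$ by $\|q_j^{\rm in}\|_{H^{n+1-j}}\le C$, while all other terms carry strictly lower indices. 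The same estimates bound $\Gamma_j^{\rm in}=\dx(\theta_j^{\rm in}+\eps(q_j^{\rm in})^2)$ in $H^{n-j}(\cE)$.

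The core is the component $q_{j+1}^{\rm in}=-\cR(\Gamma_j^{\rm in},\jump{\theta_j^{\rm in}})$. By \eqref{defCR} it solves, on each component of $\cE$, the elliptic problem $(1-\delta^2\dx^2)q_{j+1}^{\rm in}=-\Gamma_j^{\rm in}$ with boundary value $q_{j+1}^{\rm in}\vert_{x=\pm R}=-\beta_j$, where $\beta_j=\frac{1}{\alpha+2\delta}\big(\delta^2\jump{\dx R_0\Gamma_j^{\rm in}}+\jump{\theta_j^{\rm in}}\big)$. I write $q_{j+1}^{\rm in}=v_{\rm reg}+v_{\rm bl}$, taking $v_{\rm reg}$ to be $-(1-\delta^2\dx^2)^{-1}$ applied, componentwise, to a bounded extension of $\Gamma_j^{\rm in}$ to $\RR$: since the Fourier multipliers $(1+\delta^2\xi^2)^{-1}$ and $\delta\xi(1+\delta^2\xi^2)^{-1}$ are bounded uniformly in $\delta$, one gets $\|v_{\rm reg}\|_{H^{n-j}}+\|\delta\dx v_{\rm reg}\|_{H^{n-j}}\le C\|\Gamma_j^{\rm in}\|_{H^{n-j}}\le C$ with no loss of powers of $\delta$. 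The remainder $v_{\rm bl}$ solves the homogeneous equation on each component and is therefore a pure exponential layer, $v_{\rm bl}=a^{\pm}e^{-\abs{x}_R/\delta}$ on $\cE^{\pm}$, whose amplitudes are $a^{\pm}=-\beta_j-v_{\rm reg}(\pm R)$.

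Everything now reduces to estimating the amplitudes $a^{\pm}$, and this is exactly where the compatibility conditions enter: since $\|\dx^k e^{-\abs{x}_R/\delta}\|_{L^2}\sim\delta^{1/2-k}$, a layer contributes $O(1)$ to the unweighted $H^{n-j}$ norm precisely when $|a^{\pm}|\le C\delta^{(n-j)-1/2}$. One checks that, modulo contributions already bounded uniformly, the amplitudes $a^{\pm}$ are the quantities controlled by \eqref{cc1new} at index $j$; these conditions bound them by $M\delta^{n-j-1/2}$, so that $\|v_{\rm bl}\|_{H^{n-j}}\le C$ and hence $\|q_{j+1}^{\rm in}\|_{H^{n-j}}\le C$. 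The weighted quantity $\delta\dx q_{j+1}^{\rm in}$ is handled identically: each $\delta$-weighted derivative of a layer costs $\delta\cdot\delta^{-1}=1$, so $\|\delta\dx v_{\rm bl}\|_{H^{n-j}}$ obeys the same bound. Finally, the last step $j=n$ (which produces $q_{n+1}^{\rm in}$, required only in $H^0$) needs no condition, because $\|e^{-\abs{x}_R/\delta}\|_{L^2}\sim\delta^{1/2}$ already renders an amplitude of size $O(1)$ harmless; this is why \eqref{cc1new} is imposed only for $j\le n-1$.

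The main obstacle is the identification claimed in the previous paragraph: one must prove, uniformly in $\delta\in(0,1]$, that the boundary-layer amplitudes $a^{\pm}$ of $q_{j+1}^{\rm in}$---which combine the $O(1)$ layers intrinsically generated by $R_0\Gamma_j^{\rm in}$ to enforce its Dirichlet condition with the explicit profile weighted by $\beta_j$---reduce, after cancellation of their $O(\delta^{-k})$ parts, to the quantities estimated in \eqref{cc1new}. A subsidiary difficulty feeding into this is that the source $\Gamma_j^{\rm in}$ is itself built from the lower-order $q_k^{\rm in}$, which carry their own boundary layers, so one must verify that differentiating the nonlinearity produces no negative power of $\delta$ beyond those absorbed by the compatibility conditions. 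Once this boundary-layer bookkeeping is in place, the induction closes and \eqref{estid} holds for all $0\le j\le n+1$.
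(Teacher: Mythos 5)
Your overall strategy is the right one, and it mirrors the paper's: induct on $j$, bound $\theta_{j+1}^{\rm in}=-\Phi_j^{\rm in}$ by product/composition estimates, and split $q_{j+1}^{\rm in}=-\cR(\Gamma_j^{\rm in},\jump{\theta_j^{\rm in}})$ into a part that is uniformly bounded in $H^{n-j}$ plus exponential layers $a^\pm e^{-\abs{x}_R/\delta}$ whose amplitudes must satisfy $\abs{a^\pm}\lesssim\delta^{n-j-1/2}$; your remark that the last step $j=n$ needs no condition because $\|e^{-\abs{x}_R/\delta}\|_{L^2}\sim\delta^{1/2}$ also matches the paper's separate treatment of $j=n+1$. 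The paper realizes the same splitting not by extension to $\RR$ and a Fourier multiplier but by the commutation identities $\dx R_0 f=R_1\dx f\pm\delta^{-1}f_{\vert x=\pm R}e^{-\abs{x}_R/\delta}$ and $\dx R_1 f=R_0\dx f$ on $\cE$ itself, which yield the explicit formula $\dx^kR_0f=R_\iota\dx^kf-(\mp1)^k\delta^{-k}(\cD_k^\pm f)e^{-\abs{x}_R/\delta}$ with $\cD_k^\pm f=\sum_{l<k/2}(\delta\dx)^{2l}f_{\vert x=\pm R}$; either realization of the ``regular part'' would do.

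However, there is a genuine gap, and you have located it yourself: the assertion that ``one checks that'' the amplitudes $a^\pm=-\beta_j-v_{\rm reg}(\pm R)$ coincide, modulo errors of size $\delta^{n-j-1/2}\|\Gamma_j^{\rm in}\|_{H^{n-j}}$, with the trace quantities $\jump{q_{j+1}^{\rm in}}$ and $\alpha\av{q_{j+1}^{\rm in}}+\jump{\theta_j^{\rm in}}-\delta^2\jump{\dx q_{j+1}^{\rm in}}$ of \eqref{cc1new} is precisely the mathematical content of the proposition, and it is not a routine verification. The difficulty is that $\beta_j$ contains the nonlocal term $\delta^2\jump{\dx R_0\Gamma_j^{\rm in}}$ and $v_{\rm reg}(\pm R)$ is a nonlocal functional of (an extension of) $\Gamma_j^{\rm in}$, whereas \eqref{cc1new} is phrased in terms of traces of $q_{j+1}^{\rm in}$ itself. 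Converting one into the other, \emph{uniformly in} $\delta$ and with the correct error exponent, requires the Taylor expansions of $R_0$ and $R_1$ at the boundary: in the paper this is the identity $R_1f_{\vert x=\pm R}=I^\pm(f)=\sum_{l<k}(\pm\delta\dx)^lf_{\vert x=\pm R}+(\pm\delta)^kI^\pm(\dx^kf)$ together with its consequences (Corollary \ref{deriveop}, the estimate \eqref{est219} replacing $\delta^2\jump{\dx R_0 f}$ by $2\delta\av{\cS_kf}$, and Propositions \ref{propb1}--\ref{propTaylor} passing from the intrinsic quantities $A=\jump{\cD_kf}$, $B=\alpha\av{\cD_kf}-2\delta\av{\cP_kf}-\rho$ to the trace quantities $\widetilde A=\jump{q}$, $\widetilde B$ of Corollary \ref{corob1}). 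Without this chain of identities your induction does not close, because you cannot yet invoke \eqref{cc1new} to bound $a^\pm$. Your subsidiary worry about the lower-order layers hidden in $\Gamma_j^{\rm in}$, on the other hand, is already resolved by the induction hypothesis, which controls the \emph{unweighted} norms $\|q_k^{\rm in}\|_{H^{n+1-k}}$ and hence $\|\Gamma_j^{\rm in}\|_{H^{n-j}}$ by the multiplicative Sobolev estimates; no extra bookkeeping is needed there.
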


%
%
%
%
%

\subsection{The main theorem}

This being settled, our main result is the following. We recall that $\HH^n=H^{n+1}(\cE)\times H^{n+2}(\cE)$.
\begin{theorem} \label{theonew}
 Let $n\ge 5$. Given $M>0$, there is $\tau>0$ such that   for all initial data $(\theta_0^{\rm in}, q_0^{\rm in}) \in \HH^n $ and parameters  $\eps \in [0,1]$ and $\delta \in (0, 1]$  satisfying \eqref{H0} and \eqref{H1}, and the compatibility 
conditions \eqref{cc1new},  there is a unique solution    $U=(\theta,q) \in {\mathcal C}^1([0,T]; \HH^n)$ of \eqref{Boussthetanl}--\eqref{initial1},   with 
 $ T =  \tau / (\varepsilon+\delta^2)$.
\end{theorem}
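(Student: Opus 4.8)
The plan is to prove Theorem \ref{theonew} by a continuation argument based on the blow-up criterion \eqref{maxT} of Proposition \ref{prop1}. Since $\eps\in[0,1]$ and $\delta>0$ are fixed, that proposition already gives a smooth solution on a maximal interval $[0,T^*)$, so it suffices to establish an a priori bound guaranteeing that $\mfm_0(U(t))$ stays finite on $[0,\tau/(\eps+\delta^2)]$; this forces $T^*>\tau/(\eps+\delta^2)$. The strategy mirrors the hyperbolic theory: control the tangential (here, time) derivatives by energy estimates, recover the normal (here, $x$) derivatives from the equations, and finally bound $\mfm_0$ by Sobolev embedding, which is why $n\ge 5$ is required so that the full $\HH^n$ norm controls $\theta,q,\dx q$ in $L^\infty$. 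The whole point is to make every constant uniform in $\delta\in(0,1]$ and to extract the gain $\eps+\delta^2$ that produces the long time scale.

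First I would set up the natural energy. The system \eqref{Boussthetanl} carries a positive definite energy of the form $\frac12\int_\cE\big(\theta^2+q^2+\delta^2(\dx q)^2\big)+\frac\alpha2\av{q}^2$, up to $\eps$-corrections from the cubic terms and the quasilinear coefficient $1+\eps c'(\theta)$; the linear transmission conditions \eqref{Transs1}--\eqref{Transs2} are designed precisely so that the boundary contributions in the time derivative of this energy assemble into the total derivative of the boundary term $\frac\alpha2\av{q}^2$. This yields the base $L^2$-type estimate for $U=(\theta,q)$. Differentiating the equations $j$ times in $t$ for $0\le j\le n$, the functions $U_j=(\dt^j\theta,\dt^j q)$ solve the same linear system with commutator source terms, and I would run the same energy identity on each $U_j$, following Proposition \ref{Estdtj}. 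The initial energy of the $U_j$ is controlled by Proposition \ref{estID} exactly under the compatibility conditions \eqref{cc1new}; this is where those conditions enter, keeping $\|(\theta_j^{\rm in},q_j^{\rm in},\delta\dx q_j^{\rm in})\|_{H^{n+1-j}}$ bounded uniformly in $\delta$.

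The core obstacle is twofold. The first part is handling the commutators generated by time differentiation: because the energy constants degenerate as the time interval shrinks, they must be estimated with the modified Gagliardo--Nirenberg inequalities in time and space announced in Section \ref{estimates}, which track the $T$-singular constants while keeping the nonlinear contributions proportional to $\eps$. The second, and I expect harder, part is the passage from time to $x$-derivatives carried out in Section \ref{dxD}. In contrast with the hyperbolic case, the second equation of \eqref{Boussthetanl} does not yield $\dx^k\theta$ algebraically; inverting the dispersive operator forces one to solve, for each $k$, an equation of the form $(1+\delta^2\dt^2)\dx^k\theta=(\text{controlled terms})$, whose solution carries rapid oscillations in time of frequency $1/\delta$ coming from the dispersive boundary layer of size $O(\sqrt\mu)$. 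The crux is to show that these oscillations do not destroy the uniform-in-$\delta$ bounds, which is also the reason the transmission conditions were first linearized through the change of unknown $\theta=\zeta+\frac\eps2\zeta^2$.

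Finally I would close the bootstrap. Combining the tangential estimates with the recovered $x$-derivative estimates gives a Gronwall inequality for $\|U(t)\|_{\HH^n}$ whose growth rate is $O(\eps+\delta^2)$ times a continuous function of $\mfm_0(U)$. Starting from the initial bound of Proposition \ref{estID} and using the $n\ge5$ Sobolev embedding to control $\mfm_0(U(t))$ by $\|U(t)\|_{\HH^n}$, a standard continuity argument shows that, for $\tau$ small enough, on $[0,T]$ with $T=\tau/(\eps+\delta^2)$ the norm stays within twice its initial size, so $\mfm_0(U(t))$ remains finite. By the blow-up criterion \eqref{maxT} this gives $T^*>T$ and hence the asserted solution in $C^1([0,T];\HH^n)$; uniqueness follows from the same $L^2$ energy estimate applied to the difference of two solutions.
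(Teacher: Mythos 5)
Your proposal follows essentially the same route as the paper: local existence and the blow-up criterion from the ODE formulation (Proposition \ref{prop1}), uniform control of the initial time derivatives via the compatibility conditions (Proposition \ref{estID}), energy estimates for $\dt^j U$ with commutators handled by the $T$-singular Gagliardo--Nirenberg inequalities (Proposition \ref{Estdtj}), recovery of $x$-derivatives through the oscillatory equation $(1+\delta^2\dt^2)\dx^k\theta=\dots$ (Section \ref{dxD}), and a final bootstrap closed by Sobolev embedding with $n\ge 5$. The only cosmetic difference is that the paper obtains uniqueness directly from the Cauchy--Lipschitz formulation rather than from an $L^2$ estimate on the difference of two solutions, but both arguments are valid.
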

\begin{remark}\textup{
Recalling that $\delta^2=\frac{1}{3}\mu$ and the assumption \eqref{weakNL} of weak nonlinearity, namely, $\eps \sim \mu$, the theorem provides an existence time which is $O(\frac{1}{\eps+\delta^2})=O(\frac{1}{\eps})$ which is the same as for the initial value (Cauchy) problem \cite{SautXu,Burtea}.}
\end{remark}

A drawback of Theorem \ref{theonew} is that the compatibility conditions \eqref{cc1new} are not easy to check, as the construction of the $U_j^{\rm in}$ involve the nonlocal operator $R_0$ through the operator $\cR$ in \eqref{inducid}. For instance, it is not clear to assess wether smooth initial data compactly supported away from the boundary satisfy the compatibility conditions \eqref{cc1new}. Taking advantage of the fact that the compatibility conditions \eqref{cc1new} are {\it estimates} rather than equations, we derive here a set of approximate compatibility conditions that do not involve nonlocal operator (and therefore much easier to check) that are sufficient to obtain the result of Theorem \ref{theonew}.

We start by noticing that the second equation in \eqref{inducid} can be equivalently written
$$
q_{j+1}^{\rm in}=-(1-\delta^2 \dx^2)^{-1}\Gamma_j
$$
where the inverse operator is associated to the boundary conditions 
$$q_{j+1}^{\rm in}\,_{\vert_{x=\pm R}}=\frac{1}{\alpha+2\delta}\big(\delta^2 \jump{\dx R_0 \Gamma_j}+\jump{\theta_j}\big).$$
A very na\"{\i}ve approximation of this formula is to replace the inverse by its Neumann expansion,
$$
q_{j+1}^{\rm in}\sim - \sum_{2l<n-j}  \delta^{2l}\dx^{2l}\Gamma_j.
$$
Replacing the second equation in \eqref{inducid} by this approximation leads us to approximate  $\dx^j U^{\rm in}_j$ by the $\widehat{U}^{\rm in}_{j,k}$ defined through the induction relation
\begin{equation}
\label{inducidapp}
\widehat{U}_{0,k}=\dx^k U^{\rm in}
\quad \mbox{ and }\quad
\left\{\begin{aligned}
&  
\widehat{\theta}_{j+1,k}^{\rm in} =  - \widehat{\Phi}^{\rm in}_{j,l}  \\
& \widehat{q}_{j+1,k}^{\rm in}  = -   \sum_{2l<n-k-j}  \delta^{2l}\widehat{\Gamma}_{j,2l+k}
\end {aligned}  \right. 
, \qquad j+k< n,
\end{equation} 
with $\widehat{\Phi}^{\rm in}_{j,k}$ and $\widehat{\Gamma}_{j,k}$ defined as $\dx^k\Phi_j$ and $\dx^k\Gamma_j$ but in terms of the $\widehat{U}_{j,k}$  rather than the $\dx^k U_j$. 
It is then natural to define the following approximate compatibility conditions: for some $M>0$ and all $\delta\in [0,1]$,
\begin{equation}
\label{cc1newapp} 
\begin{cases}
\big\lvert \jump{\widehat{q}_{j+1}^{\rm in}} \big\rvert  \le M \delta^{n-j-1/2} \\
\big\lvert  \alpha \av{\widehat{q}_{j+1,0}^{\rm in}} +\jump{\widehat{\theta}_{j,0}^{\rm in}} -\delta^2 \jump{\widehat{q}_{j+1,1}^{\rm in}} \big\rvert   \le M \delta^{n-j-1/2} 
\end{cases}
 \qquad  \mathrm{for} \ \  0 \le j \le n -1.
\end{equation}
\begin{remark} \textup{Note that $(\widehat{\theta}_{j}^{\rm in} , \widehat{q}_{j}^{\rm in} )$  are given by nonlinear functionals 
of $U^{\rm in}=(\theta_0^{\rm in}, q_0^{\rm in})$ involving space derivatives, of total order at most $n$. Therefore the conditions above are assumptions bearing only  on the initials data $U^{\rm in}$. The difference with the compatibility condition \eqref{cc1new} is that they do not involve nonlocal operators and just bear on the Taylor expansion of the initial data at the boundaries $x=\pm R$. They are therefore much easier to check; it is for instance trivial to verify that they are satisfied by smooth data compactly located away from the boundaries. }
\end{remark}

The fact that the approximate compatibility conditions are sufficient to keep the results of Proposition \ref{estID} and Theorem \ref{theonew} is not obvious, and the proof of the following corollary is left to Section \ref{ID} for the sake of clarity.
\begin{corollary}\label{coromain}
Under the same assumptions but replacing the compatibility condition \eqref{cc1new} by its approximation \eqref{cc1newapp}, the results of Proposition \ref{estID} and Theorem \ref{theonew} remain valid.
\end{corollary}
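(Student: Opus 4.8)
The plan is to deduce the corollary from Proposition~\ref{estID} by comparing the exact inductive quantities $\dx^k U_j^{\rm in}$ produced by \eqref{inducid} with the approximate ones $\widehat U_{j,k}^{\rm in}$ produced by \eqref{inducidapp}, and by showing that the approximate compatibility conditions \eqref{cc1newapp} already force the Sobolev bounds \eqref{estid}. Once \eqref{estid} is secured under \eqref{cc1newapp}, the proof of Theorem~\ref{theonew} is unchanged, since it uses the initial data only through these bounds. The key observation is that replacing $R_0$ by its truncated Neumann series $\sum\delta^{2l}\dx^{2l}$ commits exactly two errors -- an interior (bulk) remainder and the omission of the \emph{Dirichlet boundary layer} -- and that \eqref{cc1newapp} is precisely what controls the second.

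First I would make the Neumann truncation quantitative. Iterating $R_0 f = f + \delta^2\dx^2 R_0 f$ gives the exact identity, on each component of $\cE$,
\begin{equation*}
R_0 f = \sum_{l=0}^{L-1}\delta^{2l}\dx^{2l} f + \delta^{2L}\dx^{2L} R_0 f .
\end{equation*}
The remainder $\delta^{2L}\dx^{2L} R_0 f$ is $O(\delta^{2L})$ in the interior, but near $x=\pm R$ it carries the Dirichlet boundary layer $\propto e^{-\abs{x}_R/\delta}$ that the polynomial sum cannot reproduce. The cut-off $2l<n-k-j$ in \eqref{inducidapp} is tuned so that, after $k$ further $x$-derivatives, the interior part of the first discarded term is of size $O(\delta^{\,n-j-k})$, harmless at the threshold $\delta^{\,n-j-1/2}$, whereas the boundary-layer part has $k$-th derivative of size $O(\delta^{-k})$ and $H^{n-j}(\cE)$-norm carrying the singular factor $\delta^{-(n-j)+1/2}$.

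Next I would run an induction on $j$ to prove that $\dx^k U_j^{\rm in} = \widehat U_{j,k}^{\rm in} + r_{j,k}$, where $r_{j,k}$ splits into a bulk part of size $O(\delta^{\,n-j-k})$ and a boundary-layer part proportional to $e^{-\abs{x}_R/\delta}$ whose amplitude is governed precisely by the two combinations entering \eqref{cc1newapp}: its antisymmetric part by $\jump{\widehat q_{j+1}^{\rm in}}$ and its symmetric part by $\alpha\av{\widehat q_{j+1,0}^{\rm in}}+\jump{\widehat\theta_{j,0}^{\rm in}}-\delta^2\jump{\widehat q_{j+1,1}^{\rm in}}$. Under \eqref{cc1newapp} both are bounded by $M\delta^{\,n-j-1/2}$, so when multiplied by the singular factor $\delta^{-(n-j)+1/2}$ of the boundary layer they contribute only $O(1)$ to the $H^{n-j}(\cE)$ norm, exactly as in the proof of Proposition~\ref{estID}. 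The induction propagates these bounds through the nonlinear maps $\Phi$ and $\Gamma$ of \eqref{PhiGamma} by tame product and composition estimates, measured in the $\delta$-weighted norms $\|(\,\cdot\,,\delta\dx\,\cdot\,)\|_{H^{n+1-j}(\cE)}$ already furnished by \eqref{H0}--\eqref{H1}.

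The main obstacle is the bookkeeping of this induction. One must check, simultaneously at every level $j$ and every order $k\le n-j$, both that the truncation in \eqref{inducidapp} keeps the bulk error strictly below $\delta^{\,n-j-1/2}$ and that the nonlinearity does not inflate the boundary-layer amplitudes past what \eqref{cc1newapp} allows; the delicate point is that $\widehat\Gamma_{j,k}$ and $\widehat\Phi_{j,k}$ couple different derivative orders and lower-index data, so the error $r_{j,k}$ re-enters the construction at the next step and must be reabsorbed without loss of $\delta$-powers. Once this is under control, adding the bulk and boundary-layer contributions yields \eqref{estid} with a constant $C(n,M)$ under hypothesis \eqref{cc1newapp}, which proves Proposition~\ref{estID} with \eqref{cc1newapp} in place of \eqref{cc1new}; Theorem~\ref{theonew} then follows verbatim, establishing Corollary~\ref{coromain}.
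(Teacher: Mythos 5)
Your strategy is the right one and is, at its core, the same as the paper's: an induction on $j$ showing that the approximate compatibility conditions \eqref{cc1newapp} force the exact ones \eqref{cc1new} (up to enlarging $M$), after which the bounds of Proposition \ref{estID} and then Theorem \ref{theonew} follow verbatim. The difference is in the technical vehicle. You propose a function-level decomposition $\dx^k U_j^{\rm in}=\widehat U_{j,k}^{\rm in}+r_{j,k}$ on all of $\cE$, obtained from the Neumann-series identity $R_0 f=\sum_{l<L}\delta^{2l}\dx^{2l}f+\delta^{2L}\dx^{2L}R_0f$, with $r_{j,k}$ split into a bulk remainder and a Dirichlet boundary layer. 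The paper (Proposition \ref{estIDfin}) avoids this entirely: since both the exact conditions \eqref{cc1new} and the approximate ones \eqref{cc1newapp} involve only boundary traces, it suffices to compare the \emph{scalars} $U^\pm_{j,k}=\dx^kU_j^{\rm in}{}_{\vert x=\pm R}$ with $\widehat U^\pm_{j,k}$, proving inductively that $|U^\pm_{j,k}-\widehat U^\pm_{j,k}|\le M_j\delta^{n-j-k+1/2}$ alongside the Sobolev bound $\|(\theta_j^{\rm in},q_j^{\rm in},\delta\dx q_j^{\rm in})\|_{H^{n+1-j}}\le M_j$. The propagation of the trace comparison through $\cR$ is exactly the content of the already-proved Proposition \ref{propTaylor} (which identifies $\dx^l q_{\vert x=\pm R}$ with $\cD^\pm_{k-l}\dx^l f$ up to errors controlled by $A$ and $B$), and the Sobolev bound at level $j+1$ then comes from Proposition \ref{propb1}/Corollary \ref{corob1}. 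This sidesteps the bookkeeping you correctly flag as the main obstacle: in your version the boundary-layer part of $r_{j,k}$, whose derivatives are large, would have to be fed back through the nonlinear maps $\Phi$ and $\Gamma$ at the next step, whereas comparing only traces means the nonlinearities are handled by finite-dimensional Lipschitz estimates on the polynomials $\cT_{j,k}$, $\cG_{j,k}$ evaluated on a bounded set. Your approach can be made to work, but the paper's trace-only comparison buys a substantially lighter induction and reuses lemmas already in place.
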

\begin{remark}
Contrary to Theorem \ref{theonew}, Corollary \ref{coromain} remains valid when $\delta=0$. In this case, the transmission problem \eqref{Boussthetanl}-\eqref{initial1} is hyperbolic and the approximate compatibility conditions \eqref{cc1newapp} are the standard hyperbolic compatibility conditions.
\end{remark}

\subsection{Outline of the proof}

 The outline of the paper is as follows:  Section~\ref{ID}  is devoted to the derivation, analysis  and approximation of the compatibility conditions. In section~\ref{estimates}  we prove \emph{a priori} uniform estimates for the time derivatives 
$U_j=\dt^j U$. This requires $L^2$ estimates for the linearized system (linear stability) and a  control of the commutators for which we use a generalization of the space-time Gagliardo-Nirenberg estimates that makes explicit the singular dependence of the constants when the time interval is small.   \\
In Section~\ref{dxD} we prove \emph{a priori} uniform estimates for spatial derivatives.  As in the hyperbolic case, we use the equations, but this is now much trickier.  Because of the dispersive term, instead of getting explicit expressions  $\dx^k \theta  = \dots  $  in terms of time derivatives and lower order $x$-derivatives, we are led to solve  equations of the form
$$
(1 + \delta^2 \dt^2) \dx^k \theta = \dots 
$$
and thus we have to control the rapid oscillations created by $(1 + \delta^2 \dt^2)$. Finally, the proof of the main Theorem~\ref{theonew} and of Corollary \ref{coromain} in done in Section~\ref{final}. It is based on a control of the blow up criterion provided by Proposition \ref{prop1}.


\section{Compatible initial data}
\label{ID} 

The goal of this section is to prove the uniform estimates of Proposition~\ref{estID} under the compatibility conditions \eqref{cc1new} and to show, as claimed in Corollary \ref{coromain} that these uniform estimates remain true under the approximate compatibility conditions   \eqref{cc1new}. \\
Throughout this section  $\delta \in (0, 1]$ and we think of it as beeing  small. 


\subsection{Analysis of the mapping $\cR$ } \label{sect41}

The key point for the derivation of the compatibility conditions is the analysis of the second, nonlocal, equation in the induction relation \eqref{inducid} that is used to construct the $U_j^{\rm in}$. We are therefore led to study  the operator $\cR$ defined in \eqref{defCR} : 
\begin{equation}
\label{qind}
 \cR: (f, \rho )  \quad \mapsto \quad   R_0 f  + \frac{1}{\alpha + 2 \delta} \big(\rho +  \delta^2 \jump{ \dx R_0 f }  \big)  e^{ - \frac{1}{\delta} |x|_R}.
\end{equation}
This operator is well defined from   $ ( H^{k} (\cE)\times \RR)  $ to  $H^{k}(\cE)$ for $k \ge 0$. We look for estimates which make explicit the dependence on the parameter $\delta$. 
When $k = 0$, we note that  $R_0$ ,  $\delta  \dx R_0 $   and $\delta^2  \dx^2 R_0 $  are uniformly bounded in  $L^2$. In particular,   
$$
\big|  \delta^2 \jump{ \dx R_0 f }  \big|^2   \le \| \delta ^2  \dx R_0 f \|_{L^2} 
 \| \delta ^2  \dx^2 R_0 f \|_{L^2}  \le C \delta \| f\|_{L^2}^2 
$$
 so that, with $q=\cR (f,\rho)$,
\begin{equation}
\label{est25} 
\| (q, \delta \dx q )\|_{L^2} \le C( \| f \|_{L^2} +  \delta^{1/2} | \rho  | ) . 
\end{equation}
When $k > 0$, the difficulty is that $\dx$ and $R_0$ do not commute and that 
 the layer $ e^{ - \frac{1}{\delta} |x|_R}$
is not uniformly bounded in $H^1$. In this section, we show that compatibility conditions are needed in order to obtain uniform higher order Sobolev estimates on $q=\cR(f,\rho)$.

\subsubsection{Higher order estimates of $R_0$}

As previously said, the $H^k$-norms of $R_0 f$ have a singular dependence on $\delta$. We make here this singular dependence explicit.  Let $R_1$ denote the inverse of $(1-\delta^2 \partial_x^2)$ on $\cE$ with homogeneous Neumann conditions at $x=\pm R$. 
\begin{proposition}
\label{lemtr1}
For   $k  >  0  $ and $f\in H^k(\cE)$, one has 
$$
\dx^k R_0 f  = R_{\iota} \dx^k f   -   ( \mp 1)^k   \delta^{-k} 
(\mathcal D_k^\pm   f ) e^{  - \frac{1}{\delta} | x|_R}   \qquad  \mbox{on }\quad \cE_\pm
$$with 
\begin{itemize}
\item   $\iota = 0$ when $k$ is even  
\item  $\iota = 1$ when $k$ is odd, 
\end{itemize}
and
$$
\mathcal D_k^\pm f  =  \sum_{ l < k/2} (\delta \dx)^{2l} f _{|  x = \pm R} . 
$$
\end{proposition}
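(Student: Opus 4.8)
The plan is to reduce the formula to an elementary ODE computation on each half-line, together with a careful bookkeeping of the boundary traces of $u := R_0 f$. First I would record the elliptic regularity gain: since $f \in H^k(\cE)$ and $(1-\delta^2\dx^2)u = f$ with $u_{|x=\pm R}=0$, one has $u \in H^{k+2}(\cE)$, so that $\dx^k u \in H^2(\cE)$ has well-defined traces of $\dx^k u$ and $\dx^{k+1}u$ at $x=\pm R$. Differentiating the equation $k$ times gives $(1-\delta^2\dx^2)\dx^k u = \dx^k f$ in $L^2(\cE)$, while by definition $(1-\delta^2\dx^2)R_\iota \dx^k f = \dx^k f$ as well. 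Hence the difference $w_k := \dx^k u - R_\iota \dx^k f$ solves the homogeneous equation $(1-\delta^2\dx^2)w_k=0$ on $\cE$ and lies in $L^2$; on each half-line the only $L^2$ solution is a multiple of the decaying exponential, so $w_k = c_\pm\, e^{-\frac1\delta\abs{x}_R}$ on $\cE_\pm$ for constants $c_\pm$. Since $\dx^k(e^{-\frac1\delta\abs{x}_R}) = (\mp1)^k \delta^{-k} e^{-\frac1\delta\abs{x}_R}$ on $\cE_\pm$, the claim amounts to proving $c_\pm = -(\mp1)^k\delta^{-k}\mathcal{D}_k^\pm f$.

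The second step is to compute the even-order traces of $u$ at the contact points. Rewriting the equation as $\delta^2\dx^2 u = u - f$ and differentiating $2(m-1)$ times yields $\delta^2 \dx^{2m}u = \dx^{2m-2}u - \dx^{2m-2}f$; evaluating at $x=\pm R$ and using $u_{|x=\pm R}=0$ one gets by a short induction on $m$ that
\[
\dx^{2m}u_{|x=\pm R} = -\delta^{-2m}\sum_{l=0}^{m-1}(\delta\dx)^{2l}f_{|x=\pm R} = -\delta^{-2m}\,\mathcal{D}_{2m}^\pm f.
\]
Crucially, only even-order traces of $u$ are accessible this way, because the Dirichlet condition feeds the recursion only at even orders (the odd-order traces of $u$ would involve the unknown $\dx u_{|x=\pm R}$).

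The final step---and the point that dictates the parity-dependent choice of $\iota$---is to pin down $c_\pm$ by matching the boundary condition of the reference operator $R_\iota$. When $k=2m$ is even I take $\iota=0$: then $R_0\dx^k f$ vanishes at $x=\pm R$, so evaluating $w_k$ there (where $e^{-\frac1\delta\abs{x}_R}=1$) gives $c_\pm = \dx^{2m}u_{|x=\pm R} = -\delta^{-2m}\mathcal{D}_{2m}^\pm f$, which is the desired value since $(\mp1)^{2m}=1$. When $k=2m-1$ is odd I take $\iota=1$: then $\dx(R_1\dx^k f)$ vanishes at $x=\pm R$, so differentiating $w_k$ and using $\dx(e^{-\frac1\delta\abs{x}_R})_{|x=\pm R}=(\mp1)\delta^{-1}$ gives $(\mp1)\delta^{-1}c_\pm = \dx^{k+1}u_{|x=\pm R}=\dx^{2m}u_{|x=\pm R}$; solving and inserting the trace formula yields $c_\pm = (\mp1)\delta\cdot(-\delta^{-2m})\mathcal{D}_{2m}^\pm f = -(\mp1)^{2m-1}\delta^{-(2m-1)}\mathcal{D}_{2m-1}^\pm f$, using that $\mathcal{D}_{2m-1}^\pm=\mathcal{D}_{2m}^\pm$. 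This matches the claimed coefficient in both parities, which completes the proof. The main obstacle is precisely this last matching: the recursion produces only even-order boundary traces of $u$, so for odd $k$ one must avoid comparing values (which would require the inaccessible odd-order trace) and instead compare \emph{derivatives} against the Neumann inverse $R_1$---this is exactly why the statement pairs even $k$ with $R_0$ and odd $k$ with $R_1$.
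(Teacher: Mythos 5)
Your proof is correct. It is worth noting, though, that you organize the argument differently from the paper. The paper proves two one-derivative exchange identities, $\dx R_0 f = R_1\dx f \pm \delta^{-1}f_{\vert_{x=\pm R}}e^{-\delta^{-1}\abs{x}_R}$ and $\dx R_1 f = R_0\dx f$ (each obtained by subtracting a suitable multiple of the exponential layer and checking the boundary condition), and then simply iterates them $k$ times, alternating between $R_0$ and $R_1$; the sums $\mathcal D_k^\pm f$ accumulate automatically along the iteration. You instead make a single global observation --- that $w_k=\dx^k R_0f-R_\iota\dx^kf$ lies in $L^2$ and in the kernel of $1-\delta^2\dx^2$ on each half-line, hence is a multiple of the decaying exponential --- and then determine the coefficient by a separate induction on the even-order boundary traces of $u=R_0f$ via $\delta^2\dx^{2m}u_{\vert_{\pm R}}=\dx^{2m-2}u_{\vert_{\pm R}}-\dx^{2m-2}f_{\vert_{\pm R}}$, matching values against $R_0$ for even $k$ and derivatives against $R_1$ for odd $k$. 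The underlying mechanism (the commutation defect lives in the one-dimensional kernel and is pinned down by boundary data) is the same, but your version cleanly explains \emph{why} the parity of $k$ dictates the choice of $\iota$, which the paper's iteration leaves implicit; the paper's version, in exchange, produces the explicit intermediate formulas for $\dx^{2l}R_0f$ and $\dx^{2l+1}R_0f$ that are reused later (e.g.\ in Corollary \ref{deriveop}). All the details I checked --- the sign $(\mp1)^k\delta^{-k}$ for $\dx^k e^{-\delta^{-1}\abs{x}_R}$ on $\cE_\pm$, the trace recursion, the inversion $(\mp1)^{-1}=(\mp1)$ in the odd case, and the identity $\mathcal D^\pm_{2m-1}=\mathcal D^\pm_{2m}$ --- are sound.
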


\begin{proof}
For all $f\in H^1(\cE)$, the following identities hold,
\begin{align*}
\dx R_0 f&=R_1\dx f  \  \pm \ \delta^{-1} f_{\vert_{x=\pm R}}
  e^{-\delta^{-1}\vert x\vert_R} &\mbox{on }\quad \cE_\pm,\\
\dx R_1 f&=R_0\dx f &\mbox{on }\quad \cE_\pm . 
\end{align*}
For the first identity, just notice that if  $u = R_0 f$,  then $v=\dx u -c e^{-\delta^{-1}\vert x\vert_R}$ solves $(1-\delta^2 \dx^2)v = \dx f$ on $\cE$ with boundary condition 
$$
\delta^2 \dx v_{\vert_{x=\pm R}}=  \delta^2 \dx^2 u_{| x = \pm R} \pm \delta c = 
-f_{\vert_{x=\pm R}}\pm \delta c. 
$$
Thus   $\dx v_{\vert_{x=\pm R}}=0$ and  $v=R_1\dx f$  if $c=\pm \delta^{-1}f_{\vert_{x=\pm R}}$. 
For the second identity, if $u = R_1 f$,  then $v= \dx u$ solves $(1-\delta^2 \dx^2)v = \dx f$ with boundary condition $v_{\vert_{x=\pm R}}=0$, so that $v=R_0\dx f$.

\medskip

\noindent Hence, for $f \in H^2$,
$$
\dx^2 R_0 f = R_0 \dx^2 f  -  \delta^{-2} f_{\vert_{x=\pm R}} e^{ - \frac{1}{\delta} | x|_R} . 
$$
Iterating these identities yields 
$$
\dx^{2l} R_0 f = R_0 \dx^{2l} f  -  \delta^{-2l}  \Big(  \sum_{ l' < l} ( \delta^2 \dx^2)^{l'}f_{\vert_{x=\pm R}} \Big)  
e^{ - \frac{1}{\delta} | x|_R} ,
$$
 $$
 \dx^{2l+1} R_0 f =  
  R_1 \dx^{2l+1}  f    
    \pm  \delta^{-2l-1}  \Big(  \sum_{ l' \le  l} (\delta^2 \dx^2)^{l'}f_{\vert_{x=\pm R}} \Big)  e^{ - \frac{1}{\delta} | x|_R}, 
 $$
proving the lemma. 
\end{proof}

Thus the lack of commutation of  $R_0$ with derivatives is encoded in the trace operators 
$\cD^\pm_k $. It is convenient and more symmetric to introduce 
$$
 \jump {\mathcal D_{k} f}  =   \mathcal D^+ _{k} f  -\mathcal D^- _{k} f, \qquad 
  \av{\mathcal D_k  f}  = \frac{1}{2} (    \mathcal D^+ _{k} f  +\mathcal D^- _{k} f ) . 
$$
The following corollary tells us that if $q$ is a function of the form
\begin{equation}\label{eqdefq}
q=R_0 f + \sigma  e^{ - \frac{1}{\delta} | x|_R}
\end{equation}
(as is $\cR (f,\rho)$ by \eqref{qind}), and if $q$ is bounded in $H^k(\cE)$ then necessarily the singularities of $R_0 f$ (explicited in Proposition \ref{lemtr1}) and of the exponential layer must compensate. Moreover, the second point of the corollary shows that the global contribution of these singularities can be controlled by $ \jump {\mathcal D_{k} f}$ and  $\av {\mathcal D_{k}f} -\sigma$.
\begin{corollary}
\label{cor41} 
For $k \ge 1$, there are  constants $C$  and  $C'$   such that for all $f \in H^k(\cE)$ and $\sigma \in \RR$ and 
$\delta \in (0, 1]$, 
the function $ q = R_0 f + \sigma  e^{ - \frac{1}{\delta} | x|_R}$ satisfies 
\begin{equation}
\label{tr2}
  |  {\mathcal D}_k^\pm f -\sigma |   \le    \delta^{k -1/2} C  (  \|   f  \|_{H^k(\cE_\pm)}  + \| q  \|_{H^k(\cE_\pm)}  )  
\end{equation}
and 
\begin{equation}
\label{tr1b} 
\| (q, \delta \dx q)    \|_{H^k(\cE)} \le C'     \|   f  \|_{H^k(\cE)}  +  
 C'  \delta^{ - k + 1/2} \big(  |  \jump {\mathcal D_{k} f}  + | \av{\mathcal D_k f}  - \sigma | \big).
\end{equation}
\end{corollary}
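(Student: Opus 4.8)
The plan is to read off both estimates from the trace identity of Proposition~\ref{lemtr1}. Differentiating $q = R_0 f + \sigma e^{-\frac{1}{\delta}\abs{x}_R}$ exactly $k$ times and using that $\dx^k e^{-\frac{1}{\delta}\abs{x}_R} = (\mp1)^k\delta^{-k}e^{-\frac{1}{\delta}\abs{x}_R}$ on $\cE_\pm$, the singular part of $R_0 f$ and the layer combine into the single identity
\begin{equation}\label{keyid}
\dx^k q = R_\iota \dx^k f + (\mp1)^k\delta^{-k}\big(\sigma - \cD_k^\pm f\big)\,e^{-\frac{1}{\delta}\abs{x}_R}\qquad\text{on }\cE_\pm ,
\end{equation}
with $\iota\in\{0,1\}$ the parity index of Proposition~\ref{lemtr1}. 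This is the only structural input; everything else is bookkeeping on $L^2$-norms via the two scaling facts $\|e^{-\frac{1}{\delta}\abs{x}_R}\|_{L^2(\cE_\pm)}=(\delta/2)^{1/2}$ and $\|R_\iota\|_{L^2\to L^2}\le1$ (the latter from testing $u-\delta^2\dx^2u=g$ against $u$).

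For \eqref{tr2} I would isolate the layer in \eqref{keyid}, writing $(\mp1)^k\delta^{-k}(\sigma-\cD_k^\pm f)e^{-\frac{1}{\delta}\abs{x}_R}=\dx^k q-R_\iota\dx^k f$, and take the $L^2(\cE_\pm)$ norm. Since the layer has norm $(\delta/2)^{1/2}$ while $\|R_\iota\dx^k f\|_{L^2}\le\|f\|_{H^k(\cE_\pm)}$, solving for the coefficient gives $\delta^{-k}|\sigma-\cD_k^\pm f|\le C\delta^{-1/2}\big(\|q\|_{H^k(\cE_\pm)}+\|f\|_{H^k(\cE_\pm)}\big)$, which is exactly \eqref{tr2}.

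For \eqref{tr1b} the one-sided defects are recombined into the symmetric quantities through the elementary identities $\cD_k^\pm f-\sigma=(\av{\cD_k f}-\sigma)\pm\tfrac12\jump{\cD_k f}$, whence $|\sigma-\cD_k^\pm f|\le|\av{\cD_k f}-\sigma|+\tfrac12|\jump{\cD_k f}|$; this is what converts the one-sided bounds from \eqref{keyid} into the claimed right-hand side. Taking $L^2$-norms in \eqref{keyid} yields $\|\dx^k q\|_{L^2}\le C\|f\|_{H^k}+C\delta^{-k+1/2}|\sigma-\cD_k^\pm f|$. For the intermediate derivatives $\dx^j q$ with $1\le j<k$ I apply Proposition~\ref{lemtr1} with index $j$; the defect $\sigma-\cD_j^\pm f$ differs from $\sigma-\cD_k^\pm f$ by the traces $\delta^{2l}(\dx^{2l}f)|_{x=\pm R}$ with $j\le2l<k$, each $\le C\|f\|_{H^k}$ by the trace theorem (since $2l+1\le k$) and carrying a nonnegative power of $\delta$, hence absorbed into $C\|f\|_{H^k}$, while $\delta^{-j+1/2}|\sigma-\cD_k^\pm f|\le\delta^{-k+1/2}|\sigma-\cD_k^\pm f|$ because $\delta\le1$ and $j\le k$. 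The $j=0$ term is bounded directly by $\|f\|_{L^2}+|\sigma|(\delta/2)^{1/2}$, and $|\sigma|\le|\av{\cD_k f}-\sigma|+C\|f\|_{H^k}$ (the trace bound $|\av{\cD_k f}|\le C\|f\|_{H^k}$) disposes of it.

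The genuinely delicate point—the crux of the corollary—is the top weighted derivative $\delta\dx^{k+1}q$, which must be controlled with only $f\in H^k$ available. Differentiating \eqref{keyid} once more, the layer reproduces itself (up to sign) with the same coefficient, handled as above, whereas the regular part becomes $\delta\dx R_\iota(\dx^k f)$. The hard part is that this is bounded by $\|\dx^k f\|_{L^2}\le\|f\|_{H^k}$ uniformly in $\delta$, even though $\dx^k f$ is merely $L^2$: this follows from the energy estimate for $u=R_\iota g$, namely $\|u\|_{L^2}^2+\delta^2\|\dx u\|_{L^2}^2\le\|g\|_{L^2}\|u\|_{L^2}$ (the boundary terms vanishing under either the Dirichlet or the Neumann condition), which gives $\|\delta\dx R_\iota\|_{L^2\to L^2}\le1$. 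Collecting the bounds for all $\dx^j q$ with $0\le j\le k$ and for $\delta\dx^{k+1}q$, and inserting $|\sigma-\cD_k^\pm f|\le|\av{\cD_k f}-\sigma|+\tfrac12|\jump{\cD_k f}|$, yields \eqref{tr1b}.
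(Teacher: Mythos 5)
Your proof is correct and follows essentially the same route as the paper: the identity $\dx^l q = R_\iota \dx^l f + (\mp\delta)^{-l}(\sigma-\cD_l^\pm f)e^{-\abs{x}_R/\delta}$ from Proposition~\ref{lemtr1}, the bounds $\|R_\iota\|_{L^2\to L^2}\le 1$ and $\|\delta\dx R_\iota\|_{L^2\to L^2}\le 1$, the comparison of $\cD_l^\pm f$ with $\cD_k^\pm f$ via traces of $(\delta\dx)^{2l'}f$, and the recombination of the one-sided defects into $\jump{\cD_k f}$ and $\av{\cD_k f}-\sigma$ all appear identically in the paper's argument. No substantive differences.
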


\begin{proof}
Note that
$$
\dx^l e^{  - \frac{1}{\delta} | x|_R}  =   (\mp 1)^l   \delta^{ -l} e^{  - \frac{1}{\delta} | x|_R} \qquad  \mbox{on }\quad \cE_\pm. 
$$
Hence, for  $0 < l \le k$, one has on $\cE_\pm$ 
$$
\dx^l q   = R_{\iota} \dx^l f   +  (\mp \delta)^{-l} \Big( 
\sigma - \mathcal D_l^\pm   (f )  \  \Big) e^{  - \frac{1}{\delta} | x|_R} . 
$$
Thus 
$$
 \delta^{ - l+1/2}  |\sigma  -  \mathcal D^\pm_k (f) | \le      \| \dx^k q \|_{L^2(\cE_\pm)} + \|  \dx^k f \|_{L^2(\cE_\pm)} 
$$
since   $\| R_\iota \|_{ L^2 \mapsto L^2} \le 1 $. 
Taking $l = k$, and noticing that 
$$
|  \mathcal D_{k}^+ f - \sigma |  + |  \mathcal D_{k}^-  f - \sigma |   \approx 
| \jump{ \mathcal D_{k}^\pm f }  | + |  \av{\mathcal D_{k}f }- \sigma | , 
$$ 
 \eqref{tr2} follows.

Moreover,    $\| \delta \dx R_\iota \|_{ L^2 \mapsto L^2} \le 1$. Therefore, for $l \le k$ 
$$
\| (\dx^l q , \delta \dx^{l+1} q)    \||_{L^2(\cE_\pm)} 
 \le     \|   f  \|_{H^k(\cE_\pm)}  +  \delta^{ - l +1/2} |  \sigma  - \mathcal D_l^\pm   (f )  | . 
$$
Note that, for $l < k$, one has 
$$
| \mathcal D^\pm_l (f) - \mathcal D^\pm_k (f) | =  \Big| \sum_{l \le 2l' < k} (\delta \dx)^{2l'} f _{| x = \pm R} \Big| \le
C \delta^{l} \| f \|_{H^k} 
$$
so that 
$$
\delta^{ - l+1/2} \big|\sigma  -  \mathcal D^\pm_l (f)  \big| \le    \delta^{ - l+1/2}  \big|\sigma  -  \mathcal D^\pm_k (f) \big|  +  
C  \delta^{1/2 } \| f \|_{H^k} . 
$$
Together with \eqref{est25} when $l= 0$, this implies  that 
$$
\| (q, \delta \dx q)      \|_{H^k(\cE_\pm)} \le C   \Big(  \|   f  \|_{H^k(\cE_\pm)}  +  
\delta^{ - k + 1/2} |  \mathcal D_{k}^\pm f - \sigma | \Big) . 
$$
and the  estimate \eqref{tr1b} follows. 
\end{proof} 

\subsubsection{Taylor expansions of $R_0$ and $R_1$}

In \eqref{qind}, the quantity $\cR(f,\rho)$ is defined through an equation of the form \eqref{eqdefq} but with a scalar  $\sigma $ depending  itself on $\jump{ \delta^2 \dx R_0 f }$. We are therefore led to study the Taylor expansion of terms  of the form $R_0 f$ and, through Proposition \ref{lemtr1}, of $R_1f$. We start with  a useful preliminary  result. Introduce the non local trace operators 
$$
 I^\pm(f)  = \delta^{-1} \int_{\cE_\pm} e^{-  \frac{1}{\delta} |y |_R}  f(y) {\rm d}y, 
$$
from $L^2(\cE_\pm) $ to $\RR$, which satisfy
$$
 | I^\pm   (f) |  \le  (2\delta)^{- 1/2}  \| f \|_{L^2 (\cE_\pm)}. 
$$

\begin{proposition}\label{nonlocal}
For $f  \in L^2 (\mathcal E_\pm)$,   
$$ R_1 f _{| x = \pm R}  = I^\pm (f)$$
and for   $f  \in H^k (\mathcal E)$ with $k \ge  1$, 
\begin{equation}
\label{tr8} 
I^\pm (f) =   \sum_{l=0}^{k-1}    (\pm \delta \dx)^l  f_{| x = \pm R}   + (\pm \delta )^k  I^\pm (\dx^k f) . 
\end{equation}
\end{proposition}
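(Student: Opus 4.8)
The plan is to establish the two identities separately. For the trace formula $R_1 f_{\vert x = \pm R} = I^\pm(f)$, the idea is to test the defining equation of $v := R_1 f$ against the boundary-layer profile $\psi(x) = e^{-\frac{1}{\delta}\abs{x}_R}$ and to exploit that $\psi$ is a \emph{homogeneous} solution of the operator: a direct computation gives $\dx\psi = \mp\frac{1}{\delta}\psi$ and $\dx^2\psi = \frac{1}{\delta^2}\psi$ on $\cE_\pm$, so that $(1-\delta^2\dx^2)\psi = 0$ there. Recalling that $v$ solves $(1-\delta^2\dx^2)v = f$ on $\cE$ with the Neumann conditions $\dx v_{\vert x = \pm R} = 0$ and decays at infinity, I would use the formal self-adjointness of $1-\delta^2\dx^2$ (i.e. integrate by parts twice) on $\cE_+$ to obtain
\[
\int_{\cE_+}\psi\,f = \int_{\cE_+}\psi\,(1-\delta^2\dx^2)v = -\delta^2\big[\psi\,\dx v - (\dx\psi)\,v\big]_{x=R}^{x=+\infty},
\]
the volume term dropping out precisely because $\psi$ is a homogeneous solution.

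It then remains to evaluate the boundary bracket, which is the only place where the structure of the problem enters. At $x=+\infty$ both $v$ and $\psi$ decay, so only the lower endpoint $x=R$ contributes; there the Neumann condition $\dx v(R)=0$ kills the $\psi\,\dx v$ term, and with $\psi(R)=1$, $\dx\psi(R)=-\frac1\delta$ the bracket equals $-\frac1\delta v(R)$. Hence $\int_{\cE_+}\psi f = \delta\,v(R)$, i.e. $v(R) = \delta^{-1}\int_{\cE_+}e^{-\frac1\delta\abs{x}_R}f = I^+(f)$; the same computation on $\cE_-$, where now $\dx\psi = +\frac1\delta\psi$ and $-R$ is the \emph{upper} endpoint, yields $v(-R) = I^-(f)$. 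I expect the main (mild) obstacle here to be the sign bookkeeping of $\dx\psi$ together with the orientation of the endpoint terms, which differ between the two sides but conspire to give the same conclusion; this is also exactly the point where using Neumann rather than Dirichlet data is essential, since the Dirichlet case would retain the $\psi\,\dx v$ term and would not produce the trace $v(\pm R)$.

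For the Taylor-type identity \eqref{tr8} the plan is a single integration by parts followed by an induction on $k$. On $\cE_+$, writing $\frac1\delta e^{-\frac1\delta\abs{x}_R} = -\dx e^{-\frac1\delta\abs{x}_R}$ and integrating by parts gives the base identity
\[
I^+(f) = \big[-e^{-\frac1\delta\abs{x}_R}f\big]_{x=R}^{x=+\infty} + \int_{\cE_+}e^{-\frac1\delta\abs{x}_R}\,\dx f = f_{\vert x=R} + \delta\,I^+(\dx f),
\]
and symmetrically $I^-(f) = f_{\vert x=-R} - \delta\,I^-(\dx f)$, which is \eqref{tr8} for $k=1$ with the sign $\pm$. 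Replacing $f$ successively by $\dx f, \dx^2 f, \dots, \dx^{k-1}f$ and summing the telescoping traces then yields $I^\pm(f) = \sum_{l=0}^{k-1}(\pm\delta\dx)^l f_{\vert x=\pm R} + (\pm\delta)^k I^\pm(\dx^k f)$. This part is routine; the only points to check are that $f\in H^k(\cE)$ makes the traces $\dx^l f_{\vert x=\pm R}$ meaningful for $l\le k-1$ and that the remainder $I^\pm(\dx^k f)$ is well defined by the $L^2$ bound $\lvert I^\pm(g)\rvert \le (2\delta)^{-1/2}\|g\|_{L^2(\cE_\pm)}$ already recorded before the statement.
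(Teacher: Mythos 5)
Your proof is correct. The second identity \eqref{tr8} is obtained exactly as in the paper: the single integration by parts $I^\pm(f)=f_{\vert x=\pm R}\pm\delta I^\pm(\dx f)$ (valid since $e^{-\frac{1}{\delta}\abs{y}_R}=\mp\delta\dx\big(e^{-\frac{1}{\delta}\abs{y}_R}\big)$ on $\cE_\pm$), iterated $k$ times. Where you diverge is in the trace formula $R_1 f_{\vert x=\pm R}=I^\pm(f)$. The paper gets it as a one-line consequence of \eqref{tr8} applied with $k=2$ to $u=R_1f$: writing $I^\pm(f)=I^\pm(u)-\delta^2 I^\pm(\dx^2u)=u_{\vert x=\pm R}\pm\delta\,\dx u_{\vert x=\pm R}$, the Neumann condition kills the second term. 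You instead prove it independently by a Green's identity, pairing the equation against the homogeneous solution $\psi=e^{-\frac{1}{\delta}\abs{x}_R}$ and tracking the boundary bracket; your sign bookkeeping on both half-lines and the vanishing at infinity (from $v\in H^2(\cE_\pm)$) are all right. The two arguments are the same integration by parts in different clothing, but the paper's ordering is more economical (it reuses \eqref{tr8} and never has to discuss decay at infinity of $v$ separately), while your duality computation is self-contained and makes transparent \emph{why} the Neumann condition, rather than the Dirichlet one, is what turns $I^\pm$ into the trace of $R_1$ --- a point the paper leaves implicit.
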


\begin{proof}
For 
$f \in H^1$, 
since $ e^{-  \frac{1}{\delta} |y |_R}  = \mp   \delta \dx ( e^{-  \frac{1}{\delta} |y|_R} )$ one has 
$$
    I^\pm (f)    =       f_{| x = \pm R}  \   \pm  \ I^\pm  ( \delta \dx f) . 
$$
Iterating this identity yields  \eqref{tr8}.  In particular, if     $u$ satisfies  $u - \delta^2 \dx^2 u = f$, then 
$$
I^\pm (f) = I^\pm (u) - I^\pm (\delta ^2 \dx^2 u) =  u _{| x =\pm R} \pm \delta \dx u_{| x =\pm R}. 
$$
Thus if  $u = R_1 f$, then $u_{| x =\pm R} = I^\pm (f)$. 
\end{proof}
Together with Proposition \ref{lemtr1}, this result can be used to obtain Taylor expansions of $R_0 f$ at $x=\pm$ in terms of $\delta$.
\begin{corollary} \label{deriveop}
Suppose that $f \in H^k(\cE)$, with $k>0$.  Then, for $0\leq l \leq k$, 
\begin{equation}
\label{tr5}
(\dx^l R_0 f) _{\vert  x = \pm R}  =    \delta^{-l} {\mathcal R}^\pm_{l, k}   f   \pm  \iota  (\pm \delta)^{k-l} 
I^\pm (\dx^k f) 
\end{equation}
where  
\begin{itemize}
\item when $l$ is even then $\iota = 0$ and 
$$  {\mathcal R}^\pm_{l, k}  f =  -  \cD^\pm_l f $$
\item when $l$ is odd then $\iota = 1$ and 
$$
  {\mathcal R}^\pm_{l,k}   f  =  \pm  \Big(   {\mathcal D}^\pm_l  f  \ +  \ 
 \sum_{l'=l }^{k- 1}    (\pm \delta)^{l'}  \dx^{l'}  f_{| x = \pm R} \Big).
$$
\end{itemize}
\end{corollary}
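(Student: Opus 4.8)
The plan is to obtain Corollary~\ref{deriveop} as a direct consequence of Proposition~\ref{lemtr1} and Proposition~\ref{nonlocal}, treating the two parities of $l$ separately. First I would apply Proposition~\ref{lemtr1} with $k$ replaced by $l$, which gives, on $\cE_\pm$,
$$
\dx^l R_0 f = R_\iota \dx^l f - (\mp 1)^l \delta^{-l} (\mathcal D_l^\pm f)\, e^{-\frac1\delta |x|_R},
$$
with $\iota=0$ for $l$ even and $\iota=1$ for $l$ odd. The crucial simplification is that $|x|_R=0$ at $x=\pm R$, so the exponential layer equals $1$ on the boundary and evaluating there yields
$$
(\dx^l R_0 f)_{\vert x=\pm R} = (R_\iota \dx^l f)_{\vert x=\pm R} - (\mp 1)^l \delta^{-l}\, \mathcal D_l^\pm f .
$$
Everything then reduces to computing the boundary trace $(R_\iota \dx^l f)_{\vert x=\pm R}$ in each case.

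For $l$ even ($\iota=0$) this trace vanishes, since $R_0$ carries homogeneous Dirichlet data; using $(\mp1)^l=1$ one is left with $(\dx^l R_0 f)_{\vert x=\pm R} = -\delta^{-l}\mathcal D_l^\pm f$, which is exactly the claimed expression with $\mathcal R^\pm_{l,k}f=-\mathcal D_l^\pm f$ and no remainder. For $l$ odd ($\iota=1$) the trace is no longer zero, and I would compute it with Proposition~\ref{nonlocal}: since $R_1$ satisfies only a Neumann condition, $(R_1 \dx^l f)_{\vert x=\pm R}=I^\pm(\dx^l f)$. Applying the Taylor identity \eqref{tr8} to $g=\dx^l f\in H^{k-l}(\cE)$ and reindexing $l'=j+l$ gives
$$
I^\pm(\dx^l f)=\sum_{l'=l}^{k-1}(\pm\delta)^{l'-l}\dx^{l'}f_{\vert x=\pm R}+(\pm\delta)^{k-l}I^\pm(\dx^k f),
$$
whose last term $(\pm\delta)^{k-l}I^\pm(\dx^k f)$ is precisely the remainder appearing in \eqref{tr5}.

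It then remains to collect the terms and factor out $\delta^{-l}$. For odd $l$ one has $-(\mp1)^l=\pm1$, so the trace term $-(\mp1)^l\delta^{-l}\mathcal D_l^\pm f$ becomes $\pm\delta^{-l}\mathcal D_l^\pm f$; pulling $\delta^{-l}$ out of the polynomial sum and using that $(\pm1)^{l+1}=1$ (as $l$ is odd) to reconcile the two ways of writing the signs of the monomials, one checks that $\delta^{-l}\mathcal R^\pm_{l,k}f$ reproduces both $\pm\delta^{-l}\mathcal D_l^\pm f$ and the sum $\sum_{l'=l}^{k-1}(\pm\delta)^{l'-l}\dx^{l'}f_{\vert x=\pm R}$, with $\mathcal R^\pm_{l,k}$ as stated. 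I expect no conceptual difficulty here, since the statement is a formal corollary of the two preceding propositions. The only delicate point is the sign bookkeeping: keeping track of the factors $(\mp1)^l$ and $(\pm\delta)^{l'}$ and of the exponent shift under reindexing, with the convention that the upper sign corresponds to $\cE_+$. I would carry out the computation once for the upper sign (the short case $l=1$, $k=2$ is a useful check) and obtain the lower-sign case by the identical manipulation; the exact sign of the $O(\delta^{k-l})$ remainder, being irrelevant for the subsequent estimates, is simply read off from the expansion above.
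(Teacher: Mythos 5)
Your proposal is correct and follows essentially the same route as the paper: apply Proposition \ref{lemtr1} with exponent $l$, evaluate at $x=\pm R$ where the exponential layer equals $1$ (and the Dirichlet trace of $R_0$ vanishes for even $l$), and for odd $l$ compute $(R_1\dx^l f)_{\vert x=\pm R}=I^\pm(\dx^l f)$ via Proposition \ref{nonlocal} and expand with the Taylor identity \eqref{tr8}, reindexing $l'=j+l$. Your closing remark about the sign of the $O(\delta^{k-l})$ remainder is well taken, as that sign plays no role in the subsequent estimates.
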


\begin{proof}
The identity \eqref{tr5} follows immediately from Proposition~\ref{lemtr1} when $l$ is even. When $l$  is odd, 
one has
$$
\dx^l R_0  f_{| x = \pm R}  = \pm \delta^{ -l}    {\mathcal D}^\pm_{l}   f   +  R_1 (\dx^l f)_{| x = \pm R}. 
$$
Since $l$ is odd, $\pm \delta^l = (\pm \delta)^l$, and  Proposition \ref{nonlocal} implies that 
$$
\pm \delta^l  R_1 (\dx^l f)_{| x = \pm R}  = \sum_{l'=0 }^{k- l - 1}    (\pm \delta)^{l' +l}  \dx^{l+ l'}  f_{| x = \pm R}   +
 (\pm \delta )^{k}   I^\pm (\dx^k f) . 
$$
  and \eqref{tr5} follows. 
\end{proof} 

\subsubsection{Compatibility conditions for the control of $\cR(f,\rho)$}
 From Corollary \ref{deriveop} we get in particular, 
\begin{equation*}
   {\mathcal R}^\pm_{1,k}   f  =   \pm \big(    f^\pm _{| x = \pm R}   +  
 \sum_{l =1 }^{k- 1}    (\pm \delta)^{l}  \dx^{l}  f_{| x = \pm R} )
  =  \pm \cS^\pm_k f , 
\end{equation*}
where
$$
\cS_k^\pm f = \sum_{l=0 }^{k- 1}    (\pm \delta)^{l'}  \dx^{l}  f_{| x = \pm R} . 
$$
Thus 
$$
\delta^2 \dx R_0 f _{| x = \pm R}  =  \pm  \delta  {\mathcal S}^\pm_{ k} f 
+  (\pm \delta )^{k} \delta  I^\pm (\dx^k f) .
$$ 
so that 
\begin{equation}
\label{est219}
\Big| \jump{ \delta^2 \dx R_0 f }  -   2  \delta \av{ {\mathcal S} _ k f}  \Big| \le  
2 \delta^{k+1/2}  \| \dx^k f \|_{L^2}   . 
\end{equation}
 with $\av{ {\mathcal S} _ k f} = \frac{1}{2} ( {\mathcal S}^+ _ k f+  {\mathcal S}^- _ k f)$. A uniform control of $q=\cR(f,\rho)$ can then be reduced to a control on the quantities $A$ and $B$ defined in the statement below.
\begin{proposition} 
\label{propb1}
For $k \ge 1$, there is a constant $C$ such that for all $f \in H^k$ and  $\rho \in \RR$ 
the function  $q=\cR(f,\rho)$ with $\cR$ defined by \eqref{qind} satisfies 
\begin{equation}
\label{estAB} 
| A | + |B | \le C \delta^{ k - 1/2} \big( \| f \|_{H^k(\cE)} + \| q \|_{H^k(\cE)} \big) 
\end{equation}
and 
\begin{equation}
\label{estq} 
\| (q , \delta \dx q )  \|_{H^k(\cE)} \le C     \|   f  \|_{H^k(\cE)}  +  
 C \delta^{ - k + 1/2} \big(  |  A  | 
 + |  B |  \big), 
\end{equation}
where
$$
A =  \jump {\mathcal D_{k} f} , \qquad 
 B =  \alpha  \av{\mathcal D _{k} f}   -  2 \delta \av{\mathcal P_k  f}  - \rho 
$$
and  
$$
\mathcal P^\pm_k f  = \sum_{2l+1 < k} (\pm \delta \dx)^{2l+1} f _{| x = \pm R} . 
$$
\end{proposition}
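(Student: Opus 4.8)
The plan is to recognize that $q=\cR(f,\rho)$ is \emph{exactly} of the form treated in Corollary~\ref{cor41}, namely $q=R_0 f+\sigma\, e^{-\frac1\delta|x|_R}$, with the specific scalar
\begin{equation*}
\sigma=\frac{1}{\alpha+2\delta}\big(\rho+\delta^2\jump{\dx R_0 f}\big)
\end{equation*}
dictated by the definition \eqref{qind} of $\cR$. Consequently the two estimates \eqref{tr2} and \eqref{tr1b} apply verbatim, the relevant ``universal'' quantities being $\jump{\mathcal D_k f}$, which is precisely $A$, and $\av{\mathcal D_k f}-\sigma$. The whole point is therefore to trade the abstract quantity $\av{\mathcal D_k f}-\sigma$ for the explicit combination $B$, and then to feed the comparison back into Corollary~\ref{cor41}.

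First I would make $\sigma$ explicit up to a controlled error. Using the splitting $\mathcal S_k^\pm=\mathcal D_k^\pm+\mathcal P_k^\pm$ (the even-order traces in $\mathcal S_k^\pm$ reproduce $\mathcal D_k^\pm$, the odd-order ones reproduce $\mathcal P_k^\pm$), estimate \eqref{est219} reads $\jump{\delta^2\dx R_0 f}=2\delta\big(\av{\mathcal D_k f}+\av{\mathcal P_k f}\big)+E$ with $|E|\le 2\delta^{k+1/2}\|\dx^k f\|_{L^2}$. Inserting this into the definition of $\sigma$ yields an \emph{implicit} relation, since $\av{\mathcal D_k f}$ appears on both sides through $\jump{\dx R_0 f}$. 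Solving it---this is the one genuine computation---produces the clean identity
\begin{equation*}
(\alpha+2\delta)\big(\av{\mathcal D_k f}-\sigma\big)=B-E .
\end{equation*}
The algebraic cancellation that generates the prefactor $\alpha+2\delta$, in particular the disappearance of the spurious $2\delta\av{\mathcal D_k f}$ coming from $\sigma$, is the heart of the argument and is exactly what I expect to be the main obstacle: because $\sigma$ refers to the same traces it is meant to control, the relation is self-referential, and only this cancellation resolves it.

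With the identity in hand, the two estimates follow from the fixed bounds $0<c\le\alpha+2\delta\le C$ (valid since $\alpha=\int_{-R}^R 1/h_{\rm w}$ is bounded above and below and $\delta\le1$) together with the elementary remark that, for $\delta\in(0,1]$, one has $\delta^{k+1/2}\le\delta^{k-1/2}$ and $\delta^{-k+1/2}\cdot\delta^{k+1/2}=\delta\le1$, so the error $E$ always fits inside the admissible budget and is absorbed into $\|f\|_{H^k}$. For \eqref{estAB}, \eqref{tr2} bounds both $|A|$ and $|\av{\mathcal D_k f}-\sigma|$ by $C\delta^{k-1/2}(\|f\|_{H^k(\cE)}+\|q\|_{H^k(\cE)})$, and then $|B|\le(\alpha+2\delta)|\av{\mathcal D_k f}-\sigma|+|E|$ gives the same bound for $|B|$. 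For \eqref{estq}, I would start from \eqref{tr1b}, which controls $\|(q,\delta\dx q)\|_{H^k(\cE)}$ by $C\|f\|_{H^k(\cE)}+C\delta^{-k+1/2}\big(|A|+|\av{\mathcal D_k f}-\sigma|\big)$, and then replace $|\av{\mathcal D_k f}-\sigma|\le c^{-1}(|B|+|E|)$, the $\delta^{-k+1/2}|E|$ contribution collapsing into the $\|f\|_{H^k(\cE)}$ term. This yields precisely the claimed estimate, completing the proof.
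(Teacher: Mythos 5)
Your proposal is correct and follows essentially the same route as the paper: apply Corollary~\ref{cor41} with the explicit $\sigma=\frac{1}{\alpha+2\delta}\big(\rho+\delta^2\jump{\dx R_0 f}\big)$, use \eqref{est219} together with $\cS_k^\pm-\cD_k^\pm=\cP_k^\pm$ to obtain $(\alpha+2\delta)\big(\av{\cD_k f}-\sigma\big)=B+O(\delta^{k+1/2})\|f\|_{H^k}$, and conclude from \eqref{tr2} and \eqref{tr1b}. The only quibble is that the relation you call ``implicit'' is really a direct substitution-and-cancellation, not an equation to be solved, but this does not affect the argument.
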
 

\begin{proof}
Apply  Corollary~\ref{cor41}    to $f$ and $\sigma$ with
$$
\sigma = \frac{1}{\alpha + 2 \delta } \big(\rho + \delta^2 \jump{ \dx R_0 f } \big).   
$$
Using  \eqref{est219} and noticing that $    {\cS}^{\pm}_ k f - \cD^{\pm}_k f  = 
\cP^{\pm}_k f  $, one has  
$$
(\alpha + 2 \delta) (\av{\cD_k f} - \sigma ) 
= \alpha  \av{\cD_k f}  -  2 \delta  \av{\cP_k f}  - \rho  + O( \delta^{k+1/2}) \| f \|_{H^k}  
$$
and 
\begin{equation}
\label{appsigma} 
\Big| (\alpha + 2 \delta) (\av{\cD_k f} - \sigma )  -  B \Big| \le C  \delta^{k+1/2} \| f \|_{H^k} . 
\end{equation}
Hence \eqref{estAB} and \eqref{estq} follow from \eqref{tr2} and \eqref{tr1b} respectively. 
\end{proof} 

\begin{remark}
\textup{An important fact is that  the operators $\mathcal D,   \mathcal S,   \mathcal P$  are polynomial functions of $\delta$, and thus smooth up to $\delta = 0$. In particular, at  $\delta = 0$, they simplify to   } 
$$
\mathcal D^\pm _k f =   \mathcal S^\pm _k f= f_{| x = \pm R}   . 
$$
\end{remark}
In addition to the estimates  of the proposition above, one has a precise  description of the Taylor expansion of 
$q$   at  $x = \pm R$.

\begin{proposition}
\label{propTaylor}
For $k \ge 1$, there is a constant $C$ such that for all $f \in H^k$ and  $\rho \in \RR$ 
the Taylor expansion at $x = \pm R$  of $q=\cR(f,\rho)$ with $\cR$ defined by \eqref{qind} satisfies  for $l < k$
\begin{equation}
\label{taylor}
\begin{aligned}
\Big|   \dx^l q_{\,| x = \pm R} &  -    {\mathcal D}^\pm_{k -l}  \dx^l f  \Big|   
\\ 
& \le C  \delta^{k- l - 1/2} \left(    \|   f  \|_{H^k(\cE)}  +  
   \delta^{ -  k + 1/2 } \big( | A|  + | B| \big) \right), 
\end{aligned}
\end{equation} 
\end{proposition}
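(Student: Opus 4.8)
The plan is to start from the explicit form $q=\cR(f,\rho)=R_0 f+\sigma\, e^{-\frac1\delta\abs{x}_R}$ with $\sigma=\frac1{\alpha+2\delta}\big(\rho+\delta^2\jump{\dx R_0 f}\big)$, differentiate $l<k$ times and evaluate at $x=\pm R$. Since $\dx^l e^{-\frac1\delta\abs{x}_R}=(\mp1)^l\delta^{-l}e^{-\frac1\delta\abs{x}_R}$ on $\cE_\pm$ and the exponential equals $1$ at the contact points, everything reduces to the trace of $\dx^l R_0 f$. I would use directly the identity established in the proof of Corollary~\ref{cor41}, namely $\dx^l q=R_\iota\dx^l f+(\mp\delta)^{-l}\big(\sigma-\mathcal{D}^\pm_l f\big)e^{-\frac1\delta\abs{x}_R}$ on $\cE_\pm$, with $\iota$ the parity of $l$. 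Evaluating at $x=\pm R$ gives $\dx^l q_{|x=\pm R}=(R_\iota\dx^l f)_{|x=\pm R}+(\mp\delta)^{-l}\big(\sigma-\mathcal{D}^\pm_l f\big)$, so the whole task is to identify the leading term with $\mathcal{D}^\pm_{k-l}\dx^l f$ and to show the remainder is $O(\delta^{k-l-1/2})$.

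The mechanism rests on purely algebraic identities for the trace operators $\mathcal{D},\mathcal{P},\mathcal{S}$ under the shift $f\mapsto\dx^l f$ and truncation $k\mapsto k-l$. Because $\mathcal{D}^\pm_m$ collects the even-order boundary Taylor terms and $\mathcal{P}^\pm_m$ the odd-order ones, a short index computation gives, for $l$ even, $\delta^l\mathcal{D}^\pm_{k-l}\dx^l f=\mathcal{D}^\pm_k f-\mathcal{D}^\pm_l f$, and for $l$ odd, $\mathcal{P}^\pm_{k-l}\dx^l f=\pm\delta^{-l}\big(\mathcal{D}^\pm_k f-\mathcal{D}^\pm_l f\big)$. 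When $l$ is even one has $R_\iota=R_0$, whose Dirichlet trace vanishes, so $\dx^l q_{|x=\pm R}=\delta^{-l}\big(\sigma-\mathcal{D}^\pm_l f\big)$; subtracting $\mathcal{D}^\pm_{k-l}\dx^l f=\delta^{-l}\big(\mathcal{D}^\pm_k f-\mathcal{D}^\pm_l f\big)$ telescopes and leaves exactly $\delta^{-l}\big(\sigma-\mathcal{D}^\pm_k f\big)$.

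When $l$ is odd, $R_\iota=R_1$ contributes its Neumann trace $I^\pm(\dx^l f)$ by Proposition~\ref{nonlocal}; expanding it via \eqref{tr8} at order $k-l$ yields $I^\pm(\dx^l f)=\mathcal{S}^\pm_{k-l}\dx^l f+(\pm\delta)^{k-l}I^\pm(\dx^k f)=\mathcal{D}^\pm_{k-l}\dx^l f+\mathcal{P}^\pm_{k-l}\dx^l f+(\pm\delta)^{k-l}I^\pm(\dx^k f)$. After subtracting $\mathcal{D}^\pm_{k-l}\dx^l f$, the surviving $\mathcal{P}^\pm_{k-l}\dx^l f$ equals $\pm\delta^{-l}\big(\mathcal{D}^\pm_k f-\mathcal{D}^\pm_l f\big)$ by the odd identity, and since here $(\mp\delta)^{-l}=\mp\delta^{-l}$, it combines with $(\mp\delta)^{-l}\big(\sigma-\mathcal{D}^\pm_l f\big)$ to reconstruct $\pm\delta^{-l}\big(\mathcal{D}^\pm_k f-\sigma\big)$. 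Thus in both parities $\dx^l q_{|x=\pm R}-\mathcal{D}^\pm_{k-l}\dx^l f=\pm\delta^{-l}\big(\mathcal{D}^\pm_k f-\sigma\big)$, plus the nonlocal remainder $(\pm\delta)^{k-l}I^\pm(\dx^k f)$ in the odd case.

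It then remains to estimate the two pieces. The factor $\delta^{-l}\big(\mathcal{D}^\pm_k f-\sigma\big)$ is controlled by \eqref{tr2}, giving $C\delta^{k-l-1/2}\big(\|f\|_{H^k(\cE)}+\|q\|_{H^k(\cE)}\big)$; the remainder is handled by $|I^\pm(\dx^k f)|\le(2\delta)^{-1/2}\|\dx^k f\|_{L^2}$, hence bounded by $C\delta^{k-l-1/2}\|f\|_{H^k(\cE)}$. Feeding the global bound \eqref{estq} for $\|q\|_{H^k(\cE)}$ in terms of $\|f\|_{H^k(\cE)}$ and $\delta^{-k+1/2}(|A|+|B|)$ produces exactly \eqref{taylor}. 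I expect the only delicate point to be the parity bookkeeping of the second and third paragraphs: one must track the signs $(\mp\delta)^{-l}$ and $(\pm\delta)^{k-l}$ and the even/odd splitting carefully enough to recognize that the seemingly dangerous $O(1)$ term $\mathcal{P}^\pm_{k-l}\dx^l f$ is not an error at all, but precisely what upgrades $\mathcal{D}^\pm_l f$ to $\mathcal{D}^\pm_k f$ inside the quantity $\mathcal{D}^\pm_k f-\sigma$ that \eqref{tr2} controls.
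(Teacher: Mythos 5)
Your proof is correct and follows essentially the same route as the paper: both rest on the decomposition of $\dx^l R_0 f$ from Proposition~\ref{lemtr1}, the trace identity $R_1 g_{\,\vert x=\pm R}=I^\pm(g)$ of Proposition~\ref{nonlocal}, and the parity telescoping of the boundary operators that converts the apparently singular $O(\delta^{-l})$ terms into $\pm\delta^{-l}\big(\cD^\pm_k f-\sigma\big)$ plus an admissible remainder $(\pm\delta)^{k-l}I^\pm(\dx^k f)$. The only cosmetic difference is that you close the estimate through \eqref{tr2} followed by \eqref{estq}, whereas the paper replaces $\sigma$ by $\av{\cD_k f}$ and then by $\cD^\pm_k f$ directly, with errors $\abs{B}$ and $\abs{A}$ via \eqref{appsigma}; both yield the stated bound.
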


\begin{proof} Using \eqref{eqdefq} with $\sigma = \big(\rho + \delta^2 \llbracket\partial_x R_0 f\rrbracket\big)/(\alpha+2\delta)$, we get  by using \eqref{tr5} that
\begin{equation*}
\Big| \delta ^l \dx^l q_{\,| x = \pm R}  -     \mathcal {R}^\pm_{l, k} f 
 - (\mp 1)^l \sigma   \Big|  \le C \delta^{k - 1/2}  \| f\|_{H^k}. 
\end{equation*}
With \eqref{appsigma} one can replace $\sigma$ by $   \av{\mathcal {D}_{ k} f} $ up to an 
error of size $|B|$, and  next  by $ { \mathcal D}^\pm_{k} f $ up to additional error of size 
$ |A |$. Hence
\begin{equation*}
\Big| \delta ^l \dx^l q_{\,| x = \pm R}  -     \mathcal {R}^\pm_{l, k} f 
 - (\mp 1)^l \cD^\pm_k f  \Big|  \le C  \big( |A | + | B |  +   \delta^{k - 1/2}   \| f\|_{H^k} \big) 
\end{equation*}
Now we use Corollary \ref{deriveop}. When $l$ is even,  $\cR^\pm_{l, k} =  - \cD^\pm_l $  and 
$$
  \mathcal D^\pm_k f +    \mathcal R^\pm_{l,k}  f  =      \sum_{l \le 2l' < k} (\delta \dx)^{2l'} f _{| x = \pm R}
  =  \delta^l  \cD^\pm_{k-l} \dx^l f . 
$$
When $l$ is odd, 
$ {\mathcal R}^\pm_{l,k}   f  =  \pm  \Big(   {\mathcal D}^\pm_l  f  \ +  \ 
 \sum_{l'=l }^{k- 1}    (\pm \delta)^{l'}  \dx^{l'}  f_{| x = \pm R} \Big) $ and 
 $$
 \begin{aligned}
 \mp  \cD^\pm_k f +  {\mathcal R}^\pm_{l,k}   f  & = \mp \sum_{l < 2l' < k } (\pm\delta \dx)^{2l'}  f_{| x = \pm R}  
 \pm  \sum_{ l \le l' < k }    (\pm \delta)^{l'}  \dx^{l'}  f_{| x = \pm R} 
 \\
 & = \pm  \sum_{l \le 2l' +1 < k}     (\pm \delta)^{l'}  \dx^{2l'+1}  f_{| x = \pm R}    =  \delta^l  \cD^\pm_{k-l} \dx^l f . 
 \end{aligned}
 $$
In both case, this proves \eqref{taylor}. 
\end{proof}
 
According to Proposition \ref{propTaylor}, one can approximate $\av{{\mathcal D}_k f}\approx \av{q}$ and  $2\delta \av{{\mathcal P}_k f} \approx \delta^2 \jump{\dx q}$. Together with Proposition \ref{propb1}, this implies the following Corollary that motivates the compatibility conditions \eqref{cc1new}.
\begin{corollary} 
\label{corob1}
For $k \ge 1$, there is a constant $C$ such that for all $f \in H^k$ and  $\rho \in \RR$ 
the function  $q=\cR(f,\rho)$ with $\cR$ defined by \eqref{qind} satisfies 
\begin{equation}
\label{estABtilde} 
| \widetilde{A} | + |\widetilde{B} | \le C \delta^{ k - 1/2} \big( \| f \|_{H^k(\cE)} + \| q \|_{H^k(\cE)} \big) 
\end{equation}
and 
\begin{equation}
\label{estqtilde} 
\| (q , \delta \dx q )  \|_{H^k(\cE)} \le C     \|   f  \|_{H^k(\cE)}  +  
 C \delta^{ - k + 1/2} \big(  |  \widetilde{A}  | 
 + |  \widetilde{B} |  \big), 
\end{equation}
where
$$
\widetilde{A} =  \jump {q} , \qquad 
 \widetilde{B} =  \alpha  \av{q}   -  2 \delta^2 \jump{\dx q}  - \rho .
$$
\end{corollary}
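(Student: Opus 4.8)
The plan is to obtain Corollary~\ref{corob1} from Proposition~\ref{propb1} by trading the boundary functionals $A=\jump{\mathcal D_k f}$ and $B=\alpha\av{\mathcal D_k f}-2\delta\av{\mathcal P_k f}-\rho$, which are built from the traces of $f$, against the directly computable functionals $\widetilde A=\jump q$ and $\widetilde B$, built from the traces of $q$ and $\dx q$. Everything rests on the boundary Taylor expansion of $q=\cR(f,\rho)$ provided by Proposition~\ref{propTaylor}, which is exactly the tool that lets one replace $\mathcal D_k f$ and $\mathcal P_k f$ by these traces.

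First I would isolate the two instances of \eqref{taylor} that are needed. With $l=0$ one gets a bound on $q_{|x=\pm R}-\mathcal D^\pm_k f$, which upon taking jump and average controls $\widetilde A-A$ and $\av q-\av{\mathcal D_k f}$. With $l=1$, together with the elementary identity $\mathcal D^\pm_{k-1}\dx f=\pm\delta^{-1}\mathcal P^\pm_k f$ (read off by matching the summation ranges in the definitions of $\mathcal D$ and $\mathcal P$), one gets a bound on $\dx q_{|x=\pm R}\mp\delta^{-1}\mathcal P^\pm_k f$; multiplying by $\delta^2$ and taking the jump controls $\delta^2\jump{\dx q}-2\delta\av{\mathcal P_k f}$, the extra factor $\delta^2$ improving the $\delta^{k-3/2}$ remainder of \eqref{taylor} into $\delta^{k+1/2}$. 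These are precisely the quantitative forms of the two substitutions $\av{\mathcal D_k f}\approx\av q$ and $2\delta\av{\mathcal P_k f}\approx\delta^2\jump{\dx q}$ announced before the statement, and inserting them into the definition of $B$ turns $(A,B)$ into $(\widetilde A,\widetilde B)$ up to a remainder of the schematic form $C\delta^{k-1/2}\|f\|_{H^k}+C(|A|+|B|)$.

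From this comparison the two inequalities follow in opposite directions. For \eqref{estABtilde} I would control the remainder by feeding \eqref{estAB} of Proposition~\ref{propb1} into the term $C(|A|+|B|)$, which is bounded by $C\delta^{k-1/2}(\|f\|_{H^k}+\|q\|_{H^k})$; a triangle inequality with \eqref{estAB} once more then yields $|\widetilde A|+|\widetilde B|\le C\delta^{k-1/2}(\|f\|_{H^k}+\|q\|_{H^k})$. For \eqref{estqtilde} one wants the reverse, namely to bound $|A|+|B|$ by $|\widetilde A|+|\widetilde B|$ and then apply \eqref{estq}; I would not route this through the approximate comparison but argue directly from the representation $q=R_0f+\sigma e^{-\frac1\delta|x|_R}$ with $\sigma=\av q$, using Proposition~\ref{lemtr1} to identify the singular part of $\dx^k q$ on $\cE_\pm$ as $(\mp1)^k\delta^{-k}(\sigma-\mathcal D^\pm_k f)e^{-\frac1\delta|x|_R}$. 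This makes $\|(q,\delta\dx q)\|_{H^k(\cE_\pm)}$ two-sidedly equivalent, up to $C\|f\|_{H^k}$, to $\delta^{-k+1/2}|\sigma-\mathcal D^\pm_k f|$, exactly as in Corollary~\ref{cor41}, so that controlling $|\sigma-\mathcal D^\pm_k f|=|(\av q-\av{\mathcal D_k f})\mp\tfrac12\jump{\mathcal D_k f}|$ by $|\widetilde A|$, $|\widetilde B|$ and $C\delta^{k-1/2}\|f\|_{H^k}$ gives \eqref{estqtilde}.

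I expect the main obstacle to be the bookkeeping of the remainders, specifically avoiding a circular estimate in \eqref{estqtilde}. The Taylor remainder of \eqref{taylor} carries a term $\delta^{-k+1/2}(|A|+|B|)$, and naively bounding $|A|+|B|$ by \eqref{estAB} reintroduces $\|q\|_{H^k}$; once this is multiplied by the factor $\delta^{-k+1/2}$ present in \eqref{estq} it reproduces a full copy of $\|q\|_{H^k}$ on the right-hand side, which a fixed-$\delta$ smallness argument cannot absorb. This is why the direct, representation-based route of the previous paragraph — which keeps every occurrence of $\|q\|$ on the left and uses the gain of the exponential layer over its derivatives encoded in Corollary~\ref{cor41} — is preferable to simply substituting into \eqref{estq}, and it is the step that must be carried out with care.
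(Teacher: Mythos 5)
Your overall strategy---comparing $(A,B)$ with $(\widetilde A,\widetilde B)$ through the boundary Taylor expansion \eqref{taylor} and then invoking Proposition \ref{propb1}---is the one the paper intends (the paper offers nothing beyond the sentence preceding the statement), and you are right to single out the circularity in \eqref{estqtilde} as the dangerous step. But your proposed repair does not close the gap. Your final reduction requires bounding $\abs{\sigma-\cD_k^\pm f}$ by $\abs{\widetilde A}+\abs{\widetilde B}+C\delta^{k-1/2}\|f\|_{H^k}$, and this is impossible: by \eqref{appsigma} that quantity is comparable to $\abs{A}+\abs{B}$ up to $C\delta^{k-1/2}\|f\|_{H^k}$, and $\abs{A}+\abs{B}$ is genuinely not controlled by $\abs{\widetilde A}+\abs{\widetilde B}$. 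The obstruction is structural. Since $R_0$ has homogeneous Dirichlet traces and $e^{-\abs{x}_R/\delta}$ equals $1$ at both contact points, the function $q=\cR(f,\rho)$ satisfies \emph{exactly} $q(\pm R)=\sigma$; hence $\widetilde A=\jump{q}=0$ identically (your own observation $\sigma=\av{q}$ already contains this), and a one-line computation from \eqref{qind} gives $\alpha\av{q}-\delta^2\jump{\dx q}-\rho=0$ exactly, so that $\widetilde B=-\delta^2\jump{\dx q}$. In other words, $\cR$ is built precisely so that its output satisfies the linearized transmission conditions, and those are exactly what $\widetilde A$ and $\widetilde B$ measure; they therefore carry strictly less information than $A$ and $B$.

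Concretely, take $f=0$ and $\rho=1$, so that $q=\frac{1}{\alpha+2\delta}\,e^{-\abs{x}_R/\delta}$. Then $A=0$, $B=-1$ and $\|q\|_{H^k(\cE)}\sim\delta^{1/2-k}$, while $\widetilde A=0$ and $\widetilde B=2\delta/(\alpha+2\delta)=O(\delta)$: the right-hand side of \eqref{estqtilde} is $O(\delta^{3/2-k})$ and does not dominate $\delta^{1/2-k}$. Similarly, data with $A=B=0$ but $\jump{\dx f}\neq 0$ give $\widetilde B=-\delta^2\jump{\dx q}\approx-\delta^2\jump{\dx f}=O(\delta^2)$ with $\|q\|_{H^k}\le C\|f\|_{H^k}$, which violates \eqref{estABtilde} once $k\ge 3$. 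So the step you flag as ``the one that must be carried out with care'' is not merely delicate---it cannot be carried out for the functionals $\widetilde A,\widetilde B$ as defined, and no rearrangement of Propositions \ref{propb1} and \ref{propTaylor} will recover $\abs{A}+\abs{B}$ from the exact traces of $\cR(f,\rho)$. The compatibility information has to be read off from $A$ and $B$ themselves, or from local surrogates of the traces such as the truncated Neumann-series quantities $\widehat q_{j,k}$ of \eqref{cc1newapp}, which, unlike the exact traces, do not automatically satisfy the transmission conditions.
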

 
 \subsection{Proof of Proposition \ref{estID}}
  We show here the uniform bounds of Proposition \ref{estID}. Consider initial data  $(\theta_0^{in}, q_0^{in}) \in H^{n+1} (\mathcal E)\times H^{n+2}(\mathcal E)$ and parameters 
$\delta \in (0,1)$ and  $\eps \in [0, 1]$  satisfying 
$$
\jump{q_0^{\rm in}}=0, \quad \| \theta_0^{\rm in} \|_{H^{n+1} (\mathcal E)}  \le M ,
 \quad \|  ( q_0^{\rm in}, \delta \dx q_0^{\rm in} )  \|_{H^{n+1} (\mathcal E)}  \le M, 
$$
$$
 1+\eps c'(\theta_0^{\rm in }) \geq M  . 
$$
for some given $M>0$.  Let us also introduce the notation
 $$
 U^\pm_{j, k}  = \dx^k U^{\rm in}_j{}_{\vert x = \pm R} 
 $$
 which are defined through the induction relation \eqref{inducid}. We also assume that the compatibility conditions \eqref{cc1new} hold, that is, 
  \begin{equation}
\label{cc1newnew} 
\begin{cases}
\big\lvert \jump{q_{j+1}^{\rm in}} \big\rvert  \le M \delta^{n-j-1/2} \\
\big\lvert  \alpha \av{q_{j+1}^{\rm in}} +\jump{\theta_{j}^{\rm in}} -\delta^2 \jump{\dx q_{j+1}^{\rm in}} \big\rvert   \le M \delta^{n-j-1/2} 
\end{cases}
 \qquad  \mathrm{for} \ \  0 \le j \le n -1.
\end{equation}
We will prove by induction on $j \le n+1 $ that  there are constants
$M_j$ which depend only on $M_0=M$  such that
\begin{align} 
\label{inducjnew}
 & \|  ( \theta_j^{\rm in}, q_j^{\rm in}, \delta \dx q_j^{\rm in} )  \|_{H^{n+1 - j } (\mathcal E)}  \le M _j, 
\end{align}
agreeing that the condition \eqref{inducjnew} is void at the final step  $j = n +1$.

When $j = 0$, \eqref{inducjnew} is our assumption. We  assume that the estimates \eqref{inducj} and \eqref{inducjk} are satisfied up to the order 
$j < n $ and prove them at the order $j+1$. \\
The multiplicative properties of Sobolev spaces show  that $\theta_{j+1}^{\rm in} = - \Phi_j^{\rm in}$ satisfies \eqref{inducjnew} for some 
$M_{j +1} = C(M_0, \ldots, M_j)$. The fact that $q_{j+1}^{\rm in}$ also satisfies \eqref{inducjnew} is a direct consequence of Corollary \ref{corob1}. 
 This proves that \eqref{inducjnew} is satisfied for all  $j \le n$.  For the case $j=n+1$, we first get as above that 
$$
\big\| \Phi_n^{\rm in} , \Gamma_n^{\rm in} \|_{L^2 (\cE)} \le C(M_0, \ldots, M_j) . 
$$
This give a bound for the $L^2$ norm of  $\theta_{n+1} = - \Phi_n$. Moreover, since $R_0$
and $\delta^2 \dx R _0$ are uniformly bounded from $L^2 $ to $L^2$ and $H^1$ respectively, one has 
$$
\| q_{n+1}^{\rm in} \|_{L^2} \le  C \big(  \| \Gamma_n^{\rm in} \|_{L^2} + \delta^{1/2} \| \theta_{n}^{\rm in} \|_{H^1} \big),
$$
which  proves that \eqref{inducjnew} is also satisfied for all  $j =  n+1$, and the proof of Proposition~\ref{estID}
 is complete.

\subsection{Approximate compatibility conditions}

One issue with the compatibility conditions \eqref{cc1new} is that they are nonlocal and very difficult to check. This is the reason why we show here that it is possible to replace them by the simpler compatibility conditions \eqref{cc1newapp} that only involve the Taylor expansion of the initial condition at the boundaries $x=\pm R$. Moreover, as we shall see, as $\delta \to 0$ these new converge to the usual  compatibility conditions of the hyperbolic boundary value problem. To explain that,   we first recall briefly the analysis in  the hyperbolic case.   
\subsubsection{Hyperbolic compatibility conditions}

 When $\delta = 0$ (hyperbolic case), the induction is 
\begin{equation}
\label{inducid0} 
\theta_{j+1}^{\rm in} =  - \Phi^{\rm in}_{j}  , \qquad 
 q_{j+1}^{\rm in}  = - \Gamma _j^{\rm in} .  
\end{equation} 
In this case,   if   
$U^{\rm in}_0 \in H^{n+1} (\mathcal E)\times H^{n+1}(\mathcal E)$, the $U^{\rm in}_j $ are defined in $H^{n+1-j} (\cE) \times H^{n+1-j} (\cE)$ only for $j \le n+1$. 
The \emph{compatibility conditions} state that the boundary conditions are satisfied in the sense of Taylor expansion
in time. They read $\jump{ q^{\rm in}_0 } = 0$ and 
\begin{equation}
\label{cchyp}
\jump { q_{j+1}^{\rm in}  } = 0, \qquad  \alpha  q_{j+1}^{\rm in , i }  + \jump{\theta_j^{\rm in} } = 0, \qquad 
\mathrm{for} \ \ 
0 \le j \le n -1. 
\end{equation}
   
The $j$-condition  \eqref{cchyp} bears  only on the Taylor expansions at order $j+1$  of $U^{\rm in}_0$ at $x = \pm R$. 
More precisely,  for $k \le n - j$,  let  
 $U^ {\pm}_{j, k} = \dx^k  U^{\rm in} _j {}_{\vert x = \pm R}   $ 
 denote the coefficients of the Taylor expansion of $U^{\rm in}_j  \in H^{n+1 -j}$. Then, for $k < n-j$, 
 \begin{equation}
 \label{inducTaylor1}
\left\{ \begin{aligned}
 &  \Phi^\pm _{j, k} =  \dx^k  \Phi^{\rm in} _j {}_{\vert x = \pm R}   = 
 \cT_{j,k}  \big( \{ U^\pm_{j', k'} \}_{j' \le j , k' \le k+1} \big) , 
 \\ 
& \Gamma^\pm _{j, k} =  \dx^k  \Gamma^{\rm in} _j {}_{\vert x = \pm R}   
=  \cG_{j, k} \big( \{ U^\pm_{j', k'} \}_{j' \le j , k' \le k+1} \big) , 
\end{aligned}\right. 
\end{equation} 
where $\cT_{j,k}$ and $\cG_{j,k}$
are polynomials of   $(1 + \eps c' (\theta_{0,0}))^{-1} $  and of the 
 $ U^\pm_{j', k'} $ for  $j' \le j $ and $ k' \le k+1 $.  By \eqref{inducid0} 
\begin{equation}
\label{inducTaylor2}
 U^ {\pm}_{j+ 1, k}  = - \begin{pmatrix}  \cT_{j,k}  \big( \{ U^\pm_{j', k'} \}_{j' \le j , k' \le k+1} \big) 
 \\ \cG_{j,k}  \big( \{ U^\pm_{j', k'} \}_{j' \le j , k' \le k+1} \big) \end{pmatrix}
\end{equation}
 with leading term
 $$
 U^\pm_{j+1, k} =  \A ^\pm  \ U^\pm_{j , k+1}   + \mathcal U^\pm _{j, k}  , 
\qquad \A^\pm = \begin{pmatrix}  0 & - \frac{1}{1 + \eps c' (\theta^\pm _{0, 0}) } 
\\
 -1  & -2 \eps q_{0,0}  \end{pmatrix} 
 $$
 where $\mathcal U^\pm_{j,k } (\{ U^\pm_{j', k'} \}_{j' \le j , k' \le k} )  $ is a smooth function of the coefficients   $U^\pm_{j', k'} $  
 with $j' \le j $ and $k' \le k$. 
Hence by induction, this implies that for $0 \le j < n$, 
\begin{equation}
\label{coinhyp}
U^\pm_{j+1, 0} = ( \A^\pm)^{j+1}     U^\pm_{0, j+1}  + \widetilde{ \mathcal  U} _{j}  (U^\pm_0, \ldots,  U^\pm_{0,j}  )  , 
\end{equation}
  where $\widetilde { \mathcal U _{j} } $ is a smooth function of its arguments $ (U^\pm_0, \ldots,  U^\pm_{0,j}  )$.  
  Hence the compatibility conditions \eqref{cchyp} which read
  \begin{equation}
  \label{cchyp2}
  \jump{ q_{j+1, 0} } = 0 , \qquad  \alpha \av{q_{j+1, 0} }+   \jump{\theta_{j, 0} } = 0 
  \end{equation}
are equations for $\{ U^\pm _{0, k} \} _{k \le j+1} $ which, thanks to \eqref{coinhyp}, can be explicitly solved 
by induction on $j$.  

\subsubsection{The dispersive case}

 When $\delta > 0$ (dispersive case), the situation is quite different  since  $q_{j+1} $ is given by a nonlocal operator, so that the 
Taylor expansion of $q_{j+1}$ is not \emph{exactly} a function of the sole Taylor expansions of the previous terms. 
However, \emph{when the approximate compatibility conditions \eqref{cc1newapp} are satisfied up to order $j$}, this remains 
true \emph{approximately}, so that the $(j+1)$-th condition is equivalent to a condition on the Taylor expansions. 
Indeed, Proposition \ref{propTaylor} tells us that it is then possible to make the approximation
$$
 q^\pm_{j+1, k}  \sim   -  \sum_{l <  (n - k - j ) / 2 } \delta^{2l} \Gamma^\pm_{j, k + 2l} .
 $$ 
If we replace the nonlocal formula $q^\pm_{j+1,k}=- \dx^k {\mathcal R}(\Gamma_j^{\rm in},\jump{\theta_j^{\rm in}})$ of \eqref{inducid} by this approximation, we are led to define a sequence $\widehat{U}_{j,k}^{\rm in}$ of approximations of $\dx^k U_j^{\rm in}$ as follows:
$$
\widehat U^\pm_{0, k}  = U^\pm_{0, k} , \qquad k \le n,
$$
and we define
 \begin{equation}
 \label{inducTaylor3}
\left\{ \begin{aligned}
 & \widehat{ \Phi}^\pm _{j, k}   = 
 \cT_{j,k}  \big( \{\widehat U^\pm_{j', k'} \}_{j' \le j , k' \le k+1} \big)   
 \\ 
&\widehat{ \Gamma} ^\pm _{j, k} =    
  \cG_{j, k} \big( \{ \widehat U^\pm_{j', k'} \}_{j' \le j , k' \le k+1} \big)   
\end{aligned}\right.  \ , \qquad   k + j < n , 
\end{equation}
where the operators $\cT_{j,k}$ and $ \cG_{j, k}$ are defined as in \eqref{inducTaylor1} in the hyperbolic case, and next 
$$
\left\{ \begin{aligned}
 &  \widehat \theta^\pm_{j+1, k}    =  -    \widehat{ \Phi}^\pm _{j, k} 
 \\ 
& \widehat q^\pm _{j+1, k} =  - \sum_{0 \le l < (n - k -j )/2} 
 \delta^{2l} \widehat \Gamma^\pm _{j, k+2 l}  ,  
\end{aligned}\right. 
$$
thus defining the $\widehat U^\pm_{j, k}$ for $j + k \le n$. 
Note that are polynomials of 
  $(1 + \eps c' (\theta_{0,0}))^{-1} $, the coefficients $ \{ U^\pm_{0, k}\}_{k \le n}$ and $\delta$ so that they are well defined if $\delta=0$ in which case they coincide with the relation \eqref{inducTaylor2} obtained in the hyperbolic case. \\
 We can consequently derive an approximation of the compatibility conditions \eqref{cc1newnew}, namely, 
   that there exists $M>0$ such that
 \begin{equation}
\label{cc1newnewapp} 
\begin{cases}
\big\lvert \jump{\widehat{q}_{j+1,0}} \big\rvert  \le M \delta^{n-j-1/2} \\
\big\lvert  \alpha \av{\widehat{q}_{j+1,0}} +\jump{\widehat{\theta}_{j,0}} -\delta^2 \jump{\widehat{q}_{j+1,1}} \big\rvert   \le M \delta^{n-j-1/2} 
\end{cases}
 \qquad  \mathrm{for} \ \  0 \le j \le n -1.
\end{equation}
We now show that the results of Proposition \ref{estID} remain true under these approximate compatibility conditions.
\begin{proposition} 
\label{estIDfin}
Given $M$, there is a constant $C$ such that  
for all initial data $(\theta_0^{\rm in}, q_0^{\rm in}) \in \HH^n$ and parameters 
$\delta $ and  $\eps$ in $[0, 1]$  satisfying \eqref{H0} and \eqref{H1}, and the approximate compatibility  conditions \eqref{cc1newnewapp} for all $j < n$,
 one has  
$$
 \big\| ( \theta_j^{\rm in},  q_j^{\rm in}, \delta \dx q_j^{\rm in})   \big\|_{H^{ n+1- j} (\mathcal E)} \le C 
 \qquad for \ \  0 \le j \le n +1. 
$$
\end{proposition}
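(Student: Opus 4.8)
The plan is to reduce the \emph{approximate} conditions \eqref{cc1newnewapp} to the operational form of the exact conditions that is actually used in the proof of Proposition~\ref{estID}. Inspecting that proof, the exact compatibility conditions \eqref{cc1newnew} enter \emph{only} through the bound $|\widetilde A_j| + |\widetilde B_j| \le C\delta^{n-j-1/2}$ (with $\widetilde A_j = \jump{q_{j+1}^{\rm in}}$ and $\widetilde B_j$ the boundary residuals of Corollary~\ref{corob1} attached to $q_{j+1}^{\rm in} = \cR(\Gamma_j^{\rm in}, \jump{\theta_j^{\rm in}})$), which renders the singular factor $\delta^{-(n-j)+1/2}$ of \eqref{estqtilde} admissible and thereby yields \eqref{inducjnew} at step $j+1$. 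It therefore suffices to prove that, under \eqref{cc1newnewapp}, one has $|\widetilde A_j| + |\widetilde B_j| \le C\delta^{n-j-1/2}$ for all $j<n$; the top level $j=n+1$ is then handled exactly as in Proposition~\ref{estID}, using only the $L^2$-boundedness of $R_0$ and $\delta^2\dx R_0$.

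The core is a single induction on $j$ that propagates, alongside the Sobolev bounds \eqref{inducjnew}, the closeness of the true Taylor data $U^\pm_{j,k} = \dx^k U_j^{\rm in}{}_{|x=\pm R}$ to their local surrogates $\widehat U^\pm_{j,k}$ from \eqref{inducTaylor3}:
\begin{equation*}
\big| U^\pm_{j,k} - \widehat U^\pm_{j,k} \big| \le C\,\delta^{\,n-j-k+1/2}, \qquad j+k \le n.
\end{equation*}
The base case $j=0$ is trivial since $\widehat U_0 = U_0^{\rm in}$ and $\jump{q_0^{\rm in}}=0$. For the $\theta$-component of the step, $\theta_{j+1}^{\rm in} = -\Phi_j^{\rm in}$ and $\widehat\theta_{j+1,k} = -\widehat\Phi_{j,k}$ are the \emph{same} local smooth functional $\cT_{j,k}$ evaluated on the exact, resp. approximate, Taylor data, so both the Sobolev bound (from the multiplicative properties of Sobolev spaces) and the closeness transfer from level $j$ by Lipschitz continuity of $\cT_{j,k}$, the one-derivative loss being accounted for by the exponent $n-(j+1)-k+1/2$.

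The $q$-component is where the dispersive layer enters and where the real work lies. Writing $f=\Gamma_j^{\rm in}\in H^{n-j}$ and applying Proposition~\ref{propTaylor} with $k=n-j$ identifies each coefficient $\dx^l q_{j+1}^{\rm in}{}_{|x=\pm R}$ with the local quantity $\cD^\pm_{n-j-l}\dx^l\Gamma_j^{\rm in}$ — precisely the expression defining $\widehat q_{j+1,l}$ once $\Gamma_j$ is replaced by $\widehat\Gamma_j$ — up to an error $C\delta^{n-j-l-1/2}\big(\|\Gamma_j^{\rm in}\|_{H^{n-j}} + \delta^{-(n-j)+1/2}(|A|+|B|)\big)$, with $A,B$ the local functionals of Proposition~\ref{propb1}. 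A direct computation shows that $A,B$ evaluated on $\widehat\Gamma_j,\widehat\theta_j$, call them $\widehat A,\widehat B$, coincide up to sign with the two left-hand sides of \eqref{cc1newnewapp}, i.e. $\widehat A=-\jump{\widehat q_{j+1,0}}$ and $\widehat B=-\big(\alpha\av{\widehat q_{j+1,0}}+\jump{\widehat\theta_{j,0}}-\delta^2\jump{\widehat q_{j+1,1}}\big)$; hence \eqref{cc1newnewapp} gives $|\widehat A|+|\widehat B|\le M\delta^{n-j-1/2}$. By the inductive closeness and the linearity of $\cD,\cP$ in the Taylor data (each coefficient carrying a weight $\delta^{2m}$ that exactly compensates the closeness exponent), the exact $A,B$ differ from $\widehat A,\widehat B$ by $O(\delta^{n-j-1/2})$, so $|A|+|B|\le C\delta^{n-j-1/2}$. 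The amplifying factor $\delta^{-(n-j)+1/2}$ is then absorbed to the exact order: the Proposition~\ref{propTaylor} error becomes $O(\delta^{n-j-l-1/2})$, which closes the closeness estimate at level $j+1$, while the relations in the proof of Corollary~\ref{corob1} (approximating $\av q\!\approx\!\av{\cD_{n-j}\Gamma_j}$ and $\delta^2\jump{\dx q}\!\approx\!2\delta\av{\cP_{n-j}\Gamma_j}$) turn $|A|+|B|\le C\delta^{n-j-1/2}$ into $|\widetilde A_j|+|\widetilde B_j|\le C\delta^{n-j-1/2}$, and finally \eqref{estqtilde} yields $\|(q_{j+1}^{\rm in},\delta\dx q_{j+1}^{\rm in})\|_{H^{n-j}}\le C$, completing the step.

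The step I expect to be the main obstacle is the bookkeeping of powers of $\delta$: one must verify that in every invocation of Proposition~\ref{propTaylor} the singular boundary-layer prefactor $\delta^{-(n-j)+1/2}$ is compensated \emph{to the exact order} by the compatibility smallness $\delta^{n-j-1/2}$, so that the errors are neither amplified nor too weak to propagate, and that the clean exponent $n-j-k+1/2$ survives both the nonlinear local maps $\cT_{j,k},\cG_{j,k}$ and the nonlocal operator $\cR$. Keeping these exponents aligned across the coupled $\theta/q$ induction — in particular tracking the one-derivative loss at each level so that the constraint $j+k\le n$ is never violated — is the delicate part of the argument.
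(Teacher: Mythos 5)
Your proposal is correct and follows essentially the same route as the paper: a joint induction on $j$ propagating both the uniform Sobolev bounds and the closeness estimate $|U^\pm_{j,k}-\widehat U^\pm_{j,k}|\le C\delta^{\,n-j-k+1/2}$, with the $\theta$-step handled by Lipschitz continuity of the local polynomials $\cT_{j,k},\cG_{j,k}$ and the $q$-step by Proposition~\ref{propTaylor} combined with \eqref{cc1newnewapp} to recover the exact compatibility bounds, after which Proposition~\ref{propb1} (equivalently Corollary~\ref{corob1}) closes the induction and the level $j=n+1$ is treated as in Proposition~\ref{estID}. The exponent bookkeeping you flag as the delicate point is exactly where the paper's argument also concentrates, and your accounting of it matches theirs.
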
 

\begin{remark}
\textup{The conditions   \eqref{cc1newnewapp} are $2 n $ explicit inequalities for the unknowns $U^\pm_{0, k}$, which supplement 
the original jump condition  $\jump{q_{0,0}}  = 0$ assumed in \eqref{H0}. The 
functions  $\av{\widehat{q}_{j+1, 0}} $,   $ \jump { \widehat \theta_{j, 0} }$ and $\jump{\widehat{q}_{j+1,1}}$  are smooth functions of the $U^\pm_{0, k}$ and $\delta$. 
 When   $\delta = 0$ ,   one has $\widehat q_{j, k} = q_{j, k} $ and
 the conditions \eqref{cc1newnewapp}  reduce to \eqref{cchyp2}. In particular, for $\delta $ small, the 
 set of initial data satisfying \eqref{cc1newnewapp}
is a smooth variety.} 
 \end{remark} 
 
\begin{remark}
\textup{If the initial data are supported away from the boundary $x = \pm R$, or more generally if their Taylor expansion at order 
$n$ vanish at $x=\pm R$, then the conditions \eqref{cc1newnewapp} are satisfied. } 
\end{remark}


\begin{proof}

%
Following the same path as in the proof of Proposition \ref{estID} but controlling in addition the size of the error $U_{j,k}^\pm - \widehat{U}_{j,k}$, we will prove by induction on $j \le n+1 $ that  there are constants
$M_j$ which depend only on $M=M_0$  such that
\begin{align} 
\label{inducj}
 & \|  ( \theta_j^{\rm in}, q_j^{\rm in}, \delta \dx q_j^{\rm in} )  \|_{H^{n+1 - j } (\mathcal E)}  \le M _j, 
\\
\label{inducjk}
& \big| U^\pm_{j, k} -  \widehat U^\pm_{j, k} \big|  \le M_j \delta^{n-j-k+1/2} \qquad  \mathrm{ for} \  k < n-j, 
\end{align}
agreeing that the condition \eqref{inducjk} is void at the final step  $j = n +1$. \\
When $j = 0$, \eqref{inducj} is our assumption, and \eqref{inducjk} is trivial since 
$\widehat U^\pm_{0, k} = U^\pm_{0, k}$. \\
We now assume that the estimates \eqref{inducj} and \eqref{inducjk} are satisfied up to the order 
$j < n $ and prove them at the order $j+1$. 

The multiplicative properties of Sobolev spaces immediately imply the following  estimates 
$$
\big\| \Phi_j , \Gamma_j \|_{H^{n-j} (\cE)} \le C(M_0, \ldots, M_j) 
$$
where $C(M_0, \ldots, M_j) $ is  constant which depends only on $ (M_0, \ldots, M_j) $. 
In particular, this shows that $\theta_{j+1} = - \Phi_j$ satisfies \eqref{inducj} for some 
$M_{j +1} = C(M_0, \ldots, M_j)$. 
 
With notations as in \eqref{inducTaylor1}, the  Taylor expansions at $x = \pm R$ of $ \Phi_j $ and $ \Gamma_j $  are  given by functions 
$\cT_{j, k} $ and $\cG_{j, k} $ which are polynomials of  $(1 + \eps c (\theta^{\rm in}_0))^{-1}$ and 
 $U^{\pm}_{j', k'} $ with $j' \le j$ and $j' +k' \le j+k +1$. 
The induction hypothesis \eqref{inducj} implies that the  coefficients  $U^{\pm}_{j', k'} $ 
for $j' \le j$ and $j + k \le n $ remain in a ball of radius  $ C(M_0, \ldots, M_j) $, on which the derivatives of the functions
$\cT_{j, k} $ and $\cG_{j, k} $ are bounded. Hence comparing \eqref{inducTaylor1} and
 \eqref{inducTaylor3} implies that
\begin{equation*}
\begin{aligned}
\big| \Phi_{j, k} & - \widehat \Phi_{j, k} \big|  + 
\big| \Gamma_{j, k} - \widehat \Gamma_{j, k} \big| 
 \\ \le  &  C( M_0, \ldots, M_j) 
   \sup_{j' \le j,   k' \le     k+1}    \big| U^\pm_{j', k'} -  \widehat U^\pm_{j', k'} \big| 
\end{aligned}
\end{equation*}
and using the induction hypothesis \eqref{inducjk}, one obtains  that for $j+1 + k < n$, 
$$
\big| \Phi_{j, k} - \widehat \Phi_{j, k} \big|  + 
\big| \Gamma_{j, k} - \widehat \Gamma_{j, k} \big| 
\le  C( M_0, \ldots, M_j)  \delta^{n-j-k-1/2}   . 
$$
In particular, this shows that $  \theta_{j+1, k} - \widehat \theta_{j+1, k} $ satisfies  \eqref{inducjk}. By Proposition \ref{propTaylor} and \eqref{cc1newnewapp} we get that $  q_{j+1, k} - \widehat q_{j+1, k} $ also satisfies \eqref{inducjk} and that a similar upper bounds also holds for $\delta^2 \dx q_{j+1}^\pm -\delta^2 \widehat{q}^\pm_{j+1,1}$. The compatibility condition \eqref{cc1newnew} is therefore a direct consequence of its approximate version \eqref{cc1newnewapp}.
%
%
Hence, we can use Proposition~\ref{propb1} to obtain  \eqref{inducj}. By induction, we have then proved that \eqref{inducj} holds for $j\leq n$. For $j=n+1$, one gets the result as in the proof of Proposition \ref{estID}.
%
 \end{proof}
 
 
 \subsection{An additional bound}
 
 By Proposition \ref{estIDfin}, we have a control of $U_j^{\rm in} $ in $H^{n+ 1 -j}(\cE)$, and thus of the boundary values
 $U ^\pm_{j, k}$ for $k + j \le n$.  The proof of the main theorem uses an $L^2$ energy estimate  of $U_j$
 which requires and additional  control of $  \av{q_{j}^{\rm in}} = \frac{1}{2} ( q^+_{j, 0} + q^-_{j, 0}) $. 
 When $j \le n$  if follows form the estimate above. However, we need to control the time derivatives  \emph{up to $ j = n +1$},  and we now provide the needed control of   $  \av{q^{\rm in}_{n+1} }$.

By Proposition~\ref{prop1}, we know that for initial data  
$U_0^{\rm in} $ in $\HH^n$ satisfying
$\jump{ q_0^{\rm in}}  = 0$ and $  \inf \{ 1 + \eps c' (\theta_0^{\rm in} \}  > 0$, there is a unique solution 
$C \in C^1 ([0, T^*), \HH^n)$ for some $T ^* > 0$. 
Using the o.d.e formulation  \eqref{nlode}, 
we see that $U$ is indeed  $ C^\infty ([0, T^*), \HH^n)$, so that the 
$U_j $ are defined for all $j$ and belong to  $\HH^n$. So the quantities 
 $ \av{ q_j^{\rm in}} $ are well defined. 
 Moreover, by 
 \eqref{dtqi} we know that on $[0, T^*)$, 
 \begin{equation*}
\label{dtqi4}
\frac{d}{dt} \av{q}  = -  \frac{1}{\alpha + 2 \delta} \Big( \delta^2 \jump{ \dx R_0 \Gamma  }  + \jump{ \theta } \Big). 
\end{equation*}
 Hence, for all $j $, 
 \begin{equation*}
 \av{q_{j + 1}} =  -  \frac{1}{\alpha + 2 \delta} \Big( \delta^2 \jump{ \dx R_0 \Gamma_j  }  + \jump{ \theta_j } \Big).
 \end{equation*} 
 In particular, 
 \begin{equation*}
 \big|  \av{q^{\rm in}_{j + 1}} \big|  \le C \big(\|  \Gamma_j^{\rm in}  \|_{L^2(\cE)}    + \|  \theta_j^{\rm in} \|_{H^1(\cE)} \big) . 
 \end{equation*} 
 Under the assumptions of Proposition~\ref{estIDfin}, we have a uniform control of the $U_j^{\rm in} $ 
 in $H^{1}$ for $j \le n$, and hence of the $\Gamma_j$ in $L^2$. Hence we have the following
additional estimate. 
\begin{proposition}
\label{addc} 
Under the assumptions of Proposition~\ref{estIDfin}, there is a constant $C$ such that for all 
$   j \le n+1$ ,
\begin{equation*}
 \big|  \av{q^{\rm in}_{j }} \big|  \le C. 
\end{equation*} 

\end{proposition}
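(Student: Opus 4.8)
The plan is to bypass trace estimates entirely and instead read off $\av{q_j^{\rm in}}$ from the scalar ODE relation \eqref{dtqi}, which pins down $\frac{d}{dt}\av{q}$ in terms of $\Gamma$ and $\jump{\theta}$ alone. First I would note that, by Proposition~\ref{prop1}, the solution $U$ is $C^\infty$ in time with values in $\HH^n$, so each $U_j=\dt^j U$ lies in $\HH^n$ and the numbers $\av{q_j^{\rm in}}$ are well defined. Differentiating \eqref{dtqi} $j$ times in time and evaluating at $t=0$ gives
\begin{equation*}
\av{q_{j+1}^{\rm in}} = -\frac{1}{\alpha+2\delta}\Big(\delta^2\jump{\dx R_0 \Gamma_j^{\rm in}} + \jump{\theta_j^{\rm in}}\Big),
\end{equation*}
so it suffices to bound the right-hand side uniformly for $0 \le j \le n$.

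For the nonlocal term I would invoke the uniform $L^2$-bounds on $\delta\dx R_0$ and $\delta^2\dx^2 R_0$ recorded in \S\ref{sect41}: the interpolation inequality underlying \eqref{est25} yields $\big|\delta^2\jump{\dx R_0 \Gamma_j^{\rm in}}\big| \le C\delta^{1/2}\|\Gamma_j^{\rm in}\|_{L^2(\cE)}\le C\|\Gamma_j^{\rm in}\|_{L^2(\cE)}$ since $\delta\le 1$. For the second term the one-dimensional trace inequality gives $\big|\jump{\theta_j^{\rm in}}\big| \le C\|\theta_j^{\rm in}\|_{H^1(\cE)}$, and since $\alpha+2\delta\ge\alpha>0$ is bounded below independently of $\delta$, this produces
\begin{equation*}
\big|\av{q_{j+1}^{\rm in}}\big| \le C\big(\|\Gamma_j^{\rm in}\|_{L^2(\cE)} + \|\theta_j^{\rm in}\|_{H^1(\cE)}\big).
\end{equation*}

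The final step is to feed in Proposition~\ref{estIDfin}. Expanding $\Gamma_j^{\rm in}=\dx\big(\theta_j^{\rm in}+\eps\sum_{k}\binom{j}{k}q_k^{\rm in}q_{j-k}^{\rm in}\big)$ by Leibniz and using that $H^1(\cE)$ is a one-dimensional algebra, I bound $\|\Gamma_j^{\rm in}\|_{L^2}$ polynomially in the norms $\|(\theta_k^{\rm in},q_k^{\rm in})\|_{H^1}$ for $k\le j$; for $j\le n$ these are all controlled by Proposition~\ref{estIDfin}, which furnishes control in $H^{n+1-k}\hookrightarrow H^1$. Likewise $\|\theta_j^{\rm in}\|_{H^1}$ is controlled for $j\le n$. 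This bounds $\av{q_{j+1}^{\rm in}}$ over $1\le j+1\le n+1$, and combining with the direct trace bound on $\av{q_j^{\rm in}}$ for $j\le n$ (where $q_j^{\rm in}\in H^{n+1-j}$ with $n+1-j\ge 1$) covers the full range $j\le n+1$.

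The hard part — and the sole reason this lemma is needed — is the endpoint $j=n+1$: there Proposition~\ref{estIDfin} only supplies an $L^2(\cE)$ bound on $q_{n+1}^{\rm in}$, which is too weak to control its trace, hence its average, by Sobolev embedding. The device that circumvents this is exactly that $\av{q}$ is not a free trace but is governed by the ODE \eqref{dtqi}, so its $(n+1)$-th time derivative at $t=0$ is expressed through $\Gamma_n^{\rm in}$ and $\theta_n^{\rm in}$, i.e.\ quantities one order lower in time that \emph{are} controlled in $L^2$ and $H^1$ respectively. The degenerate case $\delta=0$ is handled by the same reasoning with the relation $\frac{d}{dt}\av{q}=-\alpha^{-1}\jump{\theta}$ coming from \eqref{hypbc}, or alternatively by appealing to Corollary~\ref{coromain}.
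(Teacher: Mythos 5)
Your proposal is correct and follows essentially the same route as the paper: both read $\av{q_{j+1}^{\rm in}}$ off the ODE relation \eqref{dtqi} differentiated $j$ times at $t=0$, bound the nonlocal term by $\|\Gamma_j^{\rm in}\|_{L^2}$ via the uniform bounds on $\delta^2\dx R_0$, bound $\jump{\theta_j^{\rm in}}$ by the $H^1$ trace, and then invoke Proposition~\ref{estIDfin}. Your identification of $j=n+1$ as the sole nontrivial case, handled precisely because $\av{q}$ is governed by a scalar ODE involving quantities one order lower in time, is exactly the paper's point.
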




\section{Uniform estimates of time derivatives}
\label{estimates}

The goal of  this section is to prove the  \emph{a priori} estimates for the $\dt^j U$ which are stated in Proposition~\ref{Estdtj} below. From the beginning, we know that there is a conserved energy for the nonlinear system. 
The system obtained for time derivatives is a linearized  version of of \eqref{Boussthetanl}--\eqref{Transs2},
 and we first prove $L^2$ energy estimates for it. Controlling the commutators, they will imply estimates for the 
 time derivatives of the solution.

 \begin{notation}
  Given "constants" $C_0, m, M, \dots$, we denote by 
$C( C_0, M, m, \dots)$ or $\gamma (C_0, M, m ,\dots)$ a function of these constants, which may change from one line to another. It is assumed that the dependence of these functions on their arguments is nondecreasing and that they  are independent of $\eps$ and $\delta $ in $[0, 1]$.
 \end{notation}

\subsection{$L^2$-bounds}

Consider the following  linearized version of \eqref{Boussthetanl}--\eqref{Transs2} around some reference state $(\underline{\theta},\underline{q})$, with non-homogeneous source terms
\begin{equation}\label{Bousstheta_nh}
\begin{cases}
(1+\eps c'(\utheta))\dt \theta+ \dx q= \varepsilon f,\\
[1- \delta^2 \dx^2] \dt q+2\varepsilon \uq\dx q +\dx \theta= \varepsilon g
\end{cases}
\quad \mbox{ on }\quad {\mathcal E},
\end{equation}
with transmission conditions
\begin{align}
\label{CBl1_nh}
\jump{q}&=0,\\
\label{CBl2_nh}
-\delta^2\dt \jump{\dx q}+\jump{\theta}&=-\alpha \dt \av{q}.
\end{align}
We derive here an a priori bound for the total energy associated to this linear system,
$$
E^{\rm tot}_\uU(U,\av{q})=E^{\rm ext}_\uU(U)+\frac{1}{2}\alpha \av{q}^2
$$
where $ \frac{1}{2}\alpha \av{q}^2$ represents the (linearized) energy of the fluid under the object (up to terms that are constant since the solid is not moving), while $E^{\rm ext}_\uU(U)$ corresponds to the full (linearized) energy of the fluid in the exterior domain,
$$
E^{\rm ext}_\uU (U)= \frac{1}{2} \int_\cE \big((1+ \varepsilon c'(\utheta))\theta^2+ q^2 +\delta^2 (\dx q)^2\big) dx.
$$

\begin{proposition}\label{propL2}
Let $T>0$ and assume that $\uU\in W^{1,\infty}([0,T]\times \cE)$ satisfies $\jump{\uq}=0$ and  that there are  constants $0<c_0\leq C_0$ and $m>0$ such that
\begin{equation}
c_0 \le  1+\eps c'(\utheta)\le C _0 , \qquad | \utheta, \dt \utheta , \dx \uq | \le m
\qquad \mbox{on } [0,T]\times \cE.
\end{equation}
Then there exists $\gamma=\gamma(m ,\frac{1}{c_0})$ such that the solutions  $U\in C^1([0,T];{\mathbb H})$ of  \eqref{Bousstheta_nh}--\eqref{CBl2_nh} satisfy  for $0\leq t\leq T$
$$
 E^{\rm tot}_\uU\big(U(t),\av{q(t)}\big)\leq e^{\eps \gamma t}\Big( E^{\rm tot}_\uU\big(U_{\vert_{t=0}},\av{q}_{\vert_{t=0}}\big)+\frac{\eps}{2}\int_0^t e^{-\eps \gamma s} \Vert (f,g)(s)\Vert^2_{L^2(\cE)}ds\Big).
$$
\end{proposition}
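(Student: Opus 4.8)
The plan is to derive a differential energy identity by the standard multiplier method and then close it with Grönwall's lemma. First I would multiply the first equation of \eqref{Bousstheta_nh} by $\theta$ and the second by $q$, integrate over $\cE$, and add. The time-derivative terms reconstruct $\frac{d}{dt}E^{\rm ext}_\uU(U)$ up to the commutator $\frac{\eps}{2}\int_\cE \dt c'(\utheta)\,\theta^2$ arising from differentiating the weight $1+\eps c'(\utheta)$; the dispersive term $-\delta^2\int_\cE q\,\dx^2\dt q$ is integrated by parts to yield $\frac{\delta^2}{2}\frac{d}{dt}\int_\cE(\dx q)^2$ together with a boundary contribution $\delta^2\jump{q\,\dx\dt q}$. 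The two coupling terms $\int_\cE\theta\,\dx q$ and $\int_\cE q\,\dx\theta$ combine, after a final integration by parts, into the boundary term $-\jump{\theta q}$.

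The crux is the treatment of the boundary terms generated at $x=\pm R$, namely $\delta^2\jump{q\,\dx\dt q}-\jump{\theta q}$. Using the elementary identity $\jump{ab}=\jump{a}\av{b}+\av{a}\jump{b}$ together with the transmission condition $\jump{q}=0$, both jumps collapse to $\av{q}$ times a single jump, producing $\av{q}\big(\delta^2\jump{\dx\dt q}-\jump{\theta}\big)$. After commuting $\dt$ with $\jump{\dx\,\cdot\,}$, the second transmission condition \eqref{CBl2_nh} reads $\delta^2\jump{\dx\dt q}-\jump{\theta}=\alpha\dt\av{q}$, so this boundary contribution is precisely $\alpha\av{q}\,\dt\av{q}=\frac{d}{dt}\big(\tfrac12\alpha\av{q}^2\big)$, which merges with $E^{\rm ext}_\uU$ to form $\frac{d}{dt}E^{\rm tot}_\uU$. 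At this stage I expect the exact identity $\frac{d}{dt}E^{\rm tot}_\uU=\eps\int_\cE(f\theta+gq)+\frac{\eps}{2}\int_\cE\dt c'(\utheta)\,\theta^2-2\eps\int_\cE\uq\, q\,\dx q$.

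What remains is to bound the right-hand side by $\eps\gamma\,E^{\rm tot}_\uU+\frac{\eps}{2}\|(f,g)\|_{L^2}^2$. The source and commutator terms are handled by Cauchy--Schwarz, Young's inequality, the bounds $|\utheta,\dt\utheta|\le m$ with smoothness of $c$, and the coercivity $1+\eps c'(\utheta)\ge c_0$, which lets $\int_\cE\theta^2$ be controlled by $E^{\rm ext}_\uU$. The term I expect to be the main obstacle is $\int_\cE\uq\, q\,\dx q$, since $\uq$ itself is \emph{not} assumed bounded (only $\dx\uq$ is). I would rewrite it as $\tfrac12\int_\cE\uq\,\dx(q^2)$ and integrate by parts; the boundary contribution is a multiple of $\jump{\uq\, q^2}$, which vanishes because $\jump{q}=0$ and $\jump{\uq}=0$ force $q$ and $\uq$ to take equal values at $\pm R$. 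This reduces the term to a volume integral of the form $\eps\int_\cE\dx\uq\, q^2$, bounded by $\eps m\int_\cE q^2\le \eps m\,C\,E^{\rm tot}_\uU$. Collecting all estimates gives $\frac{d}{dt}E^{\rm tot}_\uU\le\eps\gamma\,E^{\rm tot}_\uU+\frac{\eps}{2}\|(f,g)\|_{L^2(\cE)}^2$ with $\gamma=\gamma(m,\tfrac{1}{c_0})$, and Grönwall's lemma yields the stated bound.
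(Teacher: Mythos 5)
Your proposal is correct and follows essentially the same route as the paper: the paper writes the same multiplier computation as a local conservation law $\dt e_\uU+\dx\cF_\uU=\dots$ with flux $\cF_\uU=q(\theta+\eps\uq q-\delta^2\dx\dt q)$, so that your integration by parts of $\eps\int_\cE\uq\,\dx(q^2)$ (with the vanishing jump $\jump{\uq q^2}$) and your identification of the boundary contribution with $\frac{d}{dt}\big(\tfrac12\alpha\av{q}^2\big)$ via $\jump{q}=\jump{\uq}=0$ and \eqref{CBl2_nh} are exactly the steps the paper performs. The closing Gr\"onwall argument, with $\Vert U\Vert_{L^2}^2$ absorbed by $c_0^{-1}E^{\rm tot}_\uU$, is also identical.
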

\begin{proof}
Multiplying the first equation of  \eqref{Bousstheta_nh} by $\theta$ and the second by $q$, one gets
$$
\dt e_\uU +\dx {\mathcal F}_\uU=\eps f\theta+\eps gq+\frac{\eps }{2}\dt (c'(\utheta))\theta^2+\eps (\dx \uq)q^2
$$
with
$$
e_\uU=\frac{1}{2}(1+ \varepsilon c'(\utheta))\theta^2+\frac{1}{2}q^2 +\frac{1}{2}\delta^2 (\dx q)^2
\quad\mbox{ and }\quad
{\mathcal F}_\uU=q \big( \theta+\eps \uq q-\delta^2 \dx\dt q\big).
$$
Integrating over $\cE$, we then get
\begin{align*}
\frac{d}{dt}E^{\rm ext}_\uU(U)- \jump{{\mathcal F}_\uU}\leq& \eps \Vert(f,g)\Vert_{L^2(\cE)}\Vert(\theta,q)\Vert_{L^2(\cE)}\\
&+ \frac{1}{2} \eps \max\Big\{ \dt c'(\utheta) ,2 \dx \uq  \Big\} \Vert(\theta,q)\Vert^2_{L^2(\cE)}
\end{align*}
Because  $\jump{q}=\jump{\uq}=0$,  \eqref{CBl2_nh} implies that
$$
\jump{{\mathcal F}_\uU} = \av{q} \big( \jump{\theta} - \delta^2 \jump{\dt \dx q} \big)  
= -\frac{1}{2}  \alpha \dt \av{q}^2.
$$
Therefore 
\begin{equation} 
\label{estL2}
\frac{d}{dt} E^{\rm tot}_\uU(U,\av{q}) \leq \frac{\eps}{2}\Vert(f,g)\Vert_{L^2(\cE)}^2 
+ \frac{1}{2}\eps \gamma   \Vert U \Vert^2_{L^2(\cE) } .
\end{equation}
$$  
\gamma = 
\max\{1, \| \dt c'(\utheta) ,2 \dx \uq   \|_{L^\infty}  \}
$$ 
Estimating $\Vert U \Vert^2_{L^2(\cE) }$ by $\frac{1}{c_0} E_\uU ^{\rm tot} (U, \av{q})  $, the result follows  from Gronwall's lemma.
\end{proof}


\subsection{Nonlinear estimates} 

In preparation for the control of the commutators in the energy estimates for the times derivatives, we first recall some  Gagliardo-Nirenberg inequalities. 
\begin{lemma} 
Let $\theta \in L ^\infty (\RR \times \mathcal E) $ such that $\dt^k \theta \in L^2 (\RR \times \mathcal E) $ for some 
$k > 0$. Then, for $0 \le j \le k$ and $p$ such that $ \frac{2}{p} = \frac{j}{k} $, there is a constant $C$ such that
$$
\big\| \dt^j \theta \big\|_{L^p} \le C  \big\| \theta \big\|^{1 - \frac{2}{p}}_{L^\infty}  \big\| \dt^k \theta \big\|^{\frac{2}{p}}_{L^2} .
$$
\end{lemma}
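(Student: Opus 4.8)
The plan is to reduce the inequality to its one–dimensional counterpart in the time variable and then appeal to the classical Gagliardo–Nirenberg inequality on the line. Since only time derivatives occur and all three norms are taken over the product space $\RR\times\cE$, I would first freeze $x$ and argue slice by slice. Writing the $L^p$ and $L^2$ norms as iterated integrals (Tonelli), the statement amounts to the one–dimensional inequality
$$
\big\|\dt^j v\big\|_{L^p(\RR)}\le C\,\|v\|_{L^\infty(\RR)}^{\,1-\frac{2}{p}}\big\|\dt^k v\big\|_{L^2(\RR)}^{\,\frac{2}{p}},\qquad \frac{2}{p}=\frac{j}{k},
$$
applied to $v=\theta(\cdot,x)$ for almost every $x\in\cE$ (the endpoints $j=0$ and $j=k$ being trivial, and $p\in[2,\infty]$ throughout). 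Raising this to the power $p$, integrating in $x$, bounding $\|\theta(\cdot,x)\|_{L^\infty(\RR)}\le\|\theta\|_{L^\infty(\RR\times\cE)}$ pointwise and pulling that factor out of the integral, and finally using $\int_\cE\|\dt^k\theta(\cdot,x)\|_{L^2(\RR)}^2\,dx=\|\dt^k\theta\|_{L^2(\RR\times\cE)}^2$, one recovers the claimed bound after a $p$-th root. The exponents close exactly because $1-\frac{2}{p}=\frac{p-2}{p}$.

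It then remains to establish the one–dimensional inequality, which is classical. The engine is a single integration by parts: for smooth compactly supported $w$ and $p\in[2,\infty)$,
$$
\int_\RR|w'|^p\,dt=-(p-1)\int_\RR w\,|w'|^{p-2}w''\,dt,
$$
whence, by Hölder with conjugate exponents $\frac{p}{p-2}$ and $\frac{p}{2}$,
$$
\|w'\|_{L^p}^{p}\le (p-1)\,\|w\|_{L^\infty}\,\|w'\|_{L^p}^{p-2}\,\|w''\|_{L^{p/2}},\qquad \mbox{i.e.}\qquad \|w'\|_{L^p}^{2}\le (p-1)\,\|w\|_{L^\infty}\,\|w''\|_{L^{p/2}}.
$$
For $p=4$ this is already the base case $\|w'\|_{L^4}\le C\|w\|_{L^\infty}^{1/2}\|w''\|_{L^2}^{1/2}$, i.e. the lemma with $k=2$, $j=1$. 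The general case $0<j<k$ is deduced from this consecutive–order estimate by Nirenberg's standard induction, using that the reciprocal exponents $\frac{1}{p_i}=\frac{i}{2k}$ interpolate linearly between the endpoints $\frac{1}{p_0}=0$ and $\frac{1}{p_k}=\frac12$; chaining the three–term estimate along consecutive orders and interpolating the intermediate $L^{p_i}$ norms by Hölder yields $\|v^{(j)}\|_{p_j}\le C\|v\|_{L^\infty}^{1-j/k}\|v^{(k)}\|_{L^2}^{j/k}$. All manipulations are first performed for smooth compactly supported $v$ and then extended to the admissible class by density.

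As a consistency check, the one–dimensional inequality is scale invariant under $v(t)\mapsto v(\lambda t)$: both sides scale like $\lambda^{\,j-1/p}$ precisely when $\frac{2}{p}=\frac{j}{k}$, which pins down the exponent relation. The easy part is the reduction to one dimension, which is just Tonelli together with the trivial pointwise bound on the $L^\infty$ factor; I expect the main obstacle to be the bookkeeping of exponents in the iteration that produces the general–order one–dimensional inequality from the second–order base estimate, where one must check that the intermediate norms telescope correctly and that every integration by parts is legitimate (hence the passage through smooth compactly supported functions). A point worth stressing is that $\theta$ is only assumed bounded, not $L^2$ in time, so the $L^\infty$ endpoint is genuinely used: the integration by parts above is arranged precisely so that the factor $\|w\|_{L^\infty}$, rather than an $L^2$ norm of $w$ itself, carries the low–order information.
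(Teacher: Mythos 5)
Your argument is correct, but note that the paper does not prove this lemma at all: it is stated as a recalled classical Gagliardo--Nirenberg inequality, so there is no proof to compare against and what you have written is a self-contained substitute for a citation. Your route is the standard one (Tonelli slicing in $x$, then the one-dimensional Nirenberg chaining), and the slicing step is clean: the exponents close exactly because $\int_\cE\|\dt^k\theta(\cdot,x)\|_{L^2(\RR)}^2\,dx=\|\dt^k\theta\|_{L^2}^2$ while the $L^\infty$ factor is bounded slice by slice. Two points deserve to be made explicit. First, the displayed base estimate only gives the three-term inequality with $L^\infty$ on the low-order factor; the chaining actually needs $\|v^{(i)}\|_{p_i}^2\le(p_i-1)\|v^{(i-1)}\|_{p_{i-1}}\|v^{(i+1)}\|_{p_{i+1}}$ for all consecutive orders, which follows from the same integration by parts and a three-factor H\"older since $\tfrac{1}{p_{i-1}}+\tfrac{1}{p_{i+1}}=\tfrac{2}{p_i}$ when $\tfrac{1}{p_i}=\tfrac{i}{2k}$; you should state this, and then the conclusion is the discrete log-convexity of $i\mapsto\log\|v^{(i)}\|_{p_i}$ rather than a H\"older interpolation of intermediate norms. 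Second, ``extended by density'' is too quick at the $L^\infty$ endpoint: compactly supported smooth functions are not dense in $L^\infty$, so one must mollify (which does not increase either norm) and truncate with a cutoff $\chi(t/R)$, checking that the commutator terms $v^{(m)}\chi^{(k-m)}(\cdot/R)$ vanish in $L^2$ as $R\to\infty$ (they do, since they are $O(R^{-(k-m)})$ on a set of measure $O(R)$ and $k-m\ge 1>\tfrac12$), and then pass to the limit by Fatou. With these two clarifications the proof is complete.
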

Combined with H\"older's estimates to interpolate between $L^2$ and 
$L^{2k/j}$, this implies that for  $ 1 \ge  \frac{2}{p} \ge \frac{j}{k} $
\begin{equation}
\label{GNE2}
\big\| \dt^j \theta \big\|_{L^p} \le C  \big\| \theta \big\|^{1 - \frac{2}{p}}_{L^\infty} 
\big( \sum_{l \le k}  \big\| \dt^l\theta \big\|_{L^2} \big) ^{\frac{2}{p}}  . 
\end{equation}

Functions defined in $ ] - \infty , T] \times \mathcal E$  can be extended  to $\RR \times \mathcal E$ by multiple reflections which preserve, up to numerical constants, the $L^p$ norms of the functions and their derivatives. Thus the estimates above remain true, with new constants $C$ for functions supported in 
$]- \infty , T]$. For functions supported in 
$[0, T]$, one can proceed similarly when $T \ge 1$ using  extension operators, but when $T \le 1$, one has to argue differently  
because the extension operators are not uniformly bounded when $T \to 0$.  
When $\theta \in L ^\infty ([0,T] \times \mathcal E) $ and  $\dt^k \theta \in L^2 ([0;T] \times \mathcal E) $, one can define the trace of $\dt^l \theta $ at $t = 0$ for $l < k$. If
\begin{equation}
\label{zero}
\dt^l \theta _{| t = 0}  = 0  , \qquad 0\le l < k
\end{equation}
the extension of $\theta$ by $0$ for $t < 0$ belongs to 
$ L ^\infty (] -\infty ,T] \times \mathcal E) $ and  $\dt^k \theta \in L^2 (] - \infty;T] \times \mathcal E) $ and the estimates above are satisfied. Suppose more generally that
\begin{equation}
\label{GNtraces} 
\dt^l \theta _{| t = 0} \in H^{1} (\mathcal E)  \qquad 0\le l < k. 
\end{equation}
Let 
 $$
 \tilde \theta = \sum_{l < k} \frac {t^l}{l !}  \dt^l \theta _{| t = 0}  \in C^\infty (\RR , H^1( \mathcal E)). 
 $$ 
Then  $\dt^l \tilde \theta _{| t = 0}  = \dt^l \theta _{| t = 0} $ and 
$\dt^l  \tilde \theta \in C^0(\RR \times H^1 (\mathcal E)) $ for $l < k$.  Because $H^1  \subset L^2 \cap L ^\infty
\subset L^p$ for $p \ge 2$, one has 
$$
\big\| \dt^j \tilde \theta \big\|_{L^p ([0, T] \times \mathcal E) } \le C T^{\frac{1}{p} } K_0, \qquad 
K_0 = \sum_{l < k}  \| \dt^l \theta _{| t = 0}\|_{H^{1} (\mathcal E) } . 
$$
The difference $\theta - \tilde \theta$ satisfies \eqref{zero} and we can estimate the $L^p$ norm of 
$\dt^l (\theta - \tilde \theta)$ using \eqref{GNE2}, as explained above. Adding up, this implies that
$$
\big\| \dt^j \theta \big\|_{L^p ([0, T] \times \mathcal E) } \le C T^{\frac{1}{p} } K_0 
+ C  ( \| \theta \|_{L^\infty} + K_0)^{1- \frac{2}{p} }  ( T^{\frac{1}{2} }K_0 + \sum_{l \le k}  \| \dt^l \theta \| _{L^2}) ^{\frac{2}{p}} 
$$

Using this estimate when $T < 1$ and  \eqref{GNE2} when $T \ge 1$, we have proved the following estimates which are valid in both cases. 
\begin{lemma}
\label{GNET}
 For all $T > 0$,  $0 \le j \le k$ and  $ 1 \ge  \frac{2}{p} \ge \frac{j}{k} $, there is $C$ such that 
 for all $\theta \in L ^\infty ([0,T] \times \mathcal E) $ with $\dt^k \theta \in L^2 ([0;T] \times \mathcal E) $
 satisfying  \eqref{GNtraces},   one has 
\begin{equation*}
\begin{aligned}
\big\| \dt^j \theta& \big\|_{L^p([0,T] \times \mathcal E)} \\
 & \le C  
\big( K_0  + \big\| \theta \big\|_{L^\infty([0,T] \times \mathcal E)} \big)^{1 - \frac{2}{p}}
\big( K_0 T^{1/2}   +   \sum_{l \le k}  \big\| \dt^l\theta \big\|_{L^2([0,T] \times \mathcal E)} \big) ^{\frac{2}{p}}
\end{aligned}
\end{equation*}
where 
$$
 K_0 = \sum_{1 \le l < k}  \| \dt^l \theta _{| t = 0}\|_{H^{1} (\mathcal E) } . 
$$
\end{lemma}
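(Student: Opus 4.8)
The plan is to establish the bound by interpolation in the time variable, isolating the scaling-invariant content in the estimate \eqref{GNE2} and relegating the non-uniform behavior as $T \to 0$ to an explicitly computable polynomial correction. The inequality \eqref{GNE2} holds on $\RR \times \cE$ and, by reflection, on $]-\infty, T] \times \cE$; the whole point is to transfer it to the finite slab $[0,T] \times \cE$ with a constant that does not degenerate when $T$ is small. I would therefore split the argument according to whether $T \ge 1$ or $T < 1$.

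First I would dispose of the case $T \ge 1$. Here a time-extension operator across $t = 0$ is bounded uniformly (its norm is controlled once the slab length is bounded below), so I extend $\theta$ to $]-\infty, T] \times \cE$ preserving the relevant norms up to a fixed numerical constant and apply \eqref{GNE2}. Since $K_0, T \ge 0$, the right-hand side of the claimed estimate always dominates the one produced by \eqref{GNE2} (it has $K_0 + \|\theta\|_{L^\infty} \ge \|\theta\|_{L^\infty}$ and $K_0 T^{1/2} + \sum_{l\le k}\|\dt^l\theta\|_{L^2} \ge \sum_{l\le k}\|\dt^l\theta\|_{L^2}$), so the bound follows a fortiori.

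The substance of the proof is the regime $T < 1$, where extension operators are no longer uniformly bounded. To bypass this I would subtract the time-Taylor polynomial carrying the prescribed traces,
$$
\tilde\theta = \sum_{l < k} \frac{t^l}{l!}\, \dt^l \theta_{| t = 0} \in C^\infty(\RR, H^1(\cE)),
$$
which is well defined by the trace hypothesis \eqref{GNtraces}, so that $\dt^l(\theta - \tilde\theta)_{| t = 0} = 0$ for all $l < k$. The remainder $\theta - \tilde\theta$ then extends by zero to $t < 0$ while remaining in $L^\infty$ with $\dt^k \in L^2$, so \eqref{GNE2} applies to it directly. For the polynomial part, the embedding $H^1(\cE) \hookrightarrow L^2 \cap L^\infty \hookrightarrow L^p$ (for $p \ge 2$) gives the direct bounds $\|\dt^j \tilde\theta\|_{L^p([0,T]\times\cE)} \le C T^{1/p} K_0$ and $\|\dt^l \tilde\theta\|_{L^2([0,T]\times\cE)} \le C T^{1/2} K_0$ for $l < k$, with $\dt^k \tilde\theta = 0$.

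It then remains to recombine. I would bound $\|\theta - \tilde\theta\|_{L^\infty} \le \|\theta\|_{L^\infty} + C K_0$ (using $T < 1$) and $\sum_{l\le k}\|\dt^l(\theta - \tilde\theta)\|_{L^2} \le \sum_{l\le k}\|\dt^l\theta\|_{L^2} + C T^{1/2} K_0$, feed these into \eqref{GNE2} applied to the zero-extended remainder, and add back the polynomial contribution. The one point requiring care is the matching of the powers of $T$: the polynomial estimate $C T^{1/p} K_0$ is exactly $C\, K_0^{1 - 2/p}\,(K_0 T^{1/2})^{2/p}$, so it slots cleanly into the $(K_0 + \|\theta\|_{L^\infty})^{1-2/p}(K_0 T^{1/2} + \cdots)^{2/p}$ structure of the right-hand side without producing a spurious $T$-dependence. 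I expect this bookkeeping — verifying that $T^{1/p}$ from the Taylor correction embeds into the $(K_0 T^{1/2})^{2/p}$ factor — to be the only delicate step; the rest is the standard Gagliardo–Nirenberg interpolation combined with the triangle inequality.
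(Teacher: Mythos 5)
Your proof is correct and follows essentially the same route as the paper: the case $T\ge 1$ by uniformly bounded extension, and for $T<1$ the subtraction of the time-Taylor polynomial $\tilde\theta$ carrying the traces, the zero-extension of the remainder to apply \eqref{GNE2}, the bound $\|\dt^j\tilde\theta\|_{L^p}\le CT^{1/p}K_0$ via $H^1(\cE)\hookrightarrow L^p$, and the recombination. Your closing observation that $CT^{1/p}K_0 = CK_0^{1-2/p}(K_0T^{1/2})^{2/p}$ is exactly the bookkeeping that lets the paper absorb the polynomial contribution into the stated right-hand side.
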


\subsection{Bounds on the time derivatives of the solution}

Let 
$U \in C^\infty ([0, T], \HH^n)$   be a solution of the transmission problem \eqref{Boussthetanl}--\eqref{Transs2}.  
Let also $U_j=(\theta_j,q_j)= (\partial_t^j \theta, \partial_t^j q)= \dt^j U$. Differentiating the equations in time
yields 
\begin{equation}\label{Boussthetaj}
\begin{cases}
(1+\eps c'(\theta))\dt \theta_j+ \dx q_j=\eps f_{(j)},\\
[1- \delta^2 \dx^2] \dt q_j+2\varepsilon q\dx q_j +\dx \theta_j =\eps g_{(j)}
\end{cases}
\quad \mbox{ on }\quad {\mathcal E}
\end{equation}
with transmission conditions
\begin{align}
\label{CBl1j}
\jump{q_j}&=0,\\
\label{CBl2j}
-\delta^2\dt \jump{\dx q_j}+\jump{\theta_j}&=-\alpha \av{q_{j+1}}
\end{align}
and 
\begin{equation}\label{deffg}
f_{(j)}=-  \sum_{k=1}^j \binom{k}{j} \dt^k (c'(\theta)) \theta_{j+1-k}  
\quad\mbox{ and }\quad
g_{(j)}= -2\sum_{k=1}^j \binom{k}{j} q_k \dx q_{j-k}.
\end{equation}
The following two lemmas provide some control on the source terms $f_{(j)}$ and $g_{(j)}$.
\begin{lemma}
\label{lemfj}
For $ 1 \le j \le n+1$ one has  for all $t \in (0,  T] $: 
$$
\big\| f_{(j)} \big\|_{L^2([0, t] \times \mathcal E )}  \le  C (\| \theta, \dt \theta  \|_{L^\infty},  K_0)
  \big( K_0 T^{1/2} +  \sum_{1 \le l \le j} \big\| \dt^l \theta  \big\|_{L^2([0, t] \times \mathcal E )  }  \big)
$$
with $K_0$ given by $ K_0 = \sum_{1 \le l < j}  \| \dt^l \theta _{| t = 0}\|_{H^{1} (\mathcal E) } $.
\end{lemma}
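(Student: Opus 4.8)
\emph{Plan.} By Leibniz' rule the source term $f_{(j)}$ of \eqref{deffg} is a finite combination, over $1\le k\le j$, of products $\dt^k(c'(\theta))\,\theta_{j+1-k}$. Since $c$ is smooth and $\theta\in L^\infty$, the Fa\`a di Bruno formula expands each $\dt^k(c'(\theta))$ into a finite sum of monomials $c^{(1+r)}(\theta)\,\theta_{l_1}\cdots\theta_{l_r}$ with $l_i\ge 1$ and $l_1+\cdots+l_r=k$, whose scalar prefactor $c^{(1+r)}(\theta)$ is bounded in $L^\infty$ by $C(\|\theta\|_{L^\infty})$. Hence $f_{(j)}$ is a finite sum of terms of the form $C(\|\theta\|_{L^\infty})\,\theta_{m_1}\cdots\theta_{m_p}$ with each $m_i\ge 1$, at least two factors, and orders summing to $j+1$. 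The plan is to bound the $L^2([0,t]\times\cE)$ norm of each such monomial separately and then add the finitely many contributions.

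First I would isolate in each monomial the factors of order one, i.e.\ the occurrences of $\dt\theta=\theta_1$, and place them in $L^\infty$; together with the bounded prefactor $c^{(1+r)}(\theta)$ these are absorbed into the constant $C(\|\theta,\dt\theta\|_{L^\infty},\dots)$. The remaining factors, all of order $m_i\ge 1$, are estimated by H\"older's inequality with conjugate exponents $p_i$ satisfying $\sum_i p_i^{-1}=\tfrac12$, combined with the space--time Gagliardo--Nirenberg estimate of Lemma~\ref{GNET}. Taking $p_i$ at the scaling $2/p_i=m_i/j$ and invoking \eqref{GNE2} through Lemma~\ref{GNET} bounds each factor by $C(\dots)\big(K_0+\|\theta\|_{L^\infty}\big)^{1-2/p_i}\big(K_0T^{1/2}+\sum_{l\le j}\|\theta_l\|_{L^2}\big)^{2/p_i}$. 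Multiplying these and using that the exponents $2/p_i$ then sum to one reconstitutes a single factor $\big(K_0T^{1/2}+\sum_{1\le l\le j}\|\theta_l\|_{L^2([0,t]\times\cE)}\big)$, the surviving powers of $K_0+\|\theta\|_{L^\infty}$ being swept into the constant.

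The delicate bookkeeping, and the main obstacle, is to guarantee that the exponents really do close, i.e.\ that after extracting the order-one and order-zero factors the orders $m_i$ of the factors left to interpolate sum to at most $j$, so that the admissibility condition $2/p_i\ge m_i/j$ of Lemma~\ref{GNET} is compatible with $\sum_i 2/p_i\le 1$. The sharpest case is that of the ``balanced'' monomials, where two factors $\theta_a,\theta_b$ have comparable high order $a+b=j+1$ with $a,b\ge 2$: here the product sits exactly at the borderline of the Gagliardo--Nirenberg scaling, which is precisely why the estimate is stated with the whole family $\theta_l$, $1\le l\le j$, and with $L^\infty$ control of both $\theta$ \emph{and} $\dt\theta$, rather than a single top-order norm. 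The second point requiring care is the short-time regime $T<1$, where the extension of $\theta$ across $t=0$ is not uniformly bounded; this is exactly why Lemma~\ref{GNET} carries the trace term $K_0T^{1/2}$ built from the initial values $\dt^l\theta_{|t=0}$, $1\le l<j$, in $H^1(\cE)$, and I would use Lemma~\ref{GNET} in the form already adapted to $[0,t]\times\cE$ so that these traces are accounted for automatically before summing the monomial bounds.
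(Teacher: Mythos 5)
Your overall plan---expand $f_{(j)}$ into monomials $c^{(\nu+1)}(\theta)\,\theta_{l_1}\cdots\theta_{l_{\nu+1}}$ with $l_r\ge 1$, $\sum l_r=j+1$, and at least two $\theta$-factors, then combine H\"older with the space--time Gagliardo--Nirenberg estimate of Lemma~\ref{GNET} including its $K_0T^{1/2}$ trace correction---is exactly the paper's strategy. But the exponent bookkeeping you propose does not close, and you acknowledge as much without resolving it. With your choice, Lemma~\ref{GNET} is applied to $\theta$ itself (base exponent $k=j$), so each factor $\theta_{m_i}$ costs $2/p_i\ge m_i/j$. For a monomial with no order-one factor, e.g.\ $\theta_a\theta_b$ with $a+b=j+1$ and $a,b\ge 2$, this forces $\sum_i 2/p_i\ge (j+1)/j>1$: the product is strictly supercritical for an $L^2$ bound, not ``exactly at the borderline'', and extracting order-one factors into $L^\infty$ does not help precisely because there are none to extract.

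The missing idea is a derivative shift: apply Lemma~\ref{GNET} to the function $\dt\theta$ with $k=j-1$, writing $\dt^{l_r}\theta=\dt^{l_r-1}(\dt\theta)$, so that the admissibility constraint becomes $\frac{l_r-1}{j-1}\le\frac{2}{p_r}\le 1$. Since every monomial has at least $\nu+1\ge 2$ factors, one has
$$
\sum_r (l_r-1)=j+1-(\nu+1)=j-\nu\le j-1,
$$
so exponents with $\sum_r 2/p_r=1$ can always be chosen, including in your ``balanced'' case ($\theta_a\theta_b$, $a+b=j+1$, gives $(a-1)+(b-1)=j-1$ exactly). Each factor is then bounded by $C\big(K_0+\|\dt\theta\|_{L^\infty}\big)^{1-2/p_r}\big(K_0T^{1/2}+\sum_{l\le j}\|\dt^l\theta\|_{L^2}\big)^{2/p_r}$---this is where the $L^\infty$ norm of $\dt\theta$ (not just $\theta$) enters the constant---and the product reconstitutes exactly one power of the $L^2$ sum. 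Without this shift your argument fails on all monomials free of order-one factors, which occur for every $j\ge 3$.
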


\begin{proof} Note that
$f_{(j)}$ is a linear combination of terms
$$
f _* = c^{(\nu+1 )} (\theta) \dt^{l_1}  \theta  \ldots  \dt^{l_{\nu+1}} \theta  
$$
with $\nu\ge 1$, $1 \le l_r \le j $ and $\sum l_r = j+1$.  We apply Lemma~\ref{GNET} to $\dt \theta$ and $k = j-1$, to estimate  $  \dt^{l_r} \theta$ in $L^{p_r} $ with  exponents $p_r$ such that $  \frac{l_r-1}{j-1 } \le \frac{2}{p_r} \le  1 $ 
and $\sum \frac{2}{p_r} = 1$. Note that such a choice is possible because 
$\sum {(l_r-1)}  =  j + 1 - \nu - 1 \le j- 1$.   This implies that  
$$
\big\| f_*  \big\|_{L^2 ([0, t] \times \mathcal E )}   \le C(\Vert \theta \Vert_{L^\infty})
\big( K_0 + \big\| \dt \theta \big\|_{L^\infty} \big)^{\nu }
\big(  K_0 T^{1/2}  +   \sum_{1\le l \le j}  \big\| \dt^l\theta \big\|_{L^2([0,T] \times \mathcal E)} \big) 
$$
and adding such estimates one gets the result.
\end{proof} 

\begin{lemma}
\label{lemgj}
For $ 1 \le j \le n+1$ one has  for all  $t \in (0,  T] $: 
$$
\big\| g_{(j)} \big\|_{L^2([0, t] \times \mathcal E )}  \le  C (\|\theta  , \dt U  \|_{L^\infty},  K_0)
  \big( K_0  T^{1/2} +  \sum_{1 \le l \le j}  \big\| \dt^l U \big\|_{L^2([0, t] \times \mathcal E )  }  
\big) 
$$
with now  
\begin{equation}
\label{M0}
 K_0 = \sum_{1 \le l < j}  \| \dt^l U _{| t = 0}\|_{H^{1} (\mathcal E) } . 
\end{equation}
\end{lemma}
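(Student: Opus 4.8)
The plan is to follow verbatim the strategy used for Lemma~\ref{lemfj}, the only genuinely new feature being the spatial derivative carried by one of the two factors. Recalling \eqref{deffg}, the source $g_{(j)}$ is a finite linear combination (with binomial coefficients) of products of the form
$$
g_* = \dt^{k} q \cdot \dx \dt^{j-k} q , \qquad 1 \le k \le j ,
$$
so it suffices to bound each $\| g_* \|_{L^2([0,t]\times\cE)}$ and sum. First I would separate the two factors by H\"older's inequality,
$$
\| g_* \|_{L^2} \le \| \dt^k q \|_{L^p}\, \| \dt^{j-k} \dx q \|_{L^{p'}} , \qquad \tfrac 2p + \tfrac{2}{p'} = 1 ,
$$
and then apply the space--time Gagliardo--Nirenberg estimate of Lemma~\ref{GNET} to each factor separately, using as base functions $\dt q$ for the first factor (legitimate since $k\ge 1$, so $\dt^k q = \dt^{k-1}(\dt q)$) and $\dx q$ for the second, both taken with maximal differentiation order $k=j-1$.

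The exponent bookkeeping is identical to the one for $f_{(j)}$: I would choose $\frac 2p = \frac{k-1}{j-1}$ and $\frac{2}{p'} = \frac{j-k}{j-1}$, which both lie in $[0,1]$ and sum to $1$ precisely because $(k-1)+(j-k)=j-1$ matches the common order. Lemma~\ref{GNET} then bounds the first factor by $\big( K_0 + \| \dt q\|_{L^\infty}\big)^{1-2/p}$ times $\big( K_0 T^{1/2}+\sum_{1\le l\le j}\| \dt^l U\|_{L^2}\big)^{2/p}$, the $H^1$ traces entering $K_0$ being exactly those declared in \eqref{M0}. Multiplying the two interpolation inequalities, the $L^\infty$ prefactors collapse into $C(\| \theta,\dt U\|_{L^\infty},K_0)$, the two $L^2$-type factors raised to the complementary powers $2/p$ and $2/p'$ combine (a product $X^{2/p}Y^{2/p'}$ with $\tfrac2p+\tfrac2{p'}=1$ being dominated by $X+Y$), and summing over $k$ and over the finitely many monomials yields the stated estimate. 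The degenerate case $j=1$, where $j-1=0$ forbids interpolation, must be handled by hand: $g_{(1)}$ is the single product $\dt q\cdot \dx q$, estimated directly by $\| \dt q\|_{L^\infty}\| \dx q\|_{L^2}$.

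The main obstacle, and the only place where the argument departs from the purely temporal estimate of Lemma~\ref{lemfj}, is the control of the factor carrying $\dx$. Applying Lemma~\ref{GNET} with base function $\dx q$ forces the appearance of its \emph{low-order} norms --- namely $\| \dx q\|_{L^\infty}$ and the $t=0$ traces of $\dt^l \dx q$ --- together with the space--time $L^2$ norms of $\dt^l \dx q=\dx\dt^l q$. The delicate point is to verify that all of these are supplied by the right-hand side: the $L^2$ norms and the traces are absorbed precisely because $K_0$ in \eqref{M0} is built from the $H^1(\cE)$ (rather than merely $L^2$) norms of the $\dt^l U$, which encode one spatial derivative of $q$, while $\| \dx q\|_{L^\infty}$ is furnished by the controlled $L^\infty$ data. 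Tracking this matching carefully, so that the spatial derivative is accounted for without requiring more regularity than $K_0$ provides and without losing the gain $T^{1/2}$, is the crux of the proof.
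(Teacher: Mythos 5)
Your reduction of $g_{(j)}$ to products $\dt^k q\cdot\dx\dt^{j-k}q$ and the exponent bookkeeping are fine, but the way you propose to handle the factor carrying $\dx$ does not close; you have correctly identified the crux without resolving it. If you apply Lemma~\ref{GNET} with base function $\dx q$ and maximal order $j-1$, the right-hand side of that lemma produces (i) the space--time norms $\sum_{l\le j-1}\|\dt^l\dx q\|_{L^2([0,t]\times\cE)}$ and (ii) the constant $\sum_{1\le l<j-1}\|\dt^l\dx q_{\,|t=0}\|_{H^1(\cE)}$. Neither is supplied by the right-hand side of the statement: the sum $\sum_{1\le l\le j}\|\dt^l U\|_{L^2([0,t]\times\cE)}$ contains no spatial derivatives at all, and $K_0$ in \eqref{M0} controls only $\|\dt^l\dx q_{\,|t=0}\|_{L^2(\cE)}$, one spatial derivative short of the $H^1$ traces of the base function $\dx q$ that Lemma~\ref{GNET} requires. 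Your claim that these quantities ``are absorbed precisely because $K_0$ is built from the $H^1(\cE)$ norms of the $\dt^l U$'' is therefore incorrect. The same problem already appears in your hand treatment of $j=1$: $\|\dx q\|_{L^2([0,t]\times\cE)}$ is not among the quantities on the right-hand side either. (Likewise, $\|\dx q\|_{L^\infty}$ is not an argument of the constant $C(\|\theta,\dt U\|_{L^\infty},K_0)$; it is only recoverable through the equation.)

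The missing idea --- and the paper's actual proof --- is to eliminate the spatial derivative \emph{before} interpolating, using the first equation of \eqref{Boussthetaj}: $\dx q_{j-k}=\eps f_{(j-k)}-(1+\eps c'(\theta))\dt\theta_{j-k}$. After this substitution, and expanding $f_{(j-k)}$ as in the proof of Lemma~\ref{lemfj}, $g_{(j)}$ becomes a linear combination of products of \emph{time} derivatives of $\theta$ and $q$ with coefficients depending on $\theta$ only, and Lemma~\ref{GNET} applied to the base functions $\dt\theta$ and $\dt q$ yields exactly the norms, traces and $L^\infty$ quantities appearing in the statement. Your argument could in principle be repaired by making this same substitution inside each factor before invoking Lemma~\ref{GNET}, but that repair is precisely the paper's proof; as written, your proposal has a genuine gap.
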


\begin{proof}
Using the first equation of \eqref{Boussthetaj} one can replace 
$\dx q_{j -k} $  
by $f_{(j-k)} - (1 + \eps c'(\theta) \dt^{j-k-1} \theta $ in \eqref{deffg}. Using the representation of the 
$f_{(j-k)} $ given above, we see that    $g_{(j)}$ is a linear combinations of  two types of terms: first we have 
\begin{equation}
\label{termgj}
g_* =    c_* (\theta)  \dt^k q \dt^{l_1}  \theta  \ldots  \dt^{l_{\nu+1}} \theta 
\end{equation}
with  $1\le l_r \le j-k$ with $\sum_{r=1}^{\nu+1} l_r=  j - k + 1 $  and  second
$$
(1 + \eps c'(\theta) ) \dt^k q  \dt^{j-k-1} \theta
$$
with $ 1 \le  k \le j $.  
 The estimate   follows from Lemma~\ref{GNET} applied to  $\dt q$ and $\dt \theta$, the 
 $L^\infty$ norm of the coefficients $c_*(\theta)$ depending only on $\| \theta \|_{L^\infty}$. 
\end{proof} 

We can now estimate the $L^2 $ norm of $U_j$ applying Proposition~\ref{propL2} to \eqref{Boussthetaj}.  
Let 
\begin{equation}
\label{mfE}
\mfE  (t) = \sum_{0 \le  j \le n+1} \Big(  \big\| \dt^j U \big\|^2_{L^2(\mathcal E)}    
+ \delta^2  \big\| \dt^j \dx q \big\|^2_{L^2(\mathcal E)} + \alpha |  \av{\dt^j q} |^2 \Big). 
\end{equation}

\begin{proposition}\label{Estdtj}
Let $n\in {\mathbb N}$ and $T>0$, and assume that 
$U \in C^\infty ([0, T]; \HH^n) $ is a solution of \eqref{Boussthetanl} such that  there are constants $0<c_0\leq C_0$ and $0<m$ such that
$$
c_0 \le  1+\eps c'(\theta)\le C _0  \quad and \quad | \theta, \dt \theta ,\dt q | \le m  \qquad on \ [0, T] \times \mathcal E. 
$$
Then there are constants $C = C (C_0, c_0)$ and $\gamma  = \gamma (K_0, m ,  c_0^{-1}, C_0) $ such that 
$$
 \mfE(t)  \leq e^{\eps \gamma t}\big( C \mfE (0)  
     +  \eps \gamma \, t\big),
 $$
with $K_0$ is given by \eqref{M0} {\rm (}with $j=n+1${\rm )}.   

\end{proposition}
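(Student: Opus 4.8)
The plan is to treat each time derivative $U_j=(\theta_j,q_j)=\dt^j U$, for $0\le j\le n+1$, as a solution of the linearized transmission problem with a source, to apply the energy estimate of Proposition~\ref{propL2} to each of them, and to close the resulting inequality by Gronwall's lemma.

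\medskip
\noindent\textbf{Step 1: the linearized system satisfied by $U_j$.} Differentiating \eqref{Boussthetanl}--\eqref{Transs2} $j$ times in time produces \eqref{Boussthetaj}--\eqref{CBl2j}, which is precisely the non-homogeneous linearized system \eqref{Bousstheta_nh}--\eqref{CBl2_nh} with reference state $\uU=U$ and source $(\eps f_{(j)},\eps g_{(j)})$ defined in \eqref{deffg}; to read \eqref{CBl2j} as an instance of \eqref{CBl2_nh} one uses $\av{q_{j+1}}=\dt\av{q_j}$. Since $U\in C^\infty([0,T];\HH^n)$, each $U_j\in C^1([0,T];\HH)$, so Proposition~\ref{propL2} applies. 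Its hypotheses hold for $\uU=U$: $\jump{\uq}=0$ by \eqref{Transs1}, while $c_0\le 1+\eps c'(\theta)\le C_0$ and $|\theta,\dt\theta|\le m$ are assumed; the only quantity not directly bounded is $\dx\uq$, but the first equation of \eqref{Boussthetanl} gives $\dx q=-(1+\eps c'(\theta))\dt\theta$, whence $|\dx q|\le C_0 m$. For $j=0$ the source vanishes, since $U_0=U$ already solves \eqref{Boussthetanl} (note $\eps\dx(q^2)=2\eps q\dx q$).

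\medskip
\noindent\textbf{Step 2: summation and time integration.} Applying the differential inequality \eqref{estL2} to each $U_j$ gives
\[
\frac{d}{dt}E^{\rm tot}_{U}(U_j,\av{q_j})\le \frac{\eps}{2}\|(f_{(j)},g_{(j)})\|^2_{L^2(\cE)}+\frac{1}{2}\eps\gamma_0\|U_j\|^2_{L^2(\cE)},
\]
with $\gamma_0=\gamma_0(m,C_0)$ coming from $\dt c'(\theta)=c''(\theta)\dt\theta$ and the bound on $\dx q$. Set $\Sigma(t)=\sum_{0\le j\le n+1}E^{\rm tot}_{U}(U_j,\av{q_j})$. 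The coercivity $c_0\le 1+\eps c'(\theta)\le C_0$ yields $\kappa_1\mfE(t)\le\Sigma(t)\le\kappa_2\mfE(t)$ with $\kappa_i=\kappa_i(c_0,C_0)$, and moreover $\sum_j\|U_j\|^2_{L^2(\cE)}\le\mfE(t)$. Summing over $j$ and integrating from $0$ to $t$,
\[
\Sigma(t)\le\Sigma(0)+\frac{\eps}{2}\sum_{0\le j\le n+1}\|(f_{(j)},g_{(j)})\|^2_{L^2([0,t]\times\cE)}+\frac{1}{2}\eps\gamma_0\int_0^t\mfE(s)\,ds.
\]

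\medskip
\noindent\textbf{Step 3: the source terms and Gronwall.} Applying Lemmas~\ref{lemfj} and~\ref{lemgj} on the interval $[0,t]$ (so that the initial-data contribution carries the factor $t^{1/2}$) and squaring, each space-time norm is bounded by $C(m,K_0)\big(K_0^2 t+\sum_{1\le l\le j}\|\dt^l U\|^2_{L^2([0,t]\times\cE)}\big)$. Since $\sum_{1\le l\le j}\|\dt^l U\|^2_{L^2([0,t]\times\cE)}=\int_0^t\sum_{1\le l\le j}\|\dt^l U(s)\|^2_{L^2(\cE)}\,ds\le\int_0^t\mfE(s)\,ds$, the whole source sum is controlled by $C\big(K_0^2 t+\int_0^t\mfE(s)\,ds\big)$. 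Inserting this, using $\kappa_1\mfE\le\Sigma\le\kappa_2\mfE$, and collecting constants into $C=C(c_0,C_0)$ and $\gamma=\gamma(K_0,m,c_0^{-1},C_0)$ (absorbing $K_0^2$ and the factor $n+2$ into $\gamma$) gives
\[
\mfE(t)\le C\mfE(0)+\eps\gamma t+\eps\gamma\int_0^t\mfE(s)\,ds.
\]
As $a(t)=C\mfE(0)+\eps\gamma t$ is nondecreasing, Gronwall's lemma yields $\mfE(t)\le e^{\eps\gamma t}\big(C\mfE(0)+\eps\gamma t\big)$, which is the claim.

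\medskip
\noindent\textbf{Main obstacle.} The heart of the argument is Step~3. The commutator sources $f_{(j)},g_{(j)}$ contain the top-order derivative $\dt^{n+1}U$ together with lower-order factors, and must be estimated in space-time $L^2$ by the energy \emph{uniformly as $t\to 0$}. Ordinary interpolation constants degenerate on short time intervals; it is precisely the refined space-time Gagliardo--Nirenberg estimate of Lemma~\ref{GNET}, which isolates the $K_0 t^{1/2}$ initial-data term, that keeps the constants bounded and produces the linear-in-$t$ contribution $\eps\gamma t$ needed to close the estimate.
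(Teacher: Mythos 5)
Your proposal is correct and follows essentially the same route as the paper: apply the $L^2$ energy estimate of Proposition~\ref{propL2} (in the form \eqref{estL2}) to each $U_j=\dt^j U$ viewed as a solution of the linearized system \eqref{Boussthetaj}--\eqref{CBl2j}, bound $\dx q$ through the first equation, control the commutator sources via Lemmas~\ref{lemfj} and~\ref{lemgj}, and close with Gronwall. The only cosmetic difference is that the paper works directly with the time-integrated inequality rather than summing the differential form first, which changes nothing.
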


\begin{proof}
By ~\eqref{estL2}, one has 
$$
\begin{aligned}
 E^{\rm tot}_U\big(U_j(t),\av{q_j (t)}\big)\leq  
 E^{\rm tot}_U\big(U_j (0) ,\av{q(0)} \big)
 +\frac{\eps}{2} \int_0^t \Vert (f_{(j)},g_{(j)} )(s)\Vert^2_{L^2(\cE)}ds \\
 + \frac{1}{2}\eps \gamma_1 \int_0^t \Vert U_j  (s)\Vert^2_{L^2(\cE)}ds . 
\end{aligned}
$$
with  $\gamma_1 $ depends on $m$ and $\| \dx q \|_{L^\infty}$. By the equation 
we can estimate
$$
| \dx q  |  \le C_0 | \dt \theta | , 
$$
and thus $\gamma_1 \le \gamma (m, C_0) $.  
Using the lemmas above to estimate the $L^2$ norms of $f_{(j)}$ and $g_{(j)}$ and the inequalities
$$
\tilde c_0  \big\| \dt^j U \big\|^2_{L^2}    
+ \delta^2  \big\| \dt^j \dx q \big\|^2_{L^2}  \le 2 E^{\rm ext} (U_j) \le  \tilde C_0 \big\| \dt^j U \big\|^2_{L^2}    
+ \delta^2  \big\| \dt^j \dx q \big\|^2_{L^2}
$$
with $\tilde c_0 = \min\{1, c_0\} $, $\tilde C_0 = \max \{1, C_0\}$,  
one obtains that
$$
\tilde c_0 \mfE  (t) \le \tilde C_0  \,  \mfE(0) + \eps \gamma (K_0, C_0,  m )  
 \Big(t +   \int_0^t \mfE  (s) ds \Big) .
$$
The proposition follows by Gronwall's lemma. 
\end{proof}


\section{Estimates of  $x$-derivatives}
\label{dxD}

The next step is to prove $L^\infty $ estimates  for $\theta$ and $\dt U$ that are required in the statement of Proposition \ref{Estdtj}.  They  follow from $H^1$ estimates, 
that is from $L^2 $ estimates   of $(\dx \theta, \dt \dx  U)$.  The quantity $\dx q$ is explicitly given by  the first equation
but the control of $ \dx \theta$ is more involved because of the term 
$\dx^2 \dt q$ in the second equation. We will derive a second order o.d.e. for $\dx \theta$, and get estimates from it. 
 It turns out that to obtain closed estimates 
we need to control  $\dx \dt^j \theta $ up to $j = 4$.

\subsection{Estimates for $q$}

We show here that the $H^1$-norm (and therefore the $L^\infty$-norm) of $\dt^j q$ can be controlled up to $j=n$ in terms of the energy norm, the initial data, and the $L^\infty$ norm of $\theta$ and $\dt \theta$.
\begin{lemma}
\label{lemdxq} 
Let $n\in {\mathbb N}$ and $t>0$, and suppose that $U \in C^\infty ([0, T]; \HH^n)$ where $\HH^n = H^{n+1}\times H^{n+2}({\mathcal E})$ is a solution of \eqref{Boussthetanl} 
such that  there are constants $0<c_0\leq C_0$ and $0<m$ such that
\begin{align*}
& 
c_0 \le  1+\eps c'(\theta)\le C _0  \\
 &| \theta, \dt \theta  | \le m  \qquad on \ [0, T] \times \mathcal E, 
\\
&\mfE (t)  \le M  \ \ \qquad for  \  t \in [0, T]
\end{align*}
for some positive constants $m$ and $M$ with $\mfE$ given by \eqref{mfE}.
Then there are constant $C_1(m)$ and $C_2(m,K_0)$ such that 
$$
\| \dx q \|_{L^\infty[0,T ] \times \cE)} \le C_1(m) 
$$
and,  for $ 0  \le j \le n$ , 
$$
\big\| \dt ^j q (t ) \big\|^2_{H^1(\cE)}   \le  C_2(C_0,K_0)\big(M+\mfE(0)\bigr)
     + C(m,K_0)(1+M) \epsilon^2 t^2,  
$$
where $K_0$ is defined at \eqref{M0}. 
\end{lemma}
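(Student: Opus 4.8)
The plan is to read off $\dx q$ and all its time derivatives directly from the first equation, where they appear explicitly, and to control the resulting commutator terms not pointwise in time---which is impossible under the present hypotheses---but through the space--time Gagliardo--Nirenberg bounds of Lemma~\ref{lemfj}, recovering a pointwise-in-time statement via the fundamental theorem of calculus. The bound on $\dx q$ itself is immediate: the first equation of \eqref{Boussthetanl} gives $\dx q=-(1+\eps c'(\theta))\dt\theta$, and since $|\theta|\le m$ confines $1+\eps c'(\theta)$ to a range depending only on $m$ (uniformly in $\eps\in[0,1]$), the bound $\|\dt\theta\|_{L^\infty}\le m$ yields $\|\dx q\|_{L^\infty([0,T]\times\cE)}\le C_1(m)$.

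For the time derivatives I would use the differentiated first equation in \eqref{Boussthetaj}, which reads
$$
\dx q_j=\eps f_{(j)}-(1+\eps c'(\theta))\,\theta_{j+1},
$$
with $f_{(j)}$ as in \eqref{deffg}. Writing $\|\dt^j q(t)\|_{H^1(\cE)}^2=\|q_j(t)\|_{L^2(\cE)}^2+\|\dx q_j(t)\|_{L^2(\cE)}^2$ and bounding the first summand by $\mfE(t)\le M$, everything reduces to estimating $\|\dx q_j(t)\|_{L^2}$. The term $(1+\eps c'(\theta))\theta_{j+1}$ is harmless: at a fixed time its $L^2$ norm is $\le C_0\|\theta_{j+1}(t)\|_{L^2}\le C_0\sqrt{\mfE(t)}\le C_0\sqrt M$, which is exactly where the restriction $j\le n$ enters, since $\mfE$ controls $\theta_{j+1}$ only for $j+1\le n+1$. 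The genuinely delicate term is $\eps f_{(j)}(t)$: because $f_{(j)}$ contains products of two time derivatives of intermediate order (e.g.\ $c''(\theta)\,\theta_2\,\theta_{j-1}$), its $L^2(\cE)$ norm \emph{cannot} be bounded at a fixed time using only the $L^\infty$ control of $\theta$ and $\dt\theta$.

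To circumvent this I would not differentiate $\dx q_j$ once more in time---that would cost a derivative $\theta_{j+2}$, beyond the reach of $\mfE$ when $j=n$---but apply the fundamental theorem of calculus to $f_{(j)}$ itself, $f_{(j)}(t)=f_{(j)}(0)+\int_0^t\dt f_{(j)}\,ds$, so that by Cauchy--Schwarz in time
$$
\eps\,\|f_{(j)}(t)\|_{L^2(\cE)}\le \eps\,\|f_{(j)}(0)\|_{L^2(\cE)}+\eps\,t^{1/2}\,\|\dt f_{(j)}\|_{L^2([0,t]\times\cE)}.
$$
The initial term is a product of traces $\theta_l^{\rm in}$ controlled in $H^1$ by Proposition~\ref{estIDfin}, hence bounded by a constant depending on $K_0$, feeding the $C_2(C_0,K_0)(M+\mfE(0))$ part. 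The crucial point is that $\dt f_{(j)}$ has top time derivative $\theta_{j+1}$ with $j+1\le n+1$, so its space--time norm is exactly of the type controlled in Lemma~\ref{lemfj}: the estimate of Lemma~\ref{GNET} distributes the derivatives among the factors and gives $\|\dt f_{(j)}\|_{L^2([0,t]\times\cE)}\le C(m,K_0)\big(K_0 t^{1/2}+\sum_{l\le j+1}\|\dt^l\theta\|_{L^2([0,t]\times\cE)}\big)$. Since $\|\dt^l\theta\|_{L^2([0,t]\times\cE)}^2\le\int_0^t\mfE(s)\,ds\le Mt$, this is $\le C(m,K_0)(1+\sqrt M)\,t^{1/2}$. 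Multiplying by $\eps t^{1/2}$ produces a factor $\eps t$, and squaring the resulting bound for $\dx q_j(t)$ delivers the announced $C(m,K_0)(1+M)\eps^2t^2$ remainder alongside the $C_2(C_0,K_0)(M+\mfE(0))$ terms.

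The main obstacle is exactly the impossibility of a pointwise-in-time $L^2$ bound on $f_{(j)}$ from the available $L^\infty$ data; the entire detour through Lemma~\ref{GNET} and the fundamental theorem of calculus is what trades this for a space--time estimate, and it is this trade that manufactures the $\eps^2t^2$ remainder rather than a bounded contribution. A secondary but essential point of care is to differentiate the commutator $f_{(j)}$ and \emph{not} the already-solved relation for $\dx q_j$, so that the top time derivative never exceeds $n+1$ and $\mfE$ retains control of it throughout.
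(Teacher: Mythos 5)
Your proposal is correct and follows essentially the same route as the paper: read $\dx q_j$ off the differentiated mass equation as $\eps f_{(j)}-(1+\eps c'(\theta))\theta_{j+1}$, bound the second term pointwise in time by $\mfE$, and handle $f_{(j)}(t)$ via the fundamental theorem of calculus plus Cauchy--Schwarz in time combined with the space--time Gagliardo--Nirenberg bound of Lemma~\ref{GNET} on $\dt f_{(j)}$, which is exactly how the paper manufactures the $\eps^2 t^2$ remainder. The only (immaterial) bookkeeping difference is that the paper records the initial term as $\|f_{(j)}(0)\|_{L^2}^2\le C(K_0)\mfE(0)$ rather than as a pure $K_0$-constant, consistent with the $C_2(C_0,K_0)(M+\mfE(0))$ term in the statement.
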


\begin{proof}
The first estimate is immediate from the equation: 
$$
 \dx q  =  - (1+\eps c'(\theta))\dt \theta. 
$$
The $L^2$ norm of $ \dt^j  q$ is controlled by $M^{1/2}$. Thus, to prove the second inequality, it is sufficient to prove 
an $L^2$ estimate of $ \dt^j \dx q$. By \eqref{Boussthetaj} 
$$
 \dt^j \dx q = - (1+\eps c'(\theta))\dt ^{j+1} \theta + \eps f_{(j)} .
$$
The $L^2$-norm of the first term is of order $C_0 M^{1/2} $. For the second we use that 
$$
 \|   f_{(j)} (t) \|^2_{L^2 } \le  2\|   f_{(j)} (0) \|^2 _{L^2 } + 2 t \int_0^t  \| \dt f_{(j)} (t) \|^2_{L^2} ds . 
$$
Using Lemma~\ref{GNET}  one obtains that for $j \le n$, 
\begin{align*}
\int_0^t \| \dt f_{(j)} (t) \|^2_{L^2} ds &\le C(m, K_0) \big(\sum_{j \le n+1} \int_0^t  \big\| \dt^j \theta (t)  \big\|^2 _{L^2(\cE)} + t\big)\\
&\le C(m,  K_0)  (1+M )t. 
\end{align*}
Hence 
$$
\big\| \dt ^j \dx q (t ) \big\|^2_{L^2(\cE)}  \le C_0\,  M +  2\eps^2 \|   f_{(j)} (0) \|^2 _{L^2 } + C(m,  K_0)   (1+M)\eps^2 t^2.
$$
Noticing that   $ \|   f_{(j)} (0) \|^2 _{L^2 } \le C(K_0)\mfE(0) $, the lemma follows. 
\end{proof} 

\subsection{A linear  o.d.e.  for $\dx \theta_j$} 

In the second equation of \eqref{Boussthetanl} we use the first one to replace  $\dx^2 \dt q $ 
by $- \dt \dx  (1 + \eps c'(\theta) \dt \theta $, obtaining that 
\begin{equation*}
\label{ode1}
\dx \theta + \delta^2 \dt \dx \big( (1+\eps c'(\theta))\dt \theta \big)  =  - \dt q  +   2 \eps q  (1+\eps c'(\theta))\dt \theta.  
\end{equation*} 
We reorganize this equation using that 
$$
\dx \bigl[(1+\eps c'(\theta))\dt \theta \bigr] = (1+\eps c'(\theta) \dt Y + \eps  Y  \dt c'(\theta)    ,  
$$
with $Y= \dx \theta $ so that
$$
\dt \dx \bigl[(1+\eps c'(\theta))\dt \theta \bigr]= (1+\eps c'(\theta) \dt^2 Y + 2 \eps   \dt c'(\theta)  \dt Y + \eps \dt^2 c'(\theta) Y    ,  
$$
and $Y$ appears as a solution of the equation 
\begin{equation}
\label{ode0}
a_0 \delta^2 \dt^2 Y + \eps \delta^2 a_1 \dt Y + (1 +\eps \delta a_2)  Y = \chi + \eps\psi
\end{equation}
where 
\begin{equation}
\label{aedo}
a_0 = 1 + \eps c' , \quad a_1 =  2 \dt c' , \quad a_2 =    \delta  \dt^2 c' , \quad c' = c'(\theta), 
\end{equation}
and 
$$
 \chi =  - \dt q , \qquad \psi =   2  q  (1+\eps c'(\theta))\dt \theta.
$$

We differentiate \eqref{ode0} in time (alternately, on can start from \eqref{Boussthetaj}) to obtain an equation for $ Y_j = \dx \theta _j$. One has
$$
\begin{aligned}
&\dt^j (a_0 \dt^2 Y) = a_0 \dt^2 Y_j +  \eps j \dt c' \dt Y_j +\frac{\varepsilon}{2} j (j-1) \dt^2 c' Y_j  +\eps   \sum_{k = 3}^j  \big(^j_k \big)  \dt^k (c'(\theta) )  Y_{j +2 - k} 
\\
&\dt^j (a_1 \dt Y) = a_1 \dt Y_j +     2j \dt^2 c'   Y_j  +  2\sum_{k = 2}^j  \big(^j_k \big)  \dt^{k+1} c'  Y_{j +1 - k} 
\\
&\dt^j ( a_2 Y)  = \delta \Big( \dt^2 c'  Y_j +   \sum_{k = 1}^j  \big(^j_k \big)    \dt^{k+2} c'  Y_{j-k} \Big). 
\end{aligned}
$$
Thus 
\begin{equation}
\label{odej}
a_0 \delta^2  \dt^2  Y_j + \eps \delta^2 a_{1, j}   \dt Y_j  +   (1 + \eps \delta a_{2, j}  )  Y_j   =   
 \chi_j  + \eps  \psi _{(j)}   + \eps \delta \varphi_{(j)}
\end{equation}
with 
\begin{equation}
\label{aedoj} 
a_{1, j} = (j+2) \dt c' , \quad a_{2, j} = \frac{1}{2} ( j+1) (j+2)  \delta  \dt^{2} c' , 
\end{equation}
$$
\chi_j = - \dt^{j+1} q , \quad \psi_{(j)} = 2 \dt^j \big(  q (1 + \eps c') \dt \theta \big)  
$$
and 
$$
\varphi_{(j)} =  \sum_{k = 0 }^{j-1}   \beta_{k,j}  \delta    \dt^{j+2 - k}   (c'(\theta) ) \  Y_{ k } ,
$$
where the $\beta_{k,j} $ are numerical constants of no importance. 

For our purposes, we are looking at 
solutions which remain uniformly bounded (with respect to $\delta$ and $\eps$) for times of order $O(\eps^{-1})$. Since the ODE is singular  because of the coefficient  $\delta^2$ in front of the higher order term, a direct application of Duhamel's formula requires that the  right-hand-side is of order $O(\eps\delta)$ in order for the solution to be uniformly bounded solution on the $O(\eps^{-1})$ time scale. A refined study, that takes advantage of the oscillating nature of the solutions of the homogeneous equation, is required to handle the contribution of $O(\eps)$ and even $O(1)$ source terms. This is shown in the next subsection. 


\subsection{The basic estimate for the o.d.e}
We start with a simplified equation 
\begin{equation}
\label{sode}
a_0  \delta^2    \dt^2  Y   +   Y =   
\eps \delta \varphi + \eps\psi+\chi
\end{equation} 
 assuming  that the coefficient $a_0$ satisfies on $[0,T]\times \cE$
\begin{equation}
\label{odeb0}
0 <   c_0  \le a_0 \le C_0,   \quad  |  \dt a_0 |  \le  \eps m . 
\end{equation}

We are looking for  estimates of $Y(t)$ in $L^2$ and $L^\infty$. To avoid repetitions  we make a unique statement, 
using the notation $\BB$ for $L^2(\mathcal E)$ or $C^0 \cap L^\infty(\mathcal E)$. 
 
\begin{lemma}\label{lemsode}
Given constants $c_0$ $C_0$ and $m$, there are $C = C(c_0^{-1}, C_0) $ and  
$\gamma = \gamma (c_0^{-1},  m)$ such that 
for $T>0$, $\eps, \delta$  in $(0, 1] $,  $a_0$  satisfying  \eqref{odeb0}  
  and for  $\varphi \in L^\infty([0,T];{\mathbb B})$,
 $\psi \in W^{1,\infty}([0,T]; \BB )$ and $\chi \in W^{3,\infty}([0,T]; \BB )$, 
 the solutions to the o.d.e.  \eqref{sode} 
satisfy  the following estimate for all $0\leq t \leq T$,
\begin{equation}
\label{estode}
\begin{aligned}
\Vert Y(t),& \delta  \dt Y(t)  \Vert^2_\BB  
\\
&  \leq   C e^{(\eps+\delta^2) \gamma  t}
 \big(  \Vert \big (Y(0),\delta \dt Y(0))\Vert^2_{\BB} +  \eps \| \psi(0) \|^2_\BB + \| \chi(0)\|_{\BB}^2  +  \mfs (t) \big)
\end{aligned}
\end{equation}
with
\begin{equation}\label{defs}
\begin{aligned}
\mfs (t)  =&  \eps  \int_0^t  \| \varphi  (s), \psi (s), \dt \psi (s)  \|_{\BB}^2 {\rm d} s  \\
&+ \delta^2  \int_0^t  \Big(    \|  \dt^3 \chi (s) \|^2_{ \BB} + \eps m  \|  \dt^2 \chi(s)  \|^2_{ \BB } 
 + \eps^2 m^2  \|  \dt  \chi (s)  \|^2_{ \BB }\Big)  {\rm d}s 
 \\
 & +   \|\chi(t),  \delta \dt \chi (t) ,   \delta^2  \dt^2 \chi (t) \|^2_{ \BB} . 
\end{aligned}
\end{equation}
\end{lemma}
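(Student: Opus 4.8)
The plan is to run an energy estimate for the second-order equation \eqref{sode}, exploiting the fact that no $x$-derivative appears in it: \eqref{sode} is really a family of scalar ODEs in $t$ parametrised by $x\in\cE$. I would therefore derive a per-$x$ energy identity and recover the two cases $\BB=L^2(\cE)$ and $\BB=L^\infty(\cE)$ at once, by either integrating in $x$ (Cauchy--Schwarz on the bilinear terms) or taking the supremum in $x$ (using $\sup_x\int_0^t\le\int_0^t\sup_x$ together with H\"older); this is exactly what makes a single statement with the symbol $\BB$ possible. The natural energy is $E=\frac12\big(a_0\delta^2(\dt Y)^2+Y^2\big)$, which by \eqref{odeb0} is coercive, $E\simeq\Vert Y,\delta\dt Y\Vert^2_\BB$ with constants $C(c_0^{-1},C_0)$, so that controlling $E$ controls the left-hand side of \eqref{estode}.

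The main obstacle is that the energy only controls $\delta\dt Y$, so that $\dt Y$ is a priori of size $\delta^{-1}$: pairing a source with the multiplier $\dt Y$ costs a factor $\delta^{-1}$ and would force the source to be $O(\eps\delta)$, whereas $\chi$ is $O(1)$ and $\eps\psi$ is $O(\eps)$. This is precisely the oscillation phenomenon flagged before the lemma. To remove the $O(1)$ part I would first substitute $Z=Y-\chi$, turning \eqref{sode} into $a_0\delta^2\dt^2Z+Z=\eps\delta\varphi+\eps\psi-a_0\delta^2\dt^2\chi$, so that the dangerous $\chi$ is replaced by the genuinely small source $-a_0\delta^2\dt^2\chi$. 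Undoing the substitution at the end through $Y=Z+\chi$ and $\delta\dt Y=\delta\dt Z+\delta\dt\chi$ produces exactly the boundary contributions $\Vert\chi(t),\delta\dt\chi(t)\Vert^2_\BB$ appearing in $\mfs$ and the $\Vert\chi(0)\Vert^2$ term, while $Z(0)$ is controlled by the data together with $\chi$ and $\delta\dt\chi$ at $t=0$.

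Next I would differentiate $E_Z$ and integrate in time. The coefficient term $\frac12\delta^2\dt a_0(\dt Z)^2$ is bounded by $\frac{\eps m}{c_0}E_Z$ thanks to $\vert\dt a_0\vert\le\eps m$; this is the very mechanism producing the slow exponential rate. The source $\eps\delta\varphi$ is paired directly and yields $\eps\int_0^t\Vert\varphi\Vert_\BB^2$ and an $\eps E_Z$ feedback. For $\eps\psi$, I would work with the modified energy $E_Z-\eps\psi Z$ (equivalently, integrate by parts in time), which trades the derivative onto $\psi$ and gives $\eps\int_0^t\Vert\dt\psi\Vert_\BB^2$, an $\eps E_Z$ feedback, and a boundary term $\eps\psi(t)Z(t)$; absorbing the latter into $E_Z(t)$ costs $\eps^2\Vert\psi(t)\Vert^2_\BB$, which I then convert to $\eps\Vert\psi(0)\Vert^2_\BB+\eps\int_0^t\Vert\psi,\dt\psi\Vert^2_\BB$ by the fundamental theorem of calculus applied to $\Vert\psi\Vert^2_\BB$. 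Finally the source $-a_0\delta^2\dt^2\chi$, paired with $\dt Z$, is integrated by parts in time: this produces the boundary term $\delta^4\Vert\dt^2\chi(t)\Vert_\BB^2$ and the interior terms $\delta^2a_0\dt^3\chi\,Z$ and $\delta^2\dt a_0\,\dt^2\chi\,Z$. The former gives $\delta^2\int\Vert\dt^3\chi\Vert^2$; the latter, estimated with $\vert\dt a_0\vert\le\eps m$, gives $\eps m\,\delta^2\int\Vert\dt^2\chi\Vert^2$, and a further integration by parts that again gains a factor $\eps m$ from $\dt a_0$ produces the last term $\eps^2m^2\delta^2\int\Vert\dt\chi\Vert^2$; this is the descending chain in $\mfs$.

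Collecting everything yields a differential inequality of the shape $\frac{d}{dt}\Vert E_Z-\eps\psi Z\Vert\le(\eps+\delta^2)\gamma\,\Vert E_Z-\eps\psi Z\Vert+(\text{the source terms of }\mfs)$, in which \emph{every} feedback into the energy carries a prefactor $\eps$ or $\delta^2$; Gronwall's lemma then delivers the factor $e^{(\eps+\delta^2)\gamma t}$ and the estimate \eqref{estode}. The delicate points I expect are entirely of bookkeeping nature: keeping each energy feedback at order $\eps+\delta^2$ (so the rate stays slow), and organizing the integrations by parts so that the fatal factor $\delta^{-1}$ is always traded for a time-derivative of the smooth data $\psi,\chi$ or for the small quantity $\dt a_0$.
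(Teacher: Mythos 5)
Your strategy is essentially the paper's: the same energy $\frac12\big(a_0\delta^2(\dt Y)^2+Y^2\big)$ with multiplier $\dt Y$, the same key idea of using the equation to trade the $O(1)$ source $\chi$ for $\delta^2\dt^2\chi$ before pairing it with the energy, the same repeated integrations by parts exploiting $\abs{\dt a_0}\le \eps m$, Gronwall with $O(\eps+\delta^2)$ feedbacks, and the same pointwise-in-$x$ argument to cover $\BB=C^0\cap L^\infty$ (the paper does the substitution inside the Duhamel integral, writing $-\int Y\dt\chi=-\int\chi\dt\chi+\delta^2\int a_0\dt\chi\,\dt^2Y$, rather than changing the unknown, but this is the same computation).

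There is, however, one concrete gap in your bookkeeping. You perform the substitution $Z=Y-\chi$ on the \emph{full} solution, whose initial data are nonzero. Then $E_Z(0)$ contains $\Vert\delta\dt\chi(0)\Vert^2_\BB$ (since $\delta\dt Z(0)=\delta\dt Y(0)-\delta\dt\chi(0)$), and the $t=0$ boundary term of your integration by parts of $\int a_0\delta^2\dt^2\chi\,\dt Z$ contributes $\Vert\delta^2\dt^2\chi(0)\Vert^2_\BB$. Neither quantity appears on the right-hand side of \eqref{estode}: $\mfs(t)$ only contains $\Vert\delta\dt\chi(t)\Vert^2_\BB$ and $\Vert\delta^2\dt^2\chi(t)\Vert^2_\BB$ \emph{at time $t$}, and transporting these back to $t=0$ via $\delta\dt\chi(0)=\delta\dt\chi(t)-\delta\int_0^t\dt^2\chi$ costs a factor $t$ against the available $\eps m\,\delta^2\int_0^t\Vert\dt^2\chi\Vert^2_\BB$, which is not affordable on the time scale $t\sim(\eps+\delta^2)^{-1}$. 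So as written your argument proves \eqref{estode} with the extra terms $\Vert\delta\dt\chi(0)\Vert^2_\BB+\Vert\delta^2\dt^2\chi(0)\Vert^2_\BB$ on the right — harmless for the applications in Section \ref{dxD}, but not the stated inequality. The paper avoids this by first splitting $Y$, by linearity, into the free evolution of the initial data plus the three zero-initial-data responses to $\eps\delta\varphi$, $\eps\psi$ and $\chi$, and performing your manipulation only on the $\chi$-piece, for which $Y(0)=\dt Y(0)=0$ kills all $t=0$ boundary terms except the harmless $\frac12\Vert\chi(0)\Vert^2_\BB$ coming from the perfect derivative $-\int\chi\dt\chi$. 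Prepending that linear splitting to your argument closes the gap; the rest of your estimates (the $\eps$ and $\eps m\delta^2$ and $\delta^2$ feedbacks, the treatment of $\psi$ via the modified energy, the conversion of $\eps^2\Vert\psi(t)\Vert^2_\BB$ into $\eps\Vert\psi(0)\Vert^2_\BB+\eps\int_0^t\Vert\psi,\dt\psi\Vert^2_\BB$) are correct.
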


\begin{proof}[Proof when $\BB = L^2$]The natural energy is 
\begin{equation}
I =\frac{1}{2}\int_{\mathcal E}  \Big( a_0   ( \delta \dt Y \big)^2 +   Y ^2 \Big) dx  
\end{equation}
which satisfies 
$$
\tilde c_0  \big(  \| Y(t) \|_{L^2}^2 + \| \delta \dt Y(t) \|_{L^2}^2\big) 
 \le I(t) \le \tilde C_0   \big(  \| Y(t) \|_{L^2}^2 + \| \delta \dt Y(t) \|_{L^2}^2\big) 
$$
where $\tilde c_0 = \frac{1}{2}\min \{1, c_0\} , \tilde C_0 =   \frac{1}{2} \max\{1, C_0\}$.

 Multiplying the equation by $ \dt Y  $  and integrating over $ [0, t] \times \mathcal E$  yields
 \begin{equation} 
 \label{estI}
 \begin{aligned}
 I(t)  & =  I( 0) +  \frac{1}{2}   \int_{[0, t] \times \mathcal E } ( \dt a_0) (\delta  \dt Y)^2 
 +  \int_{[0, t] \times \mathcal E}   \Phi  \dt Y  
 \\
& = I(0) +  I_1(t) + I_2(t), 
\end{aligned}
 \end{equation}
 where $ \Phi  =  \eps \delta \varphi + \eps\psi+\chi$. 
Because  $| \dt a_0 | \le  \eps m $,  the first integral  $I_1$ in the right hand side can be estimated by 
\begin{equation}
\label{I1}
I_1(t) \le   \frac{1}{2}   \eps \tilde c_0^{-1}  m \int_0^t I(s) ds. 
\end{equation}

 In order to prove the lemma,  we note that
$Y$ is the sum of the three  solutions obtained  first when  $\Phi = \eps \delta \varphi $, 
second when $\Phi = \eps  \psi  $ and the initial conditions vanish and  third when 
$\Phi = \chi$ and the initial conditions vanish. We prove the estimate in  each case separately. 

\medskip

\noindent -- {\bf Case a) : } $\Phi = \eps \delta \varphi$.  In this case, the second integral $I_2$ is 
 $$
 \begin{aligned}
 I_2 (t)  	& =  \eps \int_{[0, t] \times \mathcal E}   \varphi      \delta \dt Y 
 \le \eps  \| \varphi (s) \|_{L^2([0, t] \times \cE)}     \|  \delta  \dt Y \|_{L^2([0, t] \times \cE)}
\\
&  \le 
 \frac{\eps }{2}\Big( \|  \delta \dt Y \|_{L^2([0, t] \times \cE)}^2 +  \| \varphi  \|_{L^2([0, t] \times \cE)} ^2 \Big)
  \\
& \le \frac{1}{2} \eps \tilde c_0^{-1}  \int_0^t I(s) ds +  \frac{1}{2} \eps \int_0^t  \| \varphi  \|^2_{L^2(\cE)}\, ds . 
  \end{aligned}
 $$
 Using this last estimate with \eqref{estI} and \eqref{I1} and Gronwall's lemma, this implies the estimate \eqref{estode} with $\psi=\psi=0$.

\medskip

\noindent  -- {\bf Case b) : } $\Phi = \eps \psi $ and $Y_{| t = 0} =\dt Y_{| t = 0} = 0$. 
 Then
 $$
 I_2 (t)
   =  \eps  \int_{[0, t] \times \mathcal E}  \psi  \dt Y 
  = \eps  \int_{\mathcal E}   ( \psi  Y)   (t)  dx 
 -  \eps \int_{[0, t] \times \mathcal E}   Y \dt \psi   . 
 $$
 The second term satisfies 
  $$
\Big|\eps \int_{[0, t] \times \mathcal E}   Y \dt \psi \Big|   \le  \eps \tilde c_0^{-1}    \int_0^t I(s) ds  +
  \frac{1}{2} \eps \int_0^t   \| \dt \psi (s)  \|^2_{  L^2}  ds . 
 $$
Moreover for all $\kappa \in (0, 1]$ 
 $$
\Big|   \int_{\mathcal E}   ( \psi  Y) (t) \Big|  \le  \frac{\kappa}{2 c_0}   I(t)  
+  \frac{1}{2 \kappa}  \|\psi (t)\|^2_{L^2(\mathcal E)} 
 $$
and 
 $$
 \begin{aligned}
 \|\psi (t)\|^2_{L^2(\mathcal E)} & =  \|\psi (0)\|^2_{L^2(\mathcal E)} 
    + 2 \int_{[0, t] \times (\mathcal E)} \psi \dt \psi \\
 & \le  \|\psi (0)\|^2_{L^2(\mathcal E)} +   \int_0^t  ( \| \psi (s)  \|^2_{L^2(\mathcal E)}  
 +  \| \dt \psi (s)  \|^2_{  L^2(\mathcal E)})   ds
  \end{aligned}
 $$
Therefore 
$$ 
\begin{aligned}
 I_2 (t) \le     \frac {\eps \kappa}{2 \tilde c_0}   I(t) &  + \eps  \tilde c_0^{-1}    \int_0^t I(s) ds     \\ 
 &  +       \frac{\eps}{2\kappa}  \| \psi(0)\|^2_{L^2}    +  \frac{\eps }{2 }(\frac{1}{\kappa}+1)
   \int_0^t   ( \| \psi (s)  \|^2_{L^2}  +  \| \dt \psi (s)  \|^2_{  L^2})  ds  
 \end{aligned}
 $$
 We use this estimate together with \eqref{I1} and \eqref{estI}. 
  Choosing $\kappa $ a small fraction of $\tilde c_0$ to absorb   the term in $I(t)$ from the left to the right, and using Gronwall's lemma, the estimate \eqref{estode} follows. 
 
\medskip

\noindent -- {\bf Case c) : } $\Phi = \chi $ and $Y_{| t = 0} =\dt Y_{| t = 0} = 0$. Then 
 \begin{equation}\label{I2chi}
 I_2 (t)
 =     \int_{[0, t] \times \mathcal E}  \chi  \dt Y 
  =    \int_{\mathcal E}   ( \chi   Y)   (t)  dx 
 -    \int_{[0, t] \times \mathcal E}   Y \dt \chi   =  J_1 + J_2
 \end{equation}
Let us first bound $J_1$ as follows
 \begin{equation} \label{J1}
   \int_{\mathcal E}   ( \chi   Y)   (t)  dx  \le \kappa \, I(t) + \frac{1}{c_0 \kappa} \|\chi(t)\|^2_{L^2(\mathcal E)}.
 \end{equation}
 The difference with the previous case is that there is no  $\eps$ in  so we have to treat the 
second term differently.  Using the equation, we note that 
 \begin{equation}
 \label{truc}
 J_2 =  -   \int_{[0, t] \times \mathcal E}   Y \dt \chi  =  - 
  \int_{[0, t] \times \mathcal E}  \chi  \dt  \chi  + 
    \delta^2  \int_{[0, t] \times \mathcal E} a_0   \dt \chi       \dt ^2 Y 
    = J_{2,1} + J_{2,2} 
\end{equation}
 The first term is
\begin{align}
\label{J21}
J_{2,1}=  -  \int_{[0, t] \times \mathcal E} \chi \dt  \chi  
=  \frac{1}{2} \big(  \| \chi(0) \|_{L^2}^2 -   \| \chi (t) ) \|_{L^2}^2 \big) \le  \frac{1}{2}    \| \chi(0) \|_{L^2}^2 . 
\end{align}
 
In the  second  term, we integrate by parts  : 
$$
\begin{aligned}
J_{2,2}=  &   \delta^2   \int_{[0, t] \times \mathcal E}  a_0   \dt  \chi     \dt^2  Y  
\\ 
= &  -  \delta^2   \int_{[0, t] \times \mathcal E}     a_0 \dt^2  \chi  \dt Y 
-  \delta^2  \int_{[0, t] \times \mathcal E}    \dt  a_0    \dt  \chi   \dt Y    
+ \delta^2  \int_{\mathcal E} a_0 \dt\chi \dt Y (t)  . 
\end{aligned}
$$
In the first integral, we integrate by parts again and get that 
\begin{equation}
\begin{aligned}
\label{J22}
&& J_{2,2} =   \Bigl[\delta^2   \int_{[0, t] \times \mathcal E}    \dt (a_0 \dt^2  \chi)   Y -   \delta^2  \int_{[0, t] \times \mathcal E}    \dt  a_0    \dt  \chi   \dt Y  \Bigr]
\\ 
 && + \Bigl[
 \delta^2  \int_{\mathcal E} (a_0 \dt \chi \dt Y) (t)  -  \delta^2  \int_{\mathcal E} (a_0 \dt^2 \chi   Y) (t) 
\Bigr] \\
&& =  \int_{[0,t]\times \mathcal E} F_1 + \int_{\mathcal E} F_2(t) 
\end{aligned}
\end{equation}
 The integrals over $[0, t] \times \mathcal E$ are dominated by
 \begin{equation}
 \begin{aligned}
 \label{FF1}
 \int_{[0,T]\times \mathcal E} F_1
 &  \le   
 \delta^2 \tilde c_0^{-1}  \int_0^t I(s) ds  + \frac{1}{2}  \delta^2 \int_0^t   \| \dt (a_0 \dt^2 \chi) \|^2_{L^2 }
  \\  
 & \hskip5cm + \frac{1}{2}  \delta^2 \eps^2  m^2  \int_0^t \|  \dt  \chi \|^2_{ L^2 }  . 
 \end{aligned}
 \end{equation}
The integrals over $\mathcal E$  at time $t$ are estimated by   
\begin{equation}
\begin{aligned}
\label{FF2}
\int_{\mathcal E}F_2(t) \le 
      \tilde C_0 \big(   \| \delta \dt\chi (t)   \|_{L^2} \|  \delta \dt Y (t) \|_{L^2} +  
 \| \delta^2  \dt^2 \chi (t)   \|_{L^2} \|  Y (t) \|_{L^2}\big) 
 \\
   \le \frac{\kappa}{2\tilde c_0} I(t)  + \frac{\tilde C_0^2 }{ 2 \kappa} \big(  \| \delta \dt\chi  (t)   \|^2_{L^2} +  \| \delta^2 \dt^2\chi  (t)   \|^2_{L^2}\big)
\end{aligned}
\end{equation}
We can therefore conclude using \eqref{I2chi}--\eqref{FF2} 
$$
\begin{aligned}
| I_2  (t) |     \le  (\frac{\kappa}{2\tilde c_0}  + \kappa) I(t)   +    & \delta^2 \tilde c_0^{-1}   \int_0^t I(s) ds 
\\ &  +   \frac{1}{2} \| \chi (0)\|^2_{L^2}  + \frac{\tilde C_0}{2\kappa}    \| (  \delta \dt \chi (t) ,   \delta^2  \dt^2 \chi (t) )\|^2_{ L^2}  +  \frac{1}{c_0 \kappa} \|\chi(t)\|^2_{L^2}
\\
&  +  \tilde C_0   \int_0^t  \Big( \delta^2   \|  \dt^3 \chi \|^2_{ L^2} + \eps  \delta^2 m \|  \dt^2 \chi \|^2_{ L^2} 
 + \eps^2  \delta^2  m^2 \|  \dt  \chi \|^2_{ L^2} \Big). 
\end{aligned}
$$
Gathering the estimates obtained for the three cases, and choosing $\kappa$ small enough, we obtain that there is $\gamma=\gamma(c_0^{-1},m)$ and $C = C(c_0^{-1}, C_0)$ such that 
$$
\begin{aligned}
| I  (t) |     \le \frac{1}{2}  I(t)   +    & \gamma  ( \eps + \delta^2)     \int_0^t I(s) ds 
\\ & 
 +  C\big( \eps \Vert \psi(0)\Vert_{L^2}+  \| \chi (0)\|^2_{L^2} \big) 
 + C    \| (\chi(t), \delta \dt \chi (t) ,   \delta^2  \dt^2 \chi (t) )\|^2_{ L^2} 
\\
&+ \eps  C \int_0^t \big( \Vert \varphi \Vert_{L^2} +
 \| \psi (s)  \|^2_{L^2}  +  \| \dt \psi (s)  \|^2_{  L^2}\big)  \\
&  +  \delta^2 C   \int_0^t  \Big(    \|  \dt^3 \chi \|^2_{ L^2} + \eps   m \|  \dt^2 \chi \|^2_{ L^2} 
 + \eps^2 m^2 \|  \dt  \chi \|^2_{ L^2} \Big). 
\end{aligned}
$$
Gronwall's lemma implies the result.   
\end{proof} 

\begin{proof}[Proof when $\BB = C^0 \cap L^\infty$]
The proof above applies for each fixed $x \in \cE$; taking the supremum in $x$ instead of integrating over $\cE$ therefore yields the result.  
\end{proof}

The complete  equation \eqref{odej}  reads 
\begin{equation}
\label{code}
a_0  \delta^2    \dt^2  Y   +   \eps \delta^2 a_1  \dt Y  + (1 + \eps \delta a_2) Y =   
\eps \delta \varphi + \eps\psi+\chi
\end{equation} 
and can be seen as a perturbation of \eqref{sode}. 

\begin{lemma}\label{lemcode}
Given constants $c_0$, $C_0$ and $m$, there are $C = C(c_0^{-1}, C_0) $, $\gamma = \gamma (c_0^{-1}, C_0, m)$ and a smooth nondecreasing function ${\mathfrak e}: \RR^+\to \RR^+$ with ${\mathfrak e}(0)=1$ such that 
for $T>0$, $\eps, \delta$  in $(0, 1] $,  $a_0$  satisfying  \eqref{odeb0}  and $a_1, a_2$ satisfying
\begin{equation}
\label{odeb1}
\|  a_1 \|_{L^\infty ([0,T] \times \mathcal E) }  \le m, \quad \|  a_2 \|_{L^\infty ([0,T] \times \mathcal E) }  \le m,
\end{equation}
  and for  $\varphi \in L^\infty([0,T];{\mathbb B})$,
 $\psi \in W^{1,\infty}([0,T]; \BB)$ and $\chi \in W^{3,\infty}([0,T]; \BB)$, 
 the solutions to the o.d.e \eqref{code} 
satisfies   estimate  
\begin{equation}
\label{estode2}
\begin{aligned}
\Vert Y(t&), \delta  \dt Y(t)  \Vert^2_\BB  
\\
&  \leq   C {\mathfrak e}\big( {(\eps+\delta^2) \gamma  t}\big)
 \big(  \Vert \big (Y(0),\delta \dt Y(0))\Vert^2_{\BB} +  \eps \| \psi(0) \|^2_\BB + \| \chi(0)\|_{\BB}^2  +  \mfs (t) \big),
\end{aligned}
\end{equation}
with ${\mathfrak s}(t)$ as in \eqref{defs}. 
\end{lemma}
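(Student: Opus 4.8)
The plan is to treat the full equation \eqref{code} as a perturbation of the simplified equation \eqref{sode}, for which Lemma~\ref{lemsode} already provides the estimate \eqref{estode}, and to close the resulting self-referential inequality by Gronwall's lemma. First I would move the two lower-order terms to the right-hand side and rewrite \eqref{code} as
\[
a_0\delta^2\dt^2 Y + Y = \eps\delta\,\widetilde\varphi + \eps\psi + \chi,
\qquad
\widetilde\varphi := \varphi - \delta a_1\dt Y - a_2 Y .
\]
The point of this bookkeeping is that both perturbations fit exactly into the $\eps\delta$-slot occupied by $\varphi$: the term $\eps\delta^2 a_1\dt Y = \eps\delta\,(\delta a_1\dt Y)$ pairs one factor $\delta$ with $\dt Y$ to form the controlled quantity $\delta\dt Y$, while $\eps\delta a_2 Y$ is $\eps\delta$ times the controlled $Y$. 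Placing them in the $\varphi$-slot rather than the $\psi$-slot is essential, since $\varphi$ enters the source $\mfs$ of \eqref{defs} only through $\eps\int_0^t\|\varphi\|_\BB^2$, so that no time derivative of these terms is ever needed.

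Next I would apply Lemma~\ref{lemsode} with $\widetilde\varphi$ in place of $\varphi$ and with $\psi,\chi$ unchanged. Using the bounds \eqref{odeb1} on $a_1,a_2$ one has $\|\widetilde\varphi\|_\BB^2\le C(\|\varphi\|_\BB^2 + m^2\,\|Y,\delta\dt Y\|_\BB^2)$ in both cases $\BB=L^2$ and $\BB=L^\infty$. Writing $E(t):=\|Y(t),\delta\dt Y(t)\|_\BB^2$ and collecting the unchanged contributions into the original source $\mfs(t)$ of \eqref{defs}, the conclusion of Lemma~\ref{lemsode} becomes
\[
E(t)\le C\,e^{(\eps+\delta^2)\gamma_1 t}\Big(E(0)+\eps\|\psi(0)\|_\BB^2+\|\chi(0)\|_\BB^2+\mfs(t)+C\eps\int_0^t E(s)\,ds\Big),
\]
where $\gamma_1=\gamma(c_0^{-1},m)$ is the constant produced by Lemma~\ref{lemsode} and the last term records the self-reference introduced by $\widetilde\varphi$.

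The final step, and the only feature genuinely new with respect to Lemma~\ref{lemsode}, is to close this inequality. The decisive observation is that the coefficient of $\int_0^t E$ is of order $\eps\le\eps+\delta^2$. Setting $\beta=(\eps+\delta^2)\gamma$ for a constant $\gamma=\gamma(c_0^{-1},C_0,m)$ chosen large enough that $\gamma\ge\gamma_1$ and $C^2m^2/\gamma\le 1$, and putting $F=e^{-\beta t}E$, the inequality reads $F(t)\le C\,B(t)+C^2 m^2\eps\int_0^t e^{\beta s}F(s)\,ds$ with $B$ the nondecreasing data-plus-source bracket. Gronwall's lemma then gives $F(t)\le C\,B(t)\exp\!\big(\tfrac{C^2m^2\eps}{\beta}(e^{\beta t}-1)\big)$, and since $\eps/\beta\le 1/\gamma$ the exponent is at most $e^{\beta t}-1$, a function of $\beta t=(\eps+\delta^2)\gamma t$ alone. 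Multiplying back by $e^{\beta t}$ yields \eqref{estode2} with the universal profile $\mfe(s)=e^{s}\exp(e^{s}-1)$, which is smooth, nondecreasing and satisfies $\mfe(0)=1$. The argument is identical for $\BB=L^2$ and $\BB=L^\infty$. The main obstacle is precisely this self-referential closure: one must verify that the dangerous term $\eps\delta^2 a_1\dt Y$, which contains the very derivative being estimated, carries enough powers of $\delta$ to land in the $\varphi$-slot, and that its feedback into the Gronwall integral comes with the small factor $\eps$, so that the amplification remains a function of $(\eps+\delta^2)\gamma t$ only.
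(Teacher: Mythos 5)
Your proposal is correct and follows essentially the same route as the paper: the lower-order terms $\eps\delta^2 a_1\dt Y+\eps\delta a_2 Y$ are moved to the right-hand side into the $\eps\delta\varphi$-slot of Lemma~\ref{lemsode} (precisely because that slot only requires $\eps\int_0^t\|\cdot\|_\BB^2$ and no time derivatives), and the resulting $\eps\int_0^t E(s)\,ds$ feedback is absorbed by Gronwall. Your version is in fact slightly more explicit than the paper's, in that it exhibits the double-exponential profile $\mfe(s)=e^s\exp(e^s-1)$ forced by applying Gronwall to a bound that already carries the factor $e^{(\eps+\delta^2)\gamma_1 t}$, which is exactly why the lemma is stated with a general nondecreasing $\mfe$ rather than a pure exponential.
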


\begin{proof}
We put the perturbation $ \eps \delta (a_2 \delta \dt Y + a_2 Y) = \eps \delta \phi $  on the right hand side, and use the estimate \eqref{estode}. When $\BB = L^2$, this terms contributes to the right hand side of the estimate adding a term
dominated by
$$
\eps  C e^{ \gamma (\eps + \delta^2) t} \int_0^t \| \phi (s) \|^2_{L^2}  {\rm d}s 
\le  \eps C m e^{ \gamma (\eps + \delta^2) t} \int_0^t (\| \delta \dt Y(s) \|^2_{L^2} + \| Y (s) \|^2 ){\rm d}s 
$$
which is absorbed from the right to the left by Gronwall's lemma, implying \eqref{estode2}.  When $\BB = C^0 \cap L^\infty$, there is a similar estimate for each fixed $x \in \cE$, and  one concludes taking sup 
as in the proof of the previous lemma. 
\end{proof}


\subsection{$L^\infty$ estimate of $Y = \dx \theta$} 

We turn back to \eqref{Boussthetanl} and consider a solution $U \in C^\infty ([0, T]; \HH^n) $ such that 
\begin{align}
\label{bound0}
 & 0 < c_0 \le 1 + \eps c'(\theta) \le C_0, \quad  | \theta,  \dt \theta, \delta \dt^2 \theta, q | \le m_1  
 \qquad on   \  [0, T]\times \cE, 
\\
\label{bounde}
& \sum_{j \le n+1} \big\| \dt^j \theta(t),\delta\dt^{j+1}\theta(t),\dt^j q (t) \big\|^2_{L^2(\cE)} \le M  \qquad for  \  t \in [0, T].
\end{align} 

With  $a _0 = 1 + \eps c'(\theta)$ and coefficients $a_{1}, a_{2} $ given by \eqref{aedo}, the conditions 
\eqref{odeb0} and \eqref{odeb1} are satisfied, with a constant,  $m= m(m_1) $ and Lemma \ref{lemcode} can be used to provide a bound on $Y=\dx\theta$.
\begin{lemma} 
\label{Yinfty}
There are $C = C(c_0^{-1}, C_0)$ and $\gamma = \gamma (c_0^{-1}, C_0, m_1)$ and a nondecreasing function ${\mathfrak e}:\RR^+\to \RR^+$ with ${\mathfrak e}(0)=1$ such that if 
$U \in C^\infty ([0, T]; \HH^n) $ is a solution of \eqref{Boussthetanl} satisfying \eqref{bound0}, then $Y = \dx \theta$  satisfies 
$$
\| (Y (t) ,\delta\dt Y(t)\|_{L^{\infty} }^2 \! \le \!  C {\mathfrak e}\big( { \gamma  (\eps + \delta^2) t }\big)
\Big(  \| Y (0),\delta\dt Y(0) \|^2_{L^{\infty} } + (1 + t( \eps + \delta^2) c(m_1))  \mfm (t) \Big) 
$$
where
\begin{equation}
\label{mfm} 
\mfm (t) = \sup_{0 \le s\le t} \Big(  \sum_{l \le 2} \| \dt^l \theta (s) \|^2_{L^\infty}  + \sum_{l \le 4} \| \dt^l  q  (s) \|^2_{L^\infty}\Big) .
\end{equation}
\end{lemma}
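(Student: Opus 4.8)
The plan is to read off the statement as the $j=0$ instance of the second-order o.d.e.\ analysis already prepared in Lemma~\ref{lemcode}, applied in the space $\BB = C^0\cap L^\infty(\cE)$. Recall from \eqref{ode0} that $Y=\dx\theta$ solves
$$
a_0\delta^2\dt^2 Y + \eps\delta^2 a_1\dt Y + (1+\eps\delta a_2)Y = \chi + \eps\psi,
$$
with the coefficients $a_0 = 1+\eps c'(\theta)$, $a_1 = 2\dt c'$, $a_2 = \delta\dt^2 c'$ of \eqref{aedo} and the source terms $\chi = -\dt q$ and $\psi = 2q(1+\eps c'(\theta))\dt\theta$; this is exactly \eqref{code} with $\varphi = 0$ (the term $\varphi_{(j)}$ in \eqref{odej} being empty when $j=0$, and $a_{1,0}=a_1$, $a_{2,0}=a_2$). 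First I would check that the bound \eqref{bound0} supplies the hypotheses \eqref{odeb0} and \eqref{odeb1}: one has $c_0\le a_0\le C_0$ and $|\dt a_0| = \eps|c''(\theta)\dt\theta|\le\eps m$, while $a_1 = 2c''(\theta)\dt\theta$ and $a_2 = \delta\big(c'''(\theta)(\dt\theta)^2 + c''(\theta)\dt^2\theta\big)$ are bounded in $L^\infty$ by some $m = m(m_1)$, precisely because \eqref{bound0} controls $\theta,\dt\theta$ and $\delta\dt^2\theta$. With these verified, Lemma~\ref{lemcode} applies and yields the estimate \eqref{estode2} for $\|(Y,\delta\dt Y)\|_{L^\infty}$, whose prefactor $\mfe\big((\eps+\delta^2)\gamma t\big)$ is exactly the one appearing in the claim.

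It then remains to bound the right-hand side of \eqref{estode2}, i.e.\ $\eps\|\psi(0)\|_\BB^2$, $\|\chi(0)\|_\BB^2$ and $\mfs(t)$ from \eqref{defs}, by $\big(1 + t(\eps+\delta^2)c(m_1)\big)\mfm(t)$ with $\mfm$ as in \eqref{mfm}. The non-integrated pieces are immediate: since $\chi = -\dt q$, the triple $(\chi,\delta\dt\chi,\delta^2\dt^2\chi)$ equals $(-\dt q,-\delta\dt^2 q,-\delta^2\dt^3 q)$, and using $\delta\le 1$ each is controlled in $L^\infty$ by the $\dt^l q$ ($l\le 4$) contributions to $\mfm(t)$; likewise $\|\chi(0)\|_\BB^2\le\mfm(t)$, and via the bound $|q|\le m_1$ from \eqref{bound0}, $\eps\|\psi(0)\|_\BB^2\le c(m_1)\mfm(t)$. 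The integral pieces of $\mfs(t)$ all carry a prefactor $\eps$ or $\delta^2$: the $\delta^2$-integral involves $\dt^3\chi = -\dt^4 q$, $\dt^2\chi = -\dt^3 q$, $\dt\chi = -\dt^2 q$, each bounded in $L^\infty$ by $\mfm(t)$ (this is why $\mfm$ must carry $q$-derivatives up to order $4$), giving a term $\le\delta^2 t\,c(m_1)\mfm(t)$; and the $\eps$-integral of $\|\psi,\dt\psi\|_\BB^2$ is estimated by expanding $\psi$ and $\dt\psi$ as products in which every factor except one is controlled by $m_1$ via \eqref{bound0}, the remaining factor (namely $\dt q$, $\dt\theta$ or $\dt^2\theta$) being controlled by $\mfm(t)$, giving $\le\eps t\,c(m_1)\mfm(t)$. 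Summing, $\mfs(t)\le C\mfm(t) + t(\eps+\delta^2)c(m_1)\mfm(t)$, and substituting into \eqref{estode2} produces the asserted bound.

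The main obstacle is purely the source-term bookkeeping of the previous paragraph: one must verify that no factor of $\mfm(t)$ appears to a power higher than one, and this is exactly where the a priori $L^\infty$ bounds \eqref{bound0} on $\theta,\dt\theta,\delta\dt^2\theta$ and $q$ are used to peel off all but one derivative factor in each monomial. The second point requiring care is the derivative count: the appearance of $\dt^3\chi = -\dt^4 q$ in $\mfs$ forces $\mfm$ to include $\|\dt^l q\|_{L^\infty}$ up to $l=4$, while the coefficient $a_2 = \delta\dt^2 c'$ together with $\dt\psi$ forces the inclusion of $\|\dt^l\theta\|_{L^\infty}$ up to $l=2$; these are precisely the norms gathered in the definition \eqref{mfm} of $\mfm$. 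No estimate beyond Lemma~\ref{lemcode} is needed.
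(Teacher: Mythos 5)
Your proposal is correct and follows essentially the same route as the paper: the paper likewise applies Lemma~\ref{lemcode} with $\BB = C^0\cap L^\infty$ to the o.d.e.\ \eqref{ode0} (noting $\varphi=0$ and that \eqref{bound0} yields \eqref{odeb0}--\eqref{odeb1} with $m=m(m_1)$), and then bounds $\dt^k\chi=-\dt^{k+1}q$ for $k\le 3$ and $\psi,\dt\psi$ by $C(m_1)\mfm(t)$ exactly as you do. Your source-term bookkeeping matches the paper's, so nothing further is needed.
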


\begin{proof} Recalling that  $Y$ satisfies the o.d.e \eqref{ode0}, we use Lemma~\ref{lemcode} with $\BB = C^0 \cap L^\infty$ remarking that there is no $\varphi$ 
in the right and side. Then  we just have to control time derivatives of $\chi = - \dt q$ and $\psi = 2 q (1 + \eps c'(\theta)) \dt \theta $.  

\smallskip

\noindent Note that for $k \le 3$ one has 
$$
\| \dt^k \chi (t)  \|^2_{L^\infty} = \| \dt^{k+1} q  (t)  \|^2_{L^\infty} \le \mfm (t).
$$ 
Similarly,  
$$
\| \psi (t) , \dt \psi \|^2_{L^\infty}  \le C (m_1) \mfm(t) . 
$$
and 
$$
\eps \| \psi (0) \|^2_{L^\infty} + \eps \int_0^t \| \psi (s) , \dt \psi (s)  \|^2_{L^\infty}  ds \le 
C (m_1) ( \eps + \eps t)  \mfm(t) . 
$$
The estimate follows then using Lemma~\ref{lemcode}. 
\end{proof}


\subsection{$L^2$ estimates of $\dx \theta_j$} 
 
 Our goal here is to give an estimate of 
\begin{equation}
\label{mfE1} 
\mfE_1 (t)  = \sum_{j \le n-3} \big(  \| \dt^j Y (t)\|_{L^2 (\cE)}^2+   \| \delta \dt^{j+1} Y (t)\|_{L^2 (\cE)}^2\big). 
\end{equation} 
In order to use Lemma~\ref{GNET} we also introduce 
\begin{equation}
\label{M1}
K_0= \sum_{1 \le l < n+1}  \| \dt^l  U  _{| t = 0}\|_{H^{1} (\mathcal E) } 
\quad\mbox{ and }\quad
 K_1 = \sum_{1 \le l < n-3}  \| \dt^l \dx \theta  _{| t = 0}\|_{H^{1} (\mathcal E) }.
\end{equation}
As usual, ${\mathfrak e}$ denotes a nondecreasing function such that ${\mathfrak e}(0)=1$.
\begin{lemma}
\label{YL2}
There are constants $C = C(c_0^{-1}, C_0)$, $\gamma = \gamma (c_0^{-1}, C_0, m_1, K_0)$ and 
$M_1  = M_1 (m_1, K_0, K_1, M ) $ where $c_0,C_0,m_1,K_0,K_1$ are defined by \eqref{bound0}, \eqref{M1} and a nondecreasing function ${\mathfrak e}:\RR^+\to \RR^+$ with ${\mathfrak e}(0)=1$ such that if 
$U \in C^\infty ([0, T]; \HH^n) $ is a solution of \eqref{Boussthetanl} satisfying \eqref{bound0} and \eqref{bounde},
then
\begin{equation}
\label{blop}
\begin{aligned}
\mfE_1(t)  \le  C {\mathfrak e}\big({ \gamma  (\eps + \delta^2) t }\big) \Big( \mfE_1 (0) 
  +    (1 + ( \eps + \delta^2) t  )    (1 + \| Y \|_{L^\infty([0, t] \times \cE)}^2 )M_1   \Big)
\end{aligned}
\end{equation}
where $\mfE_1$ is given by \eqref{mfE1}.
\end{lemma}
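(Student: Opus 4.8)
The plan is to treat each $Y_j=\dt^j Y=\dx\theta_j$ separately through the scalar o.d.e.\ \eqref{odej} (equivalently \eqref{code} with the $j$-dependent coefficients) that it satisfies, apply the basic o.d.e.\ estimate of Lemma~\ref{lemcode} with $\BB=L^2(\cE)$, sum over $0\le j\le n-3$, and close the argument with Gronwall's lemma. First I would check that the hypotheses of Lemma~\ref{lemcode} hold uniformly: with $a_0=1+\eps c'(\theta)$ and the coefficients $a_{1,j},a_{2,j}$ given by \eqref{aedo}--\eqref{aedoj}, the bounds \eqref{bound0} on $\theta,\dt\theta,\delta\dt^2\theta$ guarantee \eqref{odeb0} and \eqref{odeb1} with a constant $m=m(m_1)$, uniformly in $j\le n-3$ and in $\eps,\delta\in(0,1]$. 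Applying \eqref{estode2} to $Y_j$ then gives, for each such $j$,
\[
\|Y_j(t),\delta\dt Y_j(t)\|_{L^2}^2\le C\,\mfe\big(\gamma(\eps+\delta^2)t\big)\Big(\|Y_j(0),\delta\dt Y_j(0)\|_{L^2}^2+\eps\|\psi_{(j)}(0)\|_{L^2}^2+\|\chi_j(0)\|_{L^2}^2+\mfs_j(t)\Big),
\]
where $\mfs_j$ is the functional \eqref{defs} built from the source terms $\chi_j=-\dt^{j+1}q$, $\psi_{(j)}$ and $\varphi_{(j)}$ of \eqref{odej}.

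The second step is to estimate these source contributions. The terms carrying $\chi_j$ are the most elementary: at $t=0$ they are dominated by $K_0$, while the integral contributions $\delta^2\int_0^t(\|\dt^3\chi_j\|^2+\dots)$ and the pointwise terms $\|\chi_j(t),\delta\dt\chi_j(t),\delta^2\dt^2\chi_j(t)\|^2$ involve $\dt^l q$ only up to $l=j+4\le n+1$, hence are controlled by $M$ through \eqref{bounde}. The terms carrying $\psi_{(j)}=2\dt^j\big(q(1+\eps c'(\theta))\dt\theta\big)$ are nonlinear products; I would bound them, and their first time derivative, exactly as $f_{(j)}$ and $g_{(j)}$ were bounded in Lemmas~\ref{lemfj}--\ref{lemgj}, using the space-time Gagliardo--Nirenberg inequality of Lemma~\ref{GNET} to trade $L^\infty$ norms of low-order derivatives (controlled by $m_1$ and $K_0$) against $L^2$ norms of high-order derivatives (controlled by $M$). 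Since these contributions enter \eqref{defs} and \eqref{estode2} with a factor $\eps$, they produce on the right-hand side a term of the form $(1+(\eps+\delta^2)t)M_1$.

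The delicate term is the coupling $\varphi_{(j)}=\sum_{k=0}^{j-1}\beta_{k,j}\,\delta\,\dt^{j+2-k}(c'(\theta))\,\dt^k Y$, which involves the lower-order unknowns $\dt^k Y$ and therefore links the estimates for different $j$. Here I would again call on Lemma~\ref{GNET} to distribute the $j+2$ available time derivatives between the coefficient $\dt^{j+2-k}(c'(\theta))$ and the factor $\dt^k Y$: the lowest-order factor $Y$ (the case $k=0$) is placed in $L^\infty$, which is precisely what produces the factor $1+\|Y\|_{L^\infty([0,t]\times\cE)}^2$ in \eqref{blop}, while the remaining derivatives are absorbed into $\mfE$ (for the $\theta$-factors, via \eqref{bounde}) and into $\mfE_1$ (for the $Y$-factors, whose order never exceeds $n-4$). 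The crucial bookkeeping is that $\varphi_{(j)}$ carries a factor $\delta$ and enters \eqref{defs} only through $\eps\int_0^t\|\varphi_{(j)}\|_{L^2}^2\,ds$, so that, after summation, the whole coupling contributes at most
\[
\gamma(\eps+\delta^2)\big(1+\|Y\|_{L^\infty([0,t]\times\cE)}^2\big)\int_0^t\mfE_1(s)\,ds.
\]

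Finally I would sum the per-$j$ estimates, noting that $\sum_{j\le n-3}\|Y_j(t),\delta\dt Y_j(t)\|_{L^2}^2$ is exactly $\mfE_1(t)$. The initial-data terms collapse to $\mfE_1(0)$ up to $K_0,K_1$-dependent constants absorbed into $M_1$, the $\chi_j$- and $\psi_{(j)}$-contributions assemble into the factor $(1+(\eps+\delta^2)t)(1+\|Y\|_{L^\infty}^2)M_1$, and the $\varphi_{(j)}$-coupling leaves only the integral $\int_0^t\mfE_1(s)\,ds$ multiplied by the small factor $\gamma(\eps+\delta^2)(1+\|Y\|_{L^\infty}^2)$. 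Gronwall's lemma then absorbs this last term and yields \eqref{blop}, with the exponential $\mfe(\gamma(\eps+\delta^2)t)$ possibly adjusted (still nondecreasing with value $1$ at $0$). The main obstacle I anticipate is precisely the control of $\varphi_{(j)}$: one must choose the Gagliardo--Nirenberg exponents so that no time derivative of order exceeding $n+1$ in $\theta,q$, nor any $Y$-derivative outside the range of $\mfE_1$, is ever required, and simultaneously keep track of the $\eps$ and $\delta$ powers so that the coupling remains a genuine $O(\eps+\delta^2)$ perturbation that Gronwall can absorb.
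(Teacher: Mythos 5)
Your proposal follows the paper's proof essentially step for step: apply Lemma~\ref{lemcode} with $\BB=L^2$ to the equation \eqref{odej} satisfied by each $Y_j$, verify \eqref{odeb0}--\eqref{odeb1} with $m=m(m_1)$, estimate the three source contributions $\chi_j$, $\psi_{(j)}$, $\varphi_{(j)}$ via Lemma~\ref{GNET}, and close with Gronwall after summing over $j\le n-3$. One bookkeeping point in your treatment of $\varphi_{(j)}$ needs correcting, though: you claim the coupling contributes $\gamma(\eps+\delta^2)\big(1+\|Y\|_{L^\infty}^2\big)\int_0^t\mfE_1(s)\,ds$ and that Gronwall then yields \eqref{blop} with rate $\gamma(\eps+\delta^2)$. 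As written, Gronwall would instead produce an exponential of rate $\gamma(\eps+\delta^2)\big(1+\|Y\|_{L^\infty}^2\big)$, i.e.\ a $\gamma$ depending on $\|Y\|_{L^\infty}$ --- which is not what the statement asserts and would complicate the bootstrap of Section~\ref{final}, where $\|Y\|_{L^\infty}$ is not part of the continuation quantity $m(t)$. The fix, which is what the paper does, is to apply Young's inequality across the Gagliardo--Nirenberg product
$\|\dt^kY\|_{L^p}\le C\big(K_1+\|Y\|_{L^\infty}\big)^{1-2/p}\big(K_1+\sum_{l\le n-3}\|\dt^lY\|_{L^2}\big)^{2/p}$
so that the term carrying $\int_0^t\mfE_1(s)\,ds$ has a coefficient $\eps\,C(m_1,K_0)$ free of $\|Y\|_{L^\infty}$, while the factor $\big(K_1+\|Y\|_{L^\infty}\big)^2$ multiplies only quantities already bounded by $K_0^2+Mt$; then Gronwall absorbs the $\mfE_1$-integral with the stated rate and the $\|Y\|_{L^\infty}^2$-dependence lands, as required, in the additive term of \eqref{blop}.
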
 

\begin{proof}
We apply Lemma~\ref{lemcode} with $\BB = L^2$  to the equation \eqref{odej}. 
With  $a _0 = 1 + \eps c'(\theta)$ and coefficients $a_{1, j}, a_{2, j} $ given by \eqref{aedoj}, the conditions 
\eqref{odeb0} and \eqref{odeb1} are satisfied for some $m = m(m_1)$ if  \eqref{bound0} holds. Thus we have an estimate  \eqref{estode2} for $\| \dt^j Y \|_{L^2}$ with source term $\mfs_j$, to which  
 contribute the three terms in the right hand side of  \eqref{odej}. 

\medskip

\noindent -- {\bf a)} The first contributor is  $\chi_j = - \dt^{j+1} q$. Thus, for  $ j + 4 \le n +1 $, one has 
\begin{equation*}
\sum_{0 \le l \le 3} \| \dt^l   \chi_j (t) \|^2 _{L^2}   \le    M .  
\end{equation*} 
  Moreover, the integrated terms are  $O (\delta^2 + m^2 \eps^2 \delta^2) t  M$.  Hence the total contribution of 
  $\chi_j$  satisfies 
$$
 \mfs_{\chi_j} (t)  \le C (1 + (\eps + \delta^2) t ) M. 
$$

\medskip

\noindent -- {\bf b) } The second contributor  is $ \psi_{(j)} = 2 \dt^j \big(  q (1 + \eps c') \dt \theta \big)  $. 
Expanding the derivatives, we obtain a sum of terms of the form \eqref{termgj} and   
Lemma ~\ref{GNET} implies that for $j + 2 \le n+1$, 
\begin{equation*}
\label{estpsij}
\eps \int_0^t \|   \psi_{(j)}(s)  , \dt \psi_{(j)}(s)  \|^2_{L^2} ds  
\le  \eps  C  ( m_1,  K_0)  \Big( K_0 +  \int_0^t \sum_{j=1}^{n+1}\Vert \dt^j U(s)\Vert_{L^2} {\rm d}s \Big)  
\end{equation*}
where $K_0$ is given by \eqref{M1}. 

In addition, we note that all the terms $ \dt^{l} \theta (0)$ for $l \le j+1 \le n $ belong 
to $H^1(\cE)$ with norm   at most  $ K_0 $. Therefore 
\begin{equation*}
\| \psi_{(j)} (0) \|_{L^2}  \le C(K_0 ) M. 
\end{equation*}
Hence
the total contribution of 
  $\psi_{(j)}$  satisfies 
$$
 \mfs_{\psi_j} (t)  \le C  (1 +M+ \eps  t  M). 
$$

\medskip
 
\noindent -- {\bf c) } The third  contributor    $ \varphi_{(j)}$  is a linear combination of terms 
$$
\varphi_* =  \delta      c^{(l+1)}  (\theta)  \theta_{j_1} \ldots \theta_{j_l} \  \dt^k  Y 
$$
with $k < j$, $l \ge 1$ and $j_1 + \ldots + j_l = j+2 -k$.  First we use Lemma~\ref{GNET} to 
control $L^p$ norms  of $\dt^k Y $. For $  k / (n-3) \le 2/ p  \le 1 $, there holds 
 \begin{equation*}
\begin{aligned}
\big\| \dt^k Y & \big\|_{L^p([0,T] \times \mathcal E)} \\
 & \le C  
\big( K_1+ \big\| Y\big\|_{L^\infty([0,T] \times \mathcal E)} \big)^{1 - \frac{2}{p}}
\big( K_1   +   \sum_{l \le n -3}  \big\| \dt^l Y\big\|_{L^2([0,T] \times \mathcal E)} \big) ^{\frac{2}{p}}. 
\end{aligned}
\end{equation*}
 Let us consider two cases $l\ge 2$ and then $l=1$ to control terms involving $\theta$.

\smallskip

\noindent {\it Case $l\ge 2$:} By Lemma~\ref{GNET} applied to $\dt \theta$, one has for  $ (j_l  -1)  / n  \le 2/ p_l   \le 1 $, 
 $$
 \big\| \dt^{j_l} \theta \big\|_{L^{p_l} ([0,T] \times \mathcal E)} \\
 \le C  
\big( K_0 + \big\| \dt  \theta \big\|_{L^\infty([0,T] \times \mathcal E)} \big)^{1 - \frac{2}{p_l}}
\big( K_0   +   \sum_{l \le n+1}  \big\| \dt^l\theta \big\|_{L^2([0,T] \times \mathcal E)} \big) ^{\frac{2}{p_l}} .  $$
Then recalling that $j\le n-3$ 
 $$
 \sum \frac{j_l-1 }{n} + \frac{k}{n-3} \le   \frac{j  -k   }{n } + \frac{k}{n-3}  \le 1 
 $$
so that one can find indices $p_l $ and $p$ such that 
$$  (j_l  -1)  / n  \le 2/ p_l   \le 1, \qquad  k / (n-3) \le 2/ p  \le 1 $$
and 
$$2 / p + \sum 2/ p_l = 1.$$ 
Thus,  
$$
\begin{aligned}
\| \varphi_*  \|^2_{L^2 ([0, t] \times \cE )} \le&  \delta^2 C (K_0, m_1) \Big(  K_1^2 + \int_0^t \mfE_1 (s) ds \Big)
 \\
 &+ 
\delta^2 C (K_0, m_1)   \big( K_1+  \| Y \|_{L^\infty([0,T] \times \mathcal E)})^2
\Big( K_0^2 + M t  \Big) .  
\end{aligned}
$$
 If $l = 1$,  it remains terms of the form 
 $$
 \varphi_* = \delta  c^{(2)} (\theta) \dt ^{j+2 -k} \theta  \, \dt^k Y . 
 $$ 
 We apply  Lemma~\ref{GNET} to  $\delta \dt^2 \theta$: for  $ (j-k)   / (n-1)   \le 2/ p'    \le 1 $, 
 $$
 \begin{aligned}
 \big\| \delta  \dt^{j+2-k} \theta \big\|_{L^{p'} ([0,T] \times \mathcal E)} 
 \le C  
\big( K_0 &+ \big\| \delta \dt ^2  \theta \big\|_{L^\infty([0,T] \times \mathcal E)} \big)^{1 - \frac{2}{p' }}\\
&\times \big( K_0   +   \sum_{l \le n+1}  \big\| \delta \dt^l\theta \big\|_{L^2([0,T] \times \mathcal E)} \big) ^{\frac{2}{p'}} . 
\end{aligned}
 $$
Remarking that 
 $$
 \frac{j - k}{n-1} + \frac{k}{n-3} \le \frac{j}{n-3} \le 1,  
 $$
 we can choose indices  $p$ and $p'$ such that    $  k / (n-3) \le 2/ p  \le 1 $, $ ( j -k )   / (n-1)  \le 2/ p_l   \le 1 $, 
and $2 / p + 2/ p' = 1$ and in this case we obtain that
 $$
\begin{aligned}
 \| \varphi_*  \|^2_{L^2 ([0, t] \times \cE )} \le&    C (m_1)  ( K_0^2 + m_1^2 ) \Big(  K_1^2 + \int_0^t \mfE_1 (s) {\rm d}s \Big)
 \\
 &+
 C ( m_1)   \big( K_1+  \| Y \|_{L^\infty([0,T] \times \mathcal E)})^2
\big( K_0^2  + M t    \big) .  
\end{aligned}
$$
 Summing up, this shows that the contribution of $\varphi_{(j)}$ satisfies
\begin{equation*}
\label{estphij}
\begin{aligned}
\mfs_{\varphi_j} (t) = \eps \int_0^t \| \varphi_{(j)}(s) &  \|^2_{L^2}  ds \le   \eps C (m_1, K_0 )  \Big(  K_1^2 + \int_0^t \mfE_1 (s) {\rm d}s \Big)
 \\
 &+ 
 \eps C ( m_1, K_0)   \big( K^2_1+  \| Y \|^2_{L^\infty([0,T] \times \mathcal E)})
\big( 1  +  M t\big ) .   
\end{aligned}
\end{equation*}
  Adding up, we have obtained an estimate of $\mfs_j (t)$.  In $ \mfs_{\varphi_j}$   appears   a term 
  $$
  \eps   C (m_1, K_0 )   \int_0^t \mfE_1 (s) {\rm d}s 
  $$
  which is absorbed to the left by Gronwall's lemma. The other terms  are all controlled by the right hand side of \eqref{blop}. 
 \end{proof} 
 

\section{Proof of the main Theorem}
\label{final} 

We prove here Theorem \ref{theonew}. Using Proposition \ref{estIDfin} instead of Proposition \ref{estID}, one easily deduces Corollary \ref{coromain}.
We consider initial data $(\theta^{\rm in}, q^{\rm in}) \in \HH^n$, $ n \ge 5$,   satisfying for some $M>0$,
\begin{equation}\label{condini}
\begin{aligned}
& \jump{q_0^{\rm in}} = 0, \qquad M^{-1} \le 1 + \eps c' (\theta^{\rm in}_0) \le M, 
\\
 &  \| \theta_0^{\rm in} \|_{H^{n+1} (\mathcal E)}  \le M ,
 \quad \|  ( q_0^{\rm in}, \delta \dx q_0^{\rm in} )  \|_{H^{n+1} (\mathcal E)}  \le M , 
\end{aligned}
\end{equation} 
and the compatibility condition~\eqref{cc1new} which we recall here for the reader's convenience,  
\begin{equation}
\label{cc1newfin} 
\begin{cases}
\big\lvert \jump{q_{j+1}^{\rm in}} \big\rvert  \le M \delta^{n-j-1/2} \\
\big\lvert  \alpha \av{q_{j+1}^{\rm in}} +\jump{\theta_{j}^{\rm in}} -\delta^2 \jump{\dx q_{j+1}^{\rm in}} \big\rvert   \le M \delta^{n-j-1/2} 
\end{cases}
 \qquad  \mathrm{for} \ \  0 \le j \le n -1.
\end{equation}
   By Proposition \ref{prop1}, we know that the solution $U $ belongs to $   C^\infty ([0, T^* [ ; H^{n+1}(\mathcal E) \times H^{n+2}(\mathcal E))$ on a maximal  interval of time $[0, T^* )$, but with $T^* > 0$, possibly small. The following proposition shows that this maximal interval of time is at least of order $O\big((\eps+\delta^2)^{-1}\big)$.

\begin{proposition}\label{estim}
Let $n\geq 5$. Given a constant  $M>0$, and an initial data $(\theta^{\rm in},q^{\rm in})\in \HH^n= H^{n+1} (\mathcal E)\times H^{n+2} (\mathcal E)$ satisfying \eqref{condini} and \eqref{cc1newfin}, there is $\tau = \tau (M)$ such that 
$T^*  \ge  T_* =  \tau / (\eps + \delta^2)$. Moreover, there are constants 
$\underline{M}_k $ ($k=1,2,3$) such that  for 
$0 \le t \le  T_* $ one has
\begin{align}
\mfE (t) \le \underline{M}_1, \quad \mfE_1 (t) \le \underline{M}_2, 
    \quad \| U(t), \dx U(t)\|_{L^\infty(\mathcal E)}   \le \underline{M}_3. 
\end{align}
where $\mfe$, $\mfE_1$ are respectively given by \eqref{mfE} and \eqref{mfE1}.

\end{proposition}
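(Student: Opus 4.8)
The plan is to turn the \emph{a priori} estimates of Sections~\ref{ID}--\ref{dxD} into a control of the blow-up quantity in \eqref{maxT}, via a continuation (bootstrap) argument run on $[0,T_*]$ with $T_*=\tau/(\eps+\delta^2)$. First I would record that \emph{every quantity at $t=0$ is controlled by $M$ alone}: by Propositions~\ref{estID} and~\ref{addc} the norms $\|(\theta_j^{\rm in},q_j^{\rm in},\delta\dx q_j^{\rm in})\|_{H^{n+1-j}(\cE)}$ and the averages $|\av{q_j^{\rm in}}|$ are bounded for $j\le n+1$ by a constant depending only on $M$, so that with the one-dimensional embedding $H^1(\cE)\hookrightarrow L^\infty(\cE)$ and the definitions \eqref{mfE}, \eqref{mfE1}, \eqref{M1} one gets $\mfE(0)+\mfE_1(0)+K_0+K_1+\|U^{\rm in},\dx U^{\rm in}\|_{L^\infty}^2\le C_*$ with $C_*=C_*(M)$. (For Corollary~\ref{coromain} one invokes Proposition~\ref{estIDfin} here instead.) The hypothesis $n\ge 5$ enters precisely so that $\mfE_1$, which only reaches time-order $n-3\ge 2$, still controls $\dt\theta$ and $\dt^2\theta$, while Lemma~\ref{lemdxq} reaches $\dt^4 q$.

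Next I would set up the continuity argument. Fix constants $\underline M_1,\underline M_2,\underline M_3$, to be determined as functions of $M$, and let $T_{**}$ be the supremum of $T\in[0,\min(T^*,T_*)]$ on which $\mfE\le\underline M_1$, $\mfE_1\le\underline M_2$ and $\|U,\dx U\|_{L^\infty}^2\le\underline M_3$ all hold; since these are strict at $t=0$ once $\underline M_i>C_*$, continuity gives $T_{**}>0$. On $[0,T_{**}]$ the bootstrap bounds together with the equation $\dt\theta=-\dx q/(1+\eps c'(\theta))$ and $H^1\hookrightarrow L^\infty$ furnish the structural constants $c_0,C_0$ and $m_1$ required in \eqref{bound0}--\eqref{bounde}, and Lemma~\ref{lemdxq} gives $\|\dt^l q\|_{H^1}\le C$ for $l\le n$, hence $\|\dt^l q\|_{L^\infty}\le C$ for $l\le 4$. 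Because $t\le T_*$, every factor $(\eps+\delta^2)t$, $\eps t$ or $\delta^2 t$ is $\le\tau$, which is the sole source of smallness. Proposition~\ref{Estdtj} then yields $\mfE(t)\le e^{\eps\gamma t}(C\mfE(0)+\eps\gamma t)\le e^{\gamma\tau}(CC_*+\gamma\tau)$, so fixing $\underline M_1=4CC_*$ and taking $\tau$ small relative to $\gamma$ (which depends only on $M$ once the $\underline M_i$ are fixed) forces $\mfE(t)\le\tfrac12\underline M_1$.

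The delicate point, and the main obstacle, is the remaining pair $(\mfE_1,\|Y\|_{L^\infty})$ with $Y=\dx\theta$: Lemma~\ref{Yinfty} bounds $\|Y\|_{L^\infty}^2$ by the data plus $\mfm(t)$, while Lemma~\ref{YL2} bounds $\mfE_1(t)$ by the data plus a multiple of $1+\|Y\|_{L^\infty}^2$, and since $\mfm$ contains $\|\dt^2\theta\|_{L^\infty}^2$ it feeds $\mfE_1$ back into the first estimate, a genuine two-way coupling with, a priori, no small coefficient. I would break it by interpolation: using $\|u\|_{L^\infty}^2\le C\|u\|_{L^2}\|u\|_{H^1}$ one writes $\|\dt^2\theta\|_{L^\infty}^2\le C\eta^{-1}\mfE+C\eta\,\mfE_1$ for arbitrary $\eta>0$; since $\mfE$ is \emph{already} controlled by $\tfrac12\underline M_1$, the self-coupling coefficient of $\mfE_1$ becomes $C\eta M_1$, which is $<\tfrac12$ once $\eta$ is small, so Young's inequality closes the loop and bounds $\mfE_1$ and $\|Y\|_{L^\infty}^2$ by constants depending only on $M$, thereby fixing $\underline M_2,\underline M_3$. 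With the $L^\infty$ bounds on $\dx\theta,\dx q$ coming from $\mfE,\mfE_1$ and Lemma~\ref{lemdxq}, all three bootstrap bounds are improved to strict inequalities on $[0,T_{**}]$; continuity then forbids $T_{**}<\min(T^*,T_*)$, and since $\theta,q,\dx q$ and $1/(1+\eps c'(\theta))$ stay bounded there, \eqref{maxT} excludes $T^*\le T_*$. Hence $T^*>T_*$ and the asserted bounds hold on $[0,T_*]$, which proves the proposition and, with it, Theorem~\ref{theonew}.
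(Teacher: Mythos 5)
Your proposal is correct and follows essentially the same route as the paper: control of all data at $t=0$ by $M$ via Propositions \ref{estID} and \ref{addc}, a continuity/bootstrap argument on $[0,\tau/(\eps+\delta^2)]$ feeding Proposition \ref{Estdtj}, Lemmas \ref{lemdxq}, \ref{Yinfty} and \ref{YL2}, and the decisive step of breaking the $\mfE_1$--$\|\dx\theta\|_{L^\infty}$ coupling through the interpolation $\|\dt^l\theta\|_{L^\infty}^2\le C\,\mfE^{1/2}\mfE_1^{1/2}$ and Young's inequality with a small parameter, before invoking the blow-up criterion \eqref{maxT}. The paper phrases the continuation slightly differently (choosing $\um=2M_7$ after constructing constants $M_1,\dots,M_7$ that are independent of $\um$, rather than defining $T_{**}$ as a supremum), but the argument is the same.
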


\begin{proof}  We proceed in several steps. 

\medskip

\noindent -- {\bf a) } Introduce  for $t < T^*  $ 
\begin{equation}
m (t)  =  \| \theta, \dt \theta,\delta\dt^2 \theta, q, \dt q \|_{L^\infty ([0, t ] \times \mathcal E)} . 
\end{equation}

\begin{lemma}
There is $\tau_1 = \tau_1 (M, \underline{m})$ such that if $t < T^*$, $ \eps  t \le \tau_1  $ and $m(t) \le \underline{m}$, then 
$$
\frac{1}{2}  M^{-1} \le 1 + \eps c' (\theta) \le  2  M \quad  {on } \  [0, t ] \times \cE. 
$$
\end{lemma}  
\begin{proof} By Taylor expansion, there is $C(\um)$ such that 
$$
| c' (\theta)  - c' (\theta^{\rm in} ) |  \le  C (\um) t . 
$$
\end{proof}

\medskip

\noindent -- {\bf b) } Consider the energy $\mfE(t)$ defined in \eqref{mfE}. 
By Proposition~\ref{estIDfin} and Proposition \ref{addc}, we know that there is a constant 
$M_1 (M) $ such that the initial energy  satisfies 
\begin{equation}
\label{b1}
\max\{K_0,\mfE(0)\}  \le M_1. 
\end{equation} 
where $K_0$ and $\mfE$ are respectively given by \eqref{M1} and \eqref{mfE}.
 Proposition~\ref{Estdtj} therefore
implies the following estimate.

\begin{lemma}
There are $M_2 = M_2(M) $ and  $\tau_2 = \tau_2 (M, \um)\le \tau_1 $ such that if $t < T^*$, $ \eps  t \le \tau_2  $ and $m(t) \le \um$, then 
\begin{equation}
\label{b2}
\sup_{0 \le s \le t} \mfE(s )  \le    M_2  
\end{equation} 
\end{lemma} 

The important point is that $\tau_2$ may depend on $\um$, but not $M_2$.

\medskip

\noindent -- {\bf c) } Introduce 
$$
\mfm (t)  =   \sum_{l \le 2} \| \dt^l \theta \|^2_{L^\infty([0, t] \times \cE)}  + 
\sum_{l \le 4} \| \dt^l  q \|^2_{L^\infty ([0, t] \times \cE)} .
$$

We apply Lemma~\ref{Yinfty} noticing that by Proposition \ref{estIDfin}, $\| \dx \theta (0),\delta \dt\dx \theta(0) \|_{L^\infty} $ is controlled by $M_1$ (choosing a larger $M_1$ if necessary in \eqref{b1}).
\begin{lemma}
There are $C = C(M)$, $M_3 = M_3(M) $ and  
$\tau_3 = \tau_3 (M, \um )\le \tau_2  $ such that if $t < T^*$, $ (\eps + \delta^2)   t \le \tau_3  $ and $m (t) \le \um$, then 
\begin{equation}
\label{b3}
  \| \dx\theta (t)  \|^2_{L^\infty(\cE) }  \le       M_3   +  C (1 + \tau_3)   \mfm (t) . 
  \end{equation} 
\end{lemma}
 
\medskip

\noindent --  {\bf d)}  We now apply Lemma~\ref{YL2} to bound the energy $\mfE_1$ defined at \eqref{mfE1}. The initial value 
$\mfE_1 (0) $, as well as the constant $K_1$ defined in \eqref{M1}  are controlled by $M_1$  (choosing a larger $M_1$ if necessary in \eqref{b1}). Using the bound \eqref{b2} and demanding for instance that 
$$
(1 + ( \eps + \delta^2) t \,c(\um)) M_2 \le  2M_2
$$
one obtains that there are  $M_4 = M_4(M)$ and $\tau_4  = \tau_4 (M, \um )$
such that  if $(\eps + \delta^2 )t \le \tau_4$, 
\begin{equation*}
\mfE_1(t)  \le   M_4  (1 + \| \dx \theta  \|_{L^\infty([0, t] \times \cE)}^2 )   . 
\end{equation*}
Combining with \eqref{b3},  this implies the following lemma. 
\begin{lemma}
There are  $M_4 = M_4(M) $ and  
$\tau_4 = \tau_4 (M, \um)\le \tau_3  $ such that if $t < T^*$, $ (\eps + \delta^2)   t \le \tau_4  $ and $m(t) \le \um$, then 
\begin{equation}
\label{711}
\mfE_1(t)  \le   M_4  \big(1 +   (1 + \tau_3) \mfm (t)  \big)   . 
\end{equation}

\end{lemma} 

\medskip

\noindent -- {\bf e) }  We note now that $\mfm (t)$ is controlled by $\mfE_1(t)$ and $M_2$. Indeed, 
Lemma~\ref{lemdxq} and the Sobolev imbedding $H^1(\cE)  \subset L^\infty (\cE)$ imply that
\begin{equation*}
\sum_{l \le 4} \| \dt^l  q \|^2_{L^\infty ([0, t] \times \cE)}  \le  8 M_2 + C(M_1) + 
C(\um, M_1) \eps^2 t^2 M_2 \le   M_5 (M)
\end{equation*}
if $\eps t \le \tau_5$ and  $\tau_5 (M, \um)$ is small enough. 
We now use the interpolation estimate 
\begin{equation*}
\|  \psi \|_{L^\infty(\cE)}^2 \le C \| \psi \|_{L^2(\cE)}  \|  \dx \psi  \|_{L^2(\cE)}, 
\end{equation*}
which applied to $\dt^l \theta$ for  $l \le 2 \le n-3   $, implies that 
\begin{equation*}
\sum_{l \le 2} \| \dt^l  \theta  \|^2_{L^\infty ([0, t] \times \cE)}  \le   C \mfE (t)^{1/2}   \mfE_1(t) ^{1/2} . 
\end{equation*}
Hence we have proved that 
\begin{equation*}
\mfm (t)  \le   M_5  + \kappa \mfE_1 (t)  + \frac{C}{\kappa} M_2 . 
\end{equation*}
Inserting this estimate in \eqref{711} and choosing $\kappa=\kappa (M_4)$ small, we have proved the following result.
\begin{lemma}
If $n \ge 5$, there are constants $M_6 =  M_6(M) $ and  $\tau_6 = \tau_6 (M, \um)\le \tau_5  $
 such that if $t < T^*$, $ (\eps + \delta^2) t \le \tau_6  $ and $m (t) \le \um$,
\begin{equation}
\label{b4} 
\sup_{0 \le s \le t} \mfE_1(t)  \le       M_6. 
\end{equation}
\end{lemma} 
Again, the important point is that $\tau_6$ may depend on $\um$, but not $M_6$.

\medskip

\noindent -- {\bf f) } 
To close the loop, we note that Sobolev's imbedding theorem  implies that  if $n \ge 5$ 
\begin{equation}
\label{sobol}
m (t) \le   C_S \sup_{0 \le s \le t}  \big( \mfE_1(s)+ \mfE_1(s)\big) \le C_S (M_2+M_6) = M_7. 
\end{equation}

\medskip

\noindent -- {\bf g) }{\sl End of the proof.}  In the previous steps,  we have constructed constants 
$M_1, \ldots, M_7$ which depend only on $M$ given in the assumptions. 
We note also that 
$$
m(0)   \le M_1 . 
$$
Increasing $M_7$ if necessary, we can assume that $M_7 > M_1$ and we  now  \emph{choose} 
$\um = \um(M)  $ such that 
$$
 M_7 < \um , 
$$
For instance we can choose $\um = 2 M_7$. 
For this $\um$, there are $\tau_7  \le \ldots \leq \tau_1$ such that the estimates \eqref{b1} \ldots \eqref{sobol} are satisfied 
for $t \le \min\{ T^*, \tau_7 / (\eps + \delta^2)\}$, as long as $m(t)  \le \um$, and then, by \eqref{sobol}
$m (t ) \le M_7 < \um$. Note that, given the choice of $\um$, 
$\tau = \tau_7 (M, \um)$ is a function $\tau (M)$. \\
Since $m (0 ) \le M_1 \le M_7 < \um$, this implies by continuity of 
$m(t)$ that 
\begin{equation}
m(t) \le \um \qquad \mathrm{ for \ all\ }  \ t  \le \min \{ T^*, \tau  / (\eps + \delta^2)\}. 
\end{equation}
In particular the estimates \eqref{b1} \ldots \eqref{b4} are satisfied if $  t  \le \min \{ T^*, \tau / (\eps + \delta^2)\}$. 
By Lemma~\ref{lemdxq}, we see that $\| \dx q \|_{L^\infty} $ is bounded on this interval. 
Moreover, the $H^1$-norm of  $q$, and thus its $L^\infty$ norm, is bounded by 
$\mfE (t)$ and $\mfE_1 (t)$. Therefore, there is 
$M_8 (M)$ such that for $  t  \le \min \{ T^*, \tau / (\eps + \delta^2)\}$, 
\begin{equation}
 \big\| \theta (t)  ,  q (t) , \dx q (t) ,  1/ (1+ \eps c'(\theta(t) ) )\big\|_{L^\infty(\mathcal E)} \le M_8 .
\end{equation} 
Therefore, the blow-up criterion of  Proposition~\ref{prop1} implies that 
$T^* >  \tau / (\eps + \delta^2)$ and  the proof of the Proposition~\ref{estim} is now complete.
\end{proof}
Theorem \ref{theonew} is then a direct consequence of Proposition \ref{prop1}.


\end{document}